\theoremstyle{plain}
\newtheorem{theorem}{Theorem}[section]
\newtheorem{lemma}[theorem]{Lemma}
\newtheorem{proposition}[theorem]{Proposition}
\newtheorem{corollary}[theorem]{Corollary}
\theoremstyle{remark}
\newtheorem{remark}{Remark}
\newtheorem{exmp}[remark]{Example}
\numberwithin{equation}{section}
\newcommand{\C}{\mathbb{C}}
\newcommand{\D}{\mathbb{D}}
\newcommand{\N}{\mathbb{N}}
\newcommand{\T}{\mathbb{T}}
\newcommand{\Z}{\mathbb{Z}}
\newcommand{\abs}[1]{\lvert#1\rvert}
\newcommand{\nor}[1]{\lVert#1\rVert}
\newcommand{\mcB}{\mathcal{B}}
\newcommand{\mcF}{\mathcal{F}}
\newcommand{\mrmT}{\mathrm{T}}
\DeclareMathOperator{\cspn}{\overline{\mathrm{sp}}}
\DeclareMathOperator{\Tr}{Tr}
\title[\null]{Baxter's inequality for finite predictor coefficients of multivariate
long-memory stationary processes}
\author[\null]{AKIHIKO INOUE, YUKIO KASAHARA and MOHSEN POURAHMADI}
\address{A.\ Inoue\\
Department of Mathematics\\
Hiroshima University\\
Higashi-Hiroshima 739-8526\\
Japan}
\email{inoue100@hiroshima-u.ac.jp}
\address{Y.\ Kasahara\\
Department of Mathematics \\
Hokkaido University \\
Sapporo 060-0810 \\
Japan}
\email{y-kasa@math.sci.hokudai.ac.jp}
\address{M.\ Pourahmadi\\
Department of Statistics\\
Texas A\&M University\\
College Station, TX 77843\\
USA}
\email{pourahm@stat.tamu.edu}
\begin{document}

\subjclass[2010]{Primary 60G25; secondary 62M20, 62M10.}

\keywords{Baxter's inequality, predictor coefficients, multivariate stationary processes,
long memory, partial autocorrelation functions, phase functions.}


\begin{abstract}
For a multivariate stationary process, we develop explicit representations
for the finite predictor coefficient matrices, the finite
prediction error covariance matrices and the partial autocorrelation function (PACF)
in terms of the Fourier coefficients of its phase function in the spectral domain.
The derivation is based on a novel alternating projection technique and the use of
the forward and backward innovations corresponding to  predictions based on the
infinite past and future, respectively.
We show that such representations are ideal for studying the rates of convergence of the finite
predictor coefficients, prediction error covariances, and the PACF as well as for proving a multivariate
version of Baxter's inequality for a multivariate FARIMA process
with a common fractional differencing order for all components of the process.
\end{abstract}

\maketitle

\section{Introduction}\label{sec:1}

Baxter's inequality in \cite{Bax} provides valuable information about the  convergence of
the finite predictor coefficients to their infinite past counterparts (autoregressive coefficients)
of a short-memory univariate stationary process.
It has been used by \cite{Ber} in proving the consistency of the autoregressive
model fitting process and the corresponding autoregressive spectral density estimator,
and in proving the validity of autoregressive sieve bootstrap
for a stationary time series in \cite{Bu1, Bu2, KPP}. Due to the widespread applicability
of Baxter's inequality in these areas and others, there has been a great deal of activities
in extending it to the setups of multivariate stationary processes in
\cite{HD,CP},
random fields in \cite{MJK}, and rectangular arrays in \cite{MMP}.
In these extensions, the \textit{boundedness}\/ of the spectral density function of
the underlying process appears to be an absolutely essential and indispensable
part of proving Baxter's inequality.

In \cite{IK2}, however, Baxter's inequality was established for univariate
long-memory processes where the boundedness of the spectral density function is clearly violated.
Unlike the classical proofs for short-memory processes involving the orthogonal polynomials or
the Durbin--Levinson algorithm,
the key ingredient of the proof in \cite{IK2} was an
explicit representation of the finite predictor coefficients in terms of
the autoregressive (AR) and moving average (MA) coefficients.
The derivation of the representation in turn was based on techniques that use von Neumann's
alternating projections on the infinite past and future. These techniques were first used by
\cite{I1} and have been developed 
to derive the needed representations for
the finite prediction error variances (\cite{I1, I2, IK1, I3}),
the partial autocorrelation
functions (\cite{I3, BIK, KB}),
and the finite predictor coefficients (\cite{IK2}).
Unfortunately, most of the details of the proofs in the univariate case
do not carry over to the multivariate setup where, for example, all functions and the
sequences of AR and MA coefficients are matrix-valued and hence in general
do not commute with each other.

In this paper, for a multivariate stationary process, we prove the desired explicit representations
for the finite predictor coefficients, the finite prediction error covariances and the
partial autocorrelation function (PACF). 
See Theorems \ref{thm:pac947}--\ref{thm:fpc548} in Section \ref{sec:5}. 
The three new ingredients that enable us to obtain the results in the multivariate framework are:
\begin{itemize}
\item[(i)] Use of the Fourier coefficients of the matrix-valued phase function
of the process in the spectral domain, rather than the AR and MA coefficient matrices 
(see Section \ref{sec:4}).
\item[(ii)] Development of an enhanced alternating projection technique tailored to
the specific needs of the problem at hand (see Section \ref{sec:3}). 
\item[(iii)] Use of the forward and backward innovation processes corresponding to
the predictions based on the infinite past and future, respectively 
(see Sections \ref{sec:2}, \ref{sec:4} and \ref{sec:5}).
\end{itemize}

Our representation theorems make it possible to extend Baxter's inequality and other univariate
asymptotic results to the multivariate long-memory processes.
Even when specialized to univariate processes,
our method and results are more succinct, transparent and improve the known univariate results in
several ways. For example, our representation theorem for the finite predictor coefficients,
i.e., Theorem \ref{thm:fpc548} below, is stated under the minimality condition
(see (M) in Section \ref{sec:5}) only,
which is weaker than the condition
in the corresponding univariate result, i.e., Theorem 2.9 in \cite{IK2}.

In this paper, when applying the representation theorems,
we restrict our attention
to a class of $q$-variate long-memory processes,
that is, the $q$-variate \textit{FARIMA}\/ (fractional autoregressive integrated moving-average) or
\textit{vector ARFIMA}\/
processes with common fractional differencing order for all components. A process $\{X_k\}$ in this class
has the spectral density $w$ of the form
\begin{equation}
w(e^{i\theta})=|1-e^{i\theta}|^{-2d}g(e^{i\theta})g(e^{i\theta})^*,
\label{eq:farima529}
\end{equation}
where $d\in (-1/2,1/2)\setminus \{0\}$ and $g:\T\to \C^{q\times q}$ has rational entries
satisfying some suitable conditions; see (F) in Section \ref{sec:6}.
The process $\{X_k\}$ is described by the equation
\begin{equation}
(1-L)^d X_k=g(L)\xi_k, \qquad k\in\Z,
\label{eq:farima-eq723}
\end{equation}
where $L$ is the lag operator defined by $LX_{m}=X_{m-1}$ and
$\{\xi_k\}$ is a $q$-variate white noise, i.e., a $q$-variate, centered process
such that $E[\xi_n\xi_m^*]=\delta_{nm}I_q$ with $I_q$ being the $q\times q$ unit matrix.
See, e.g., \cite{Chung}. We notice that the parameter $d$ in (\ref{eq:farima529}) is the
{\it fractional differencing degree}\/ in (\ref{eq:farima-eq723}).
The $q$-variate FARIMA processes are multivariate analogues of univariate ones
introduced independently by \cite{GJ} and \cite{Ho}.

We present the following quick
summary of the asymptotic results
obtained by applying our representation theorems to a $q$-variate FARIMA process $\{X_k\}$
with (\ref{eq:farima529}):
\begin{enumerate}
\item[(1)]Baxter's inequality for $\{X_k\}$ with $d\in (0,1/2)$
(see Theorem \ref{thm:Baxter02} below).
\item[(2)]The precise asymptotics for the finite prediction error covariances
$v_n$ and $\tilde{v}_n$ of $\{X_k\}$ with $d\in(-1/2,1/2) \setminus \{0\}$
(see Theorem \ref{thm:asymp752} below; see also Section \ref{sec:5} for
the definitions of $v_n$ and $\tilde{v}_n$).
\item[(3)]The precise asymptotic behavior for the PACF $\alpha_n$ of $\{X_k\}$ with
$d\in(-1/2,1/2)\setminus\{0\}$
(see Theorem \ref{thm:PACF431} below; see also Section \ref{sec:5} for
the definition of $\alpha_n$).
\end{enumerate}

First, Baxter's inequality for FARIMA processes  is of the form
\begin{equation}
\sum_{j=1}^{n}\nor{\phi_{n,j} - \phi_j}
\le K\sum_{j=n+1}^{\infty}\Vert \phi_j\Vert,\qquad
n\in\N,
\label{eq:Baxter597}
\end{equation}
for some positive constant $K$,
where, for $a\in\C^{q\times q}$, $\Vert a\Vert$ denotes the
spectral norm of $a$ (see Section \ref{sec:2}),
and $\phi_j$ and $\phi_{n,j}$
denote the forward
infinite and finite predictor coefficients, respectively, of $\{X_k\}$ (see Sections
\ref{sec:2} and \ref{sec:5}, respectively, for their precise definitions).
We also prove a backward analogue of (\ref{eq:Baxter597});
see Corollary \ref{cor:backward-baxter519} below.
We refer to \cite{IK2} for the corresponding result for univariate
long-memory processes and \cite{BK, PGM, RS} for its application; 
see also \cite{ICM} for other applications of results in \cite{IK2}. 
In \cite{CP}, Baxter's inequality (\ref{eq:Baxter597}) was proved
for a class of multivariate short-memory stationary processes.
The original inequality (\ref{eq:Baxter597}) of
Baxter \cite{Bax} was an assertion for univariate short-memory processes.
See also \cite{Ber} and \cite[Section 7.6.2]{Pou}.

Next, the asymptotic results in (2) above are of the form
\begin{align}
v_n = v_{\infty} + \frac{d^2}{n}v_{\infty} + O(n^{-2}),
\qquad
n\to\infty,
\label{eq:v-asymp842}\\
\tilde{v}_n = \tilde{v}_{\infty} + \frac{d^2}{n}\tilde{v}_{\infty} + O(n^{-2}),
\qquad
n\to\infty,
\label{eq:v-tilde-asymp842}
\end{align}
where $v_{\infty}$ (resp., $\tilde{v}_{\infty}$) is the forward (resp., backward) infinite
prediction error covariance of $\{X_k\}$; see Section \ref{subsec:6.3} for
their precise definitions.
We refer to \cite{I1, I2, IK1, I3} for
the corresponding results for univariate
long-memory processes. See also \cite{IS, Gi}
for related work.

Finally, the result in (3) is of  the form
\begin{equation}
\alpha_n = \frac{d}{n}V + O(n^{-2}),
\qquad
n\to\infty,
\label{eq:PACF741}
\end{equation}
where $V$ is a unitary matrix in $\C^{q\times q}$ which depends only on $g$ (and not $d$).
We refer to \cite{I1, I2, IK1, I3, BIK}
for the corresponding results for univariate long-memory processes.
In the theory of orthogonal polynomials on the unit circle, the PACF appears as the
sequence of \textit{Verblunsky coefficients} and plays a central role.
See, e.g., \cite{DPS, Bin, KB}.

The above $q$-variate FARIMA process has a common fractional differencing order $d$ for all components.
The question arises of proving analogues of (1)--(3) above for more general $q$-variate FARIMA processes
which have, in general, different order of
differencing in each component, i.e.,
\[
(1-L)^{\mathbf{d}}:=
\left(
\begin{matrix}
(1 - L)^{d_1}   &        & 0             \cr
                & \ddots &               \cr
0               &        & (1 - L)^{d_q}
\end{matrix}
\right)
\]
with $\mathbf{d}=(d_1,\ldots,d_q)$, instead of $(1-L)^d$
(see, e.g., \cite{Chung}).
We leave this question open here; the difficulty stems from the fact that,
for such a general $q$-variate FARIMA process,
the matrices $g(L)$ and $(1-L)^{\mathbf{d}}$
do not commute with each other.

This paper is organized as follows.
In Section \ref{sec:2}, we give preliminary definitions and basic facts.
In Section \ref{sec:3}, we prove the key projection theorem.
In Section \ref{sec:4}, we describe some basic facts about the Fourier coefficients of
the phase function which is needed in Section \ref{sec:5}.
In Section \ref{sec:5}, we prove the main results, i.e.,
the representation theorems for the finite prediction error covariances,
the PACF and the finite predictor coefficients of multivariate stationary processes.
In Section \ref{sec:6}, we apply the main results to multivariate FARIMA processes
with common fractional differencing order for all components,
and establish the results (1)--(3) above for them.


\section{Preliminaries}\label{sec:2}

Let $\C^{m\times n}$ be the set of all complex $m\times n$ matrices;
we write $\C^q$ for $\C^{q\times 1}$. 
We write $I_n$ for the $n\times n$ unit matrix.
For $a\in \C^{m\times n}$, $a^{\mrmT}$ denotes the transpose of $a$, and
$\bar{a}$ and
$a^*$ the complex and Hermitian conjugates of $a$, respectively;
thus, in particular, $a^*:=\bar{a}^{\mrmT}$.
For $a\in\C^{q\times q}$, we write $\Vert a\Vert$ for the spectral norm of $a$:
\[
\Vert a\Vert:=\sup_{u\in \C^q, \vert u\vert = 1}\vert au\vert.
\]
Here $\vert u\vert:=(\sum_{i=1}^q\vert u^i\vert^2)^{1/2}$ denotes the Euclidean norm of
$u=(u^1,\dots,u^q)^{\mrmT}\in \C^q$. A Hermitian matrix
$a\in\C^{q\times q}$ is said to be \textit{positive}, denoted as $a\ge 0$,
if $(au)^*u\ge 0$ for all $u\in \C^q$.
When $a\ge 0$, we have
$\Vert a\Vert=\sup_{u\in \C^q, \vert u\vert = 1} (au)^*u$.
For Hermitian matrices $a, b\in\C^{q\times q}$, we
write $a\ge b$ if $a - b\ge 0$. If $a\ge b$, then we have $\Vert a\Vert \ge \Vert b\Vert$.
For $p\in [1,\infty)$ and $K\subset \Z$, $\ell_p^{q\times q}(K)$ denotes the space of
$\C^{q\times q}$-valued sequences
$\{a_k\}_{k\in K}$ such that $\sum_{k\in K}\Vert a_k\Vert^p<\infty$.
We write $\ell_{p+}^{q\times q}$ for $\ell_p^{q\times q}(\N\cup\{0\})$
and $\ell_{p+}$ for $\ell_{p+}^{1\times 1}=\ell_p^{1\times 1}(\N\cup\{0\})$.

Let $\T:=\{z\in\C :\vert z\vert=1\}$ be the unit circle in $\C$.
We write $\sigma$ for the normalized Lebesgue measure $d\theta/(2\pi)$ on
$([-\pi,\pi), \mcB([-\pi,\pi)))$, where $\mcB([-\pi,\pi))$ is the Borel $\sigma$-algebra
of $[-\pi,\pi)$; thus we have $\sigma([-\pi,\pi))=1$.
For $p\in [1,\infty)$, we write $L_p(\T)$ for the Lebesgue space of measurable functions $f:\T\to\C$
such that $\Vert f\Vert_p<\infty$, where
$\Vert f\Vert_p:=\{\int_{-\pi}^{\pi}\vert f(e^{i\theta})\vert^p\sigma(d\theta)\}^{1/p}$.
Let $L_p^{m\times n}(\T)$ be the space of $\C^{m\times n}$-valued functions on
$\T$ whose entries belong to $L_p(\T)$.

The Hardy class $H_2(\T)$ on $\T$ is the closed subspace of
$L_2(\T)$ consisting of $f\in L_2(\T)$ such that
$\int_{-\pi}^{\pi}e^{im\theta}f(e^{i\theta})\sigma(d\theta)=0$ for $m=1,2,\dots$.
Let $H_2^{m\times n}(\T)$ be the space of $\C^{m\times n}$-valued functions on
$\T$ whose entries belong to $H_2(\T)$.
Let $\D:=\{z\in\C : \vert z\vert<1\}$ be the open unit disk in $\C$.
We write $H_2(\D)$ for the Hardy class on $\D$, consisting of
holomorphic functions $f$ on $\D$ such that
$\sup_{r\in [0,1)}\int_{-\pi}^{\pi}\vert f(re^{i\theta})\vert^2\sigma(d\theta)<\infty$.
As usual, we identify each function $f$ in $H_2(\D)$ with its boundary function
$f(e^{i\theta}):=\lim_{r\uparrow 1}f(re^{i\theta})$, $\sigma$-a.e.,
in $H_2(\T)$.
A function $h$ in $H_2^{n\times n}(\T)$ is called \textit{outer}\/ if $\det h$ is a
$\C$-valued outer function, that is, $\det h$ satisfies
$\log\vert \det h(0)\vert
=\int_{-\pi}^{\pi}\log\vert \det h(e^{i\theta})\vert \sigma(d\theta)$
(cf.\ \cite[Definition 3.1]{KK}).

For $q\in\N$,
let $\{X_k\}=\{X_k:k\in\Z\}$ be a $\C^q$-valued, centered,
weakly stationary process, defined on a probability space $(\Omega, \mcF, P)$, which
we shall simply call a \textit{$q$-variate stationary process}. 
Write $X_k=(X^1_k,\dots,X^q_k)^{\mrmT}$, and
let $M$ be the complex Hilbert space
spanned by all the entries $\{X^j_k: k\in\Z,\ j=1,\dots,q\}$ in $L^2(\Omega, \mcF, P)$,
which has inner product $(x, y)_{M}:=E[x\overline{y}]$ and
norm $\Vert x\Vert_{M}:=(x,x)_{M}^{1/2}$.
For $K\subset \Z$ such as $\{n\}$,
$(-\infty,n]:=\{n,n-1,\dots\}$, $[n,\infty):=\{n,n+1,\dots\}$,
and $[m,n]:=\{m,\dots,n\}$ with $m\le n$,
we define the closed subspace $M_K^X$ of $M$ by
\[
M_K^X:=\cspn \{X^j_k: j=1,\dots,q,\ k\in K\}.
\]
We write $(M_K^X)^{\bot}$ for the orthogonal complement of $M_K^X$ in $M$.
Let $P_K$ and $P_K^{\perp}$ be the orthogonal projection operators of $M$ onto
$M_K^X$ and $(M_K^X)^{\perp}$, respectively.

Let $M^q$ be the space of $\C^q$-valued random variables on
$(\Omega, \mcF, P)$ whose entries belong to $M$.
The norm $\Vert x\Vert_{M^q}$ of $x=(x^1,\dots,x^q)^{\mrmT}\in M^q$ is given by
$\Vert x\Vert_{M^q}:=(\sum_{i=1}^q \Vert x^i\Vert_M^2)^{1/2}$.
For $K\subset\mathbb{Z}$ and
$x=(x^1,\dots,x^q)^{\mrmT}\in M^q$, we write $P_Kx$ for $(P_Kx^1, \dots, P_Kx^q)^{\mrmT}$.
We define $P_K^{\perp}x$ in a similar way.
For $x=(x^1,\dots,x^q)^{\mrmT}$ and $y=(y^1,\dots,y^q)^{\mrmT}$ in $M^q$,
\[
\langle x,y\rangle:=E[xy^*]=
\left(
\begin{matrix}
(x^1, y^1)_M & (x^1, y^2)_M & \cdots & (x^1, y^q)_M\cr
(x^2, y^1)_M & (x^2, y^2)_M & \cdots & (x^2, y^q)_M\cr
\vdots       & \vdots       & \ddots & \vdots      \cr
(x^q, y^1)_M & (x^q, y^2)_M & \cdots & (x^q, y^q)_M
\end{matrix}
\right)
\in \C^{q\times q}
\]
stands for the Gram matrix of $x$ and $y$.

Let $\{X_k\}$ be a $q$-variate stationary process.
If there exists a positive $q\times q$ Hermitian matrix-valued function $w$ on $\T$, satisfying
$w\in L^{q\times q}_1(\T)$ and
\[
\langle X_m, X_n\rangle = \int_{-\pi}^{\pi}e^{-i(m-n)\theta}w(e^{i\theta})\frac{d\theta}{2\pi},
\qquad n,m\in\Z,
\]
then we call $w$ the \textit{spectral density}\/ of $\{X_k\}$.
We say that $\{X_k\}$ is \textit{purely nondeterministic}\/ (PND) if
$\cap_{n\in\Z}M_{(-\infty,n]}^X=\{0\}$.
Every PND process $\{X_k\}$ has spectral density (cf.\ Section 4 in \cite[Chapter II]{Roz}).
We consider the following condition:
\[
\mbox{$\{X_k\}$ has spectral density $w$ such that $\log \det w\in L_1(\T)$.}
\tag{\rm A}
\]
A necessary and sufficient condition for (A) is that $\{X_k\}$ is PND and
its spectral density $w$ satisfies $\det w(e^{i\theta})>0$, $\sigma$-a.e. (see Theorem 6.1 in
\cite[Chapter II]{Roz}).

In what follows, we assume (A) for $\{X_k\}$.
Let $\{\tilde{X}_k: k\in\Z\}$ be the time-reversed process of $\{X_k\}$:
\begin{equation}
\tilde{X}_k:=X_{-k},\qquad k\in\Z.
\label{eq:t-reversed146}
\end{equation}
Then, since
\[
\langle \tilde{X}_n, \tilde{X}_m\rangle = \langle X_{-n}, X_{-m}\rangle
= \int_{-\pi}^{\pi}e^{-i(n-m)\theta}w(e^{-i\theta})\frac{d\theta}{2\pi},
\]
$\{\tilde{X}_k\}$ has the spectral density $\tilde{w}$ given by
\begin{equation}
\tilde{w}(e^{i\theta})=w(e^{-i\theta}).
\label{eq:spec628}
\end{equation}
In particular, $\{\tilde{X}_k\}$ also satisfies (A).
The spectral densities $w$ and $\tilde{w}$ have the
decompositions
\begin{equation}
w(e^{i\theta}) = h(e^{i\theta}) h(e^{i\theta})^*, \qquad
\tilde{w}(e^{i\theta})=\tilde{h}(e^{i\theta})\tilde{h}(e^{i\theta})^*,
\qquad \mbox{$\sigma$-a.e.},
\label{eq:decomp888}
\end{equation}
respectively,
for some outer functions $h$ and $\tilde{h}$ in $H_2^{q\times q}(\T)$, and
$h$ and $\tilde{h}$ are unique up to constant unitary factors
(see, e.g., \cite[Chapter II]{Roz} and \cite[Theorem 11]{HL}).
We define the outer function $h_{\sharp}$ in $H_2^{q\times q}(\T)$ by
\begin{equation}
h_{\sharp}(z) := \{\tilde{h}(\overline{z})\}^*.
\label{eq:outft628}
\end{equation}
Then, $h_{\sharp}$ satisfies 
\begin{equation}
w(e^{i\theta}) =h_{\sharp}(e^{i\theta})^*h_{\sharp}(e^{i\theta}),\quad
\mbox{$\sigma$-a.e.}
\label{eq:decomp111}
\end{equation}
We may take $h_{\sharp}=h$ for the univariate case $q=1$ but there is no such simple
relation between $h$ and $h_{\sharp}$ for $q\ge 2$. 
We call $h^*h_{\sharp}^{-1}$ the \textit{phase function} 
of $\{X_k\}$. 
Since
\[
\{h(e^{i\theta})^*h_{\sharp}(e^{i\theta})^{-1}\}^*h(e^{i\theta})^*h_{\sharp}(e^{i\theta})^{-1}
=\{h_{\sharp}(e^{i\theta})^*\}^{-1}w(e^{i\theta})h_{\sharp}(e^{i\theta})^{-1}=I_q
\]
holds $\sigma$-a.e., 
it is a unitary matrix valued function on $\T$. 
See Section \ref{sec:4} and \cite[page 428]{Pe}.

Let
\[
X_k=\int_{-\pi}^{\pi}e^{-ik\theta}\Lambda(d\theta),\qquad k\in\Z,
\]
be the spectral representation of $\{X_k\}$, where $\Lambda$ is the $\C^q$-valued
random spectral measure such that
\[
\left(
\int_{-\pi}^{\pi} \phi(e^{i\theta})   \Lambda(d\theta),
\int_{-\pi}^{\pi} \psi(e^{i\theta})   \Lambda(d\theta)
\right)_M
= \int_{^-\pi}^{\pi} \phi(e^{i\theta}) w(e^{i\theta}) \psi(e^{i\theta})^* \frac{d\theta}{2\pi}
\]
for $\phi, \psi\in L(w)$
with $L(w)$ being the class of measurable $\phi: \T \to \C^{1\times q}$ satisfying
$\int_{^-\pi}^{\pi} \phi(e^{i\theta}) w(e^{i\theta}) \phi(e^{i\theta})^* \sigma(d\theta)<\infty$
(cf.\ \cite[Chapter I]{Roz}).
We define a $q$-variate stationary process $\{\xi_k:k\in\Z\}$, called the \textit{forward innovation
process}\/ of $\{X_k\}$, by
\begin{equation}
\xi_k:=\int_{-\pi}^{\pi}e^{-ik\theta}h(e^{i\theta})^{-1}\Lambda(d\theta),\qquad k\in\Z.
\label{eq:xisp628}
\end{equation}
Then, $\{\xi_k\}$ satisfies $\langle \xi_n, \xi_m\rangle = \delta_{n m}I_q$ and
\begin{equation}
M_{(-\infty,n]}^X=M_{(-\infty,n]}^{\xi},\qquad n\in\Z
\label{eq:causal111}
\end{equation}
(cf.\ Section 4 in \cite[Chapter II]{Roz}), whence, for $n\in\Z$,
$\{\xi_k^j: j=1,\dots,q,\ k\ge n+1\}$ becomes a complete orthonormal basis of
$(M_{(-\infty,n]}^X)^{\bot}$.

On the other hand,
the spectral representation of $\{\tilde{X}_k\}$ is given by
\[
\tilde{X}_k=\int_{-\pi}^{\pi}e^{-ik\theta}\tilde{\Lambda}(d\theta),\qquad k\in\Z
\]
with the $\C^q$-valued random measure $\tilde{\Lambda}$ defined by
\begin{equation}
\tilde{\Lambda}(E):=\Lambda(-E),\qquad E\in \mcB((-\pi,\pi)),
\label{eq:randomsp628}
\end{equation}
where $-E:=\{-\theta: \theta\in E\}$. Let $\{\tilde{\xi}_k: k\in\Z\}$ be
the forward innovation process
of $\{\tilde{X}_k\}$ given by
\begin{equation}
\tilde{\xi}_k
:=\int_{-\pi}^{\pi}e^{-ik\theta}\tilde{h}(e^{i\theta})^{-1}
\tilde{\Lambda}(d\theta),\qquad k\in\Z.
\label{eq:etasp628}
\end{equation}
Then, we easily see that $\{\tilde{\xi}_k\}$ satisfies
$\langle \tilde{\xi}_n, \tilde{\xi}_m\rangle=\delta_{n m}I_q$ and
\begin{equation}
M_{[-n,\infty)}^X=M_{(-\infty, n]}^{\tilde{\xi}},\qquad n\in\Z,
\label{eq:causal222}
\end{equation}
whence, for $n\in\Z$,
$\{\tilde{\xi}_k^j: j=1,\dots,q,\ k\ge n+1\}$ becomes a complete orthonormal basis of
$(M_{[-n,\infty)}^X)^{\bot}$.
We also call $\{\tilde{\xi}_k\}$ the \textit{backward innovation process}\/ of $\{X_k\}$.
Then, $\{\xi_k\}$ turns out to be the backward innovation process of $\{\tilde{X}_k\}$.

We define, respectively, the \textit{forward MA and AR coefficients}\/ $c_k$ and $a_k$
of $\{X_k\}$ by
\begin{equation}
h(z)=\sum_{k=0}^{\infty}z^kc_k,\qquad
-h(z)^{-1}=\sum_{k=0}^{\infty}z^ka_k,\qquad z\in\D,
\label{eq:MAAR111}
\end{equation}
and the \textit{backward MA and AR coefficients}\/ $\tilde{c}_k$
and $\tilde{a}_k$ of $\{X_k\}$ by
\begin{equation}
\tilde{h}(z)=\sum_{k=0}^{\infty}z^k\tilde{c}_k,\qquad
-\tilde{h}(z)^{-1}=\sum_{k=0}^{\infty}z^k\tilde{a}_k,\qquad z\in\D.
\label{eq:MAAR222}
\end{equation}
It should be noticed that $c_k$ and $a_k$ (resp., $\tilde{c}_k$ and $\tilde{a}_k$)
are the backward (resp., forward) MA and AR coefficients of
the time-reversed process $\{\tilde{X}_k\}$, respectively.
All of $\{c_k\}$, $\{a_k\}$, $\{\tilde{c}_k\}$ and $\{\tilde{a}_k\}$ are $\C^{q\times q}$-valued
sequences,
and we have $\{c_k\}, \{\tilde{c}_k\}\in \ell_{2+}^{q\times q}$ and
$c_0a_0=\tilde{c}_0\tilde{a}_0=-I_q$.
We have the following forward and backward MA representations of $\{X_k\}$, respectively:
\begin{equation}
X_n=\sum_{k=-\infty}^nc_{n-k}\xi_k,\qquad
X_{-n}=\sum_{k=-\infty}^{n}\tilde{c}_{n-k}\tilde{\xi}_k,\qquad n\in\Z
\label{eq:MA428}
\end{equation}
(cf.\ Section 4 in \cite[Chapter II]{Roz}). If we further assume
\begin{equation}
\{a_k\}, \{\tilde{a}_k\}\in \ell_{1+}^{q\times q},
\label{eq:integrable432}
\end{equation}
then the following forward and backward AR representations of $\{X_k\}$, respectively,
also hold:
\begin{equation}
\sum_{k=-\infty}^na_{n-k}X_k + \xi_n = 0,\qquad
\sum_{k=-\infty}^{n}\tilde{a}_{n-k} X_{-k} + \tilde{\xi}_n = 0,\qquad n\in\Z
\label{eq:AR222}
\end{equation}
(see, e.g., the proof of \cite[Theorem 4.4]{I1}). From (\ref{eq:AR222}),
we obtain the following forward and backward infinite prediction formulas, respectively, for
$\{X_k\}$:
\[
P_{(-\infty, -1]}X_0 = \sum_{k=1}^{\infty}\phi_kX_{-k},\qquad
P_{[1, \infty)}X_0 = \sum_{k=1}^{\infty}\tilde{\phi}_kX_{k}.
\]
Here
\begin{equation}
\phi_k:=c_0a_k,\qquad \tilde{\phi}_{k}:=\tilde{c}_0\tilde{a}_k,\qquad k\in\N.
\label{eq:ipc574}
\end{equation}
We call $\phi_k$ (resp., $\tilde{\phi}_k$) the \textit{forward}\/ (resp., \textit{backward})
\textit{infinite predictor coefficients}\/ of $\{X_k\}$.
It should be noticed that $\phi_k$ (resp., $\tilde{\phi}_k$) are the backward
(resp., forward) infinite predictor coefficients of $\{\tilde{X}_k\}$.


\section{A Projection theorem}\label{sec:3}

In this section, we present a projection theorem which facilitates finding explicit
representations of the finite predictor coefficients, the finite prediction
error covariances and the PACF of a $q$-variate stationary process $\{X_k\}$, in terms of the
Fourier coefficients of the phase function.

Let $H$ be a Hilbert space with inner product $(\cdot, \cdot)$. Let $I:H\to H$ be the identity map.
For a closed subspace $A$ of $H$, we write $P_A$ for the orthogonal
projection operator of $H$ onto $A$ and $P_A^\perp$ for
that onto the orthogonal complement $A^{\perp}$ of $A$,
that is, $P_A^\perp=I - P_A$. For closed subspaces $A$ and $B$ of $H$,
von Neumann's Alternating Projection Theorem (cf.\ \cite[\S 9.6.3]{Pou}) states that
$(P_AP_B)^n$ converges to
$P_{A\cap B}$ as $n\to\infty$ in the strong operator topology.
From this, we have the following projection theorem.

\begin{theorem}[\cite{I1, I3}]\label{thm:poj01}
Let $A$ and $B$ be closed subspaces of $H$.
Then, we have, for $x,y\in H$,
\begin{align}
&P_{A\cap B}^\perp x= \sum_{k=0}^{\infty}
\left\{P_B^{\perp}(P_AP_B)^kx + P_A^{\perp}P_B(P_AP_B)^kx\right\},
\label{eq:PL111}\\
&
\begin{aligned}
&(P_{A\cap B}^\perp x, P_{A\cap B}^\perp y) = \sum_{k=0}^{\infty}
\left\{(P_B^{\perp}(P_AP_B)^kx, P_B^{\perp}(P_AP_B)^ky)\right. \\
&\qquad\qquad\qquad\qquad\quad
\left. + (P_A^{\perp}P_B(P_AP_B)^kx, P_A^{\perp}P_B(P_AP_B)^ky)\right\},
\end{aligned}
\label{eq:PL222}
\end{align}
the sum in $(\ref{eq:PL111})$ converging strongly.
\end{theorem}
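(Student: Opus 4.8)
The plan is to derive both identities from a single telescoping mechanism, using nothing beyond the facts that $P_A$ and $P_B$ are self-adjoint idempotents together with von Neumann's Alternating Projection Theorem quoted above; in particular I will not need any orthogonality among the individual summands.

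For $(\ref{eq:PL111})$ I would write $P_B^{\perp}=I-P_B$ and $P_A^{\perp}=I-P_A$, so that the $k$-th bracketed term collapses:
\begin{align*}
P_B^{\perp}(P_AP_B)^kx + P_A^{\perp}P_B(P_AP_B)^kx
&= \bigl[(P_AP_B)^kx - P_B(P_AP_B)^kx\bigr] + \bigl[P_B(P_AP_B)^kx - (P_AP_B)^{k+1}x\bigr] \\
&= (P_AP_B)^kx - (P_AP_B)^{k+1}x .
\end{align*}
Hence the partial sum over $k=0,\dots,N$ equals $x-(P_AP_B)^{N+1}x$. Since $(P_AP_B)^{N+1}x\to P_{A\cap B}x$ in $H$ by von Neumann's theorem, the partial sums converge strongly to $x-P_{A\cap B}x=P_{A\cap B}^{\perp}x$, which is $(\ref{eq:PL111})$.

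For $(\ref{eq:PL222})$ I would set $z_k:=(P_AP_B)^kx$ and $y_k:=P_Bz_k$, and define $z_k',y_k'$ analogously with $y$ in place of $x$; note that $P_Ay_k=(P_AP_B)^{k+1}x=z_{k+1}$. Using $P_B=P_B^{*}=P_B^2$ one gets $(z_k,P_Bz_k')=(P_Bz_k,P_Bz_k')=(y_k,y_k')$ and likewise $(P_Bz_k,z_k')=(y_k,y_k')$, hence
\[
(P_B^{\perp}z_k,\,P_B^{\perp}z_k') = (z_k,z_k') - (y_k,y_k').
\]
The identical computation with $P_A$ in place of $P_B$, applied to $y_k$, gives
\[
(P_A^{\perp}y_k,\,P_A^{\perp}y_k') = (y_k,y_k') - (z_{k+1},z_{k+1}').
\]
Adding these two, the $(y_k,y_k')$ terms cancel, so the $k$-th summand of $(\ref{eq:PL222})$ equals $(z_k,z_k')-(z_{k+1},z_{k+1}')$. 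Summing over $k=0,\dots,N$ telescopes to $(x,y)-((P_AP_B)^{N+1}x,(P_AP_B)^{N+1}y)$; letting $N\to\infty$ and using von Neumann's theorem together with joint continuity of the inner product (the iterates $(P_AP_B)^n$ are uniformly bounded by one) yields $(x,y)-(P_{A\cap B}x,P_{A\cap B}y)$, which equals $(P_{A\cap B}^{\perp}x,P_{A\cap B}^{\perp}y)$ because $P_{A\cap B}$ is a self-adjoint projection.

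I do not expect a serious obstacle. The one subtlety worth flagging is that the individual summands in $(\ref{eq:PL111})$ are in general \emph{not} mutually orthogonal, so $(\ref{eq:PL222})$ cannot simply be read off from $(\ref{eq:PL111})$ by expanding a squared norm and discarding cross terms. What makes it work is the exact telescoping identity $(P_B^{\perp}z_k,P_B^{\perp}z_k')+(P_A^{\perp}y_k,P_A^{\perp}y_k')=(z_k,z_k')-(z_{k+1},z_{k+1}')$, which shows the cross terms cancel in aggregate; verifying this identity cleanly — keeping straight which vectors lie in $A$, in $B$, or in the respective orthogonal complements — is the only place where a little care is required.
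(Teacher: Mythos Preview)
Your proof is correct. The paper does not give its own proof here---it simply records that (\ref{eq:PL222}) and (\ref{eq:PL111}) are abstract forms of results in \cite{I1,I3} and ``can be proved in a similar way''---so there is no detailed argument in the paper to compare against. Your telescoping mechanism (the $k$-th bracketed term of (\ref{eq:PL111}) collapsing to $(P_AP_B)^kx-(P_AP_B)^{k+1}x$, and the $k$-th summand of (\ref{eq:PL222}) to $(z_k,z_k')-(z_{k+1},z_{k+1}')$, followed by von~Neumann's theorem) is precisely the standard route and matches the spirit of those references. Your caveat that the summands in (\ref{eq:PL111}) need not be mutually orthogonal is also correct and worth recording: already for $H=\mathbb{R}^2$ with one-dimensional $A$ and $B$ meeting at an acute angle one has $(P_B^\perp x,\,P_A^\perp P_Bx)\ne 0$, so (\ref{eq:PL222}) genuinely requires the telescoping identity rather than a naive Parseval expansion of (\ref{eq:PL111}).
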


The assertion (\ref{eq:PL222}) (resp., (\ref{eq:PL111})) is an abstract form of
\cite[Theorem 4.1]{I1} and \cite[Theorem 3.1]{I3} (resp., Remarks to \cite[Theorem 3.1]{I3}), and
can be proved in a similar way.

For our applications in this paper, we need
the next variant.

\begin{theorem}\label{thm:proj02}
Let $A$ and $B$ be closed subspaces of $H$.
Then, we have
\begin{align}
&P_{A\cap B}^\perp a= \sum_{k=0}^{\infty}
\left\{P_B^{\perp}(P_A^{\perp}P_B^{\perp})^ka - (P_A^{\perp}P_B^{\perp})^{k+1}a\right\},
\quad a\in A,
\label{eq:PL333}\\
&(P_{A\cap B}^\perp a_1, P_{A\cap B}^\perp a_2)
= \sum_{k=0}^{\infty}(P_B^{\perp}(P_A^{\perp}P_B^{\perp})^ka_1, a_2),
\quad a_1, a_2\in A,
\label{eq:PL444}\\
&(P_{A\cap B}^\perp a, P_{A\cap B}^\perp b)= -\sum_{k=0}^{\infty}((P_A^{\perp}P_B^{\perp})^{k+1}a, b),
\quad a\in A,\ b\in B,
\label{eq:PL555}
\end{align}
the sum in $(\ref{eq:PL333})$ converging strongly.
\end{theorem}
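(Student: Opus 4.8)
The plan is to prove (\ref{eq:PL333}) first and then deduce (\ref{eq:PL444}) and (\ref{eq:PL555}) from it by pairing against elements of $A$ and of $B$. Write $T:=P_A^\perp P_B^\perp$ and fix $a\in A$. Denote the $k$-th summand of (\ref{eq:PL333}) by $c_k:=P_B^\perp T^k a-T^{k+1}a$; since $I=P_A+P_A^\perp$ and $T^{k+1}a=P_A^\perp P_B^\perp T^k a$, we have $c_k=P_AP_B^\perp T^k a$.

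The heart of the matter is the identity
\[
c_k=P_A\bigl\{(P_BP_A)^k a-(P_BP_A)^{k+1}a\bigr\},\qquad k\ge0,
\]
which, together with $P_Aa=a$, makes the partial sums of (\ref{eq:PL333}) telescope to $\sum_{k=0}^{N}c_k=a-P_A(P_BP_A)^{N+1}a$. To establish it I would first prove, by induction on $k$, the auxiliary relation $T^k a=-P_A^\perp(P_BP_A)^k a$ for $k\ge1$: the case $k=1$ reads $P_A^\perp P_B^\perp a=-P_A^\perp P_B a=-P_A^\perp P_BP_A a$, and in the inductive step one uses that $z:=(P_BP_A)^k a$ lies in $B$, so that $P_B^\perp P_A^\perp z=-P_B^\perp P_A z$, whence $T^{k+1}a=-P_A^\perp P_B^\perp P_A^\perp z=P_A^\perp P_B^\perp P_A z=-P_A^\perp P_BP_A z$ (the final step using $P_A^\perp P_A=0$ and $P_B+P_B^\perp=I$). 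Feeding this back into $c_k=P_AP_B^\perp T^k a$ and simplifying (again via $P_B^\perp P_A^\perp z=-P_B^\perp P_A z$ for $z\in B$, and $P_AP_A^\perp=0$) yields the displayed identity; the case $k=0$ is a direct check.

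Now let $N\to\infty$. By von Neumann's Alternating Projection Theorem applied with the roles of $A$ and $B$ interchanged, $(P_BP_A)^{N+1}a\to P_{B\cap A}a=P_{A\cap B}a$ strongly, and since $A\cap B\subseteq A$ this gives $P_A(P_BP_A)^{N+1}a\to P_{A\cap B}a$. Hence $\sum_{k=0}^{N}c_k\to a-P_{A\cap B}a=P_{A\cap B}^\perp a$, which proves (\ref{eq:PL333}) together with the asserted strong convergence of the series. For (\ref{eq:PL444}), use that $P_{A\cap B}^\perp$ is an orthogonal projection, so $(P_{A\cap B}^\perp a_1,P_{A\cap B}^\perp a_2)=(P_{A\cap B}^\perp a_1,a_2)$; inserting the norm-convergent series (\ref{eq:PL333}) for $P_{A\cap B}^\perp a_1$ and observing that $T^{k+1}a_1$ lies in $A^\perp$, hence is orthogonal to $a_2\in A$, annihilates every term $-(T^{k+1}a_1,a_2)$ and leaves $\sum_{k=0}^{\infty}(P_B^\perp T^k a_1,a_2)$. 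Likewise $(P_{A\cap B}^\perp a,P_{A\cap B}^\perp b)=(P_{A\cap B}^\perp a,b)$, and now pairing the series with $b\in B$ kills the terms $(P_B^\perp T^k a,b)$, since $P_B^\perp T^k a$ lies in $B^\perp$, leaving $-\sum_{k=0}^{\infty}(T^{k+1}a,b)$, which is (\ref{eq:PL555}).

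I expect the only real obstacle to be discovering the auxiliary relation $T^k a=-P_A^\perp(P_BP_A)^k a$ (equivalently, the displayed identity for $c_k$): the content of the theorem is that, although (\ref{eq:PL333}) is written through the alternating product $P_A^\perp P_B^\perp$, its partial sums are driven by the dual alternating product $P_BP_A$, whose convergence to $P_{A\cap B}$ is exactly von Neumann's theorem. After that, everything reduces to elementary projection algebra together with the two facts that $P_A^\perp$ has range $A^\perp$ and $P_B^\perp$ has range $B^\perp$.
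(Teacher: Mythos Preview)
Your argument is correct. It takes a genuinely different route from the paper's proof, and in some respects a cleaner one.

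The paper derives all three identities from Theorem~\ref{thm:poj01}: it rewrites the summands of (\ref{eq:PL111}) by showing, for $a\in A$, that $P_B^\perp(P_AP_B)^k a = P_B^\perp(P_A^\perp P_B^\perp)^k a$ and $P_A^\perp P_B(P_AP_B)^k a = -(P_A^\perp P_B^\perp)^{k+1}a$, which yields (\ref{eq:PL333}); it then obtains (\ref{eq:PL444}) and (\ref{eq:PL555}) by similarly rewriting the summands of the bilinear identity (\ref{eq:PL222}), pairing up even- and odd-indexed terms. You instead bypass Theorem~\ref{thm:poj01} entirely: your auxiliary relation $T^k a = -P_A^\perp(P_BP_A)^k a$ shows directly that the partial sums of (\ref{eq:PL333}) telescope to $a - P_A(P_BP_A)^{N+1}a$, and then you invoke von~Neumann's theorem once, in its raw form, to pass to the limit. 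Your derivations of (\ref{eq:PL444}) and (\ref{eq:PL555}) are particularly economical: rather than manipulating (\ref{eq:PL222}), you simply pair the already-established series (\ref{eq:PL333}) against $a_2\in A$ or $b\in B$ and let orthogonality of $A^\perp$ to $A$ (respectively $B^\perp$ to $B$) kill half the terms. The paper's approach has the advantage of treating Theorem~\ref{thm:poj01} as a reusable module; yours is more self-contained and makes the mechanism---that (\ref{eq:PL333}) is really the alternating projection $(P_BP_A)^n$ in disguise---fully transparent.
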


\begin{proof}
If $a\in A$, then
\[
\begin{aligned}
P_B^{\perp}P_AP_Ba
&= P_B^{\perp}(I - P_A^{\perp})P_Ba = -P_B^{\perp}P_A^{\perp}P_Ba
= -P_B^{\perp}P_A^{\perp}(I - P_B^{\perp})a\\
&=P_B^{\perp}P_A^{\perp}P_B^{\perp}a.
\end{aligned}
\]
Hence, we have, for $k=1,2,\dots$,
\[
P_B^{\perp}(P_AP_B)^ka = P_B^{\perp}(P_A^{\perp}P_B^{\perp})(P_AP_B)^{k-1}a
= \cdots = P_B^{\perp}(P_A^{\perp}P_B^{\perp})^{k}a,
\]
and, for $k=0,1,\dots$,
\[
\begin{aligned}
P_A^{\perp}P_B(P_AP_B)^ka
&=P_A^{\perp}(I - P_B^{\perp})(P_AP_B)^ka = -P_A^{\perp}P_B^{\perp}(P_AP_B)^ka\\
&= - (P_A^{\perp}P_B^{\perp})^{k+1}a.
\end{aligned}
\]
Therefore, (\ref{eq:PL333}) and
\begin{equation}
\begin{aligned}
&(P_{A\cap B}^\perp a_1, P_{A\cap B}^\perp a_2) = \sum_{m=0}^{\infty}
\left\{(P_B^{\perp}(P_A^{\perp}P_B^{\perp})^m a_1, P_B^{\perp}(P_A^{\perp}P_B^{\perp})^m a_2)\right. \\
&\qquad\qquad\qquad\quad
\left. + ((P_A^{\perp}P_B^{\perp})^{m+1} a_1, (P_A^{\perp}P_B^{\perp})^{m+1} a_2)\right\},
\quad a_1, a_2\in A
\end{aligned}
\label{eq:PL444b}
\end{equation}
follow from (\ref{eq:PL111}) and (\ref{eq:PL222}), respectively.
However, we have, for $a_1, a_2\in A$ and $m=0,1,\dots$,
\begin{align*}
&(P_B^{\perp}(P_A^{\perp}P_B^{\perp})^ma_1, P_B^{\perp}(P_A^{\perp}P_B^{\perp})^ma_2)
= (P_B^{\perp}(P_A^{\perp}P_B^{\perp})^{2m}a_1, a_2),\\
&((P_A^{\perp}P_B^{\perp})^{m+1}a_1, (P_A^{\perp}P_B^{\perp})^{m+1}a_2)
= (P_B^{\perp}(P_A^{\perp}P_B^{\perp})^{2m+1}a_1, a_2).
\end{align*}
Thus, (\ref{eq:PL444}) follows from (\ref{eq:PL444b}).

Let $a\in A$ and $b\in B$. Then, $(P_B^{\perp}a, P_B^{\perp}b)=0$.
For $m=1,2,\dots$, we have
\[
P_B^{\perp}(P_AP_B)^mb = P_B^{\perp}P_A(P_BP_A)^{m-1}b = - (P_B^{\perp}P_A^{\perp})^mb,
\]
whence
\[
\begin{aligned}
(P_B^{\perp}(P_AP_B)^ma, P_B^{\perp}(P_AP_B)^mb)
&= - (P_B^{\perp}(P_A^{\perp}P_B^{\perp})^{m}a, (P_B^{\perp}P_A^{\perp})^mb)\\
&= - ((P_A^{\perp}P_B^{\perp})^{2m}a, b).
\end{aligned}
\]
Similarly, we have, for $m=0,1,\dots$,
\[
P_A^{\perp}P_B(P_AP_B)^mb = P_A^{\perp}(P_BP_A)^mb = P_A^{\perp}(P_B^{\perp}P_A^{\perp})^mb,
\]
whence
\[
\begin{aligned}
(P_A^{\perp}P_B(P_AP_B)^ma, P_A^{\perp}P_B(P_AP_B)^mb)
&= - ((P_A^{\perp}P_B^{\perp})^{m+1}a, P_A^{\perp}(P_B^{\perp}P_A^{\perp})^mb)\\
&= - ((P_A^{\perp}P_B^{\perp})^{2m+1}a, b).
\end{aligned}
\]
Thus, (\ref{eq:PL555}) follows from (\ref{eq:PL222}).
\end{proof}

In the applications of this paper, $A$ and $B$ correspond to the infinite past and
future of a multivariate stationary process.


\section{Fourier coefficients of the phase function}\label{sec:4}

Let $\{X_k\}$ be a $q$-variate stationary process satisfying the condition (A),
with spectral density $w$. 
Let $\{\tilde{X}_k\}$, $h$ and $h_{\sharp}$ be as in Section \ref{sec:2}. 
We define a sequence $\{\beta_k\}_{k=-\infty}^{\infty}$ 
as the (minus of the) Fourier coefficients of the phase function 
$h^*h_{\sharp}^{-1}$:
\begin{equation}
\beta_k :=
- \int_{-\pi}^{\pi}e^{-ik\theta}h(e^{i\theta})^*h_{\sharp}(e^{i\theta})^{-1}\frac{d\theta}{2\pi},
\qquad k\in\Z.
\label{eq:beta-def667}
\end{equation}
Since $h^*h_{\sharp}^{-1}$ is unitary matrix valued (see Section \ref{sec:2}), we see that 
$\{\beta_k\}\in \ell_2^{q\times q}(\Z)$. 
The sequence $\{\beta_k\}$ plays a central role in our representation theorems.

Recall the forward and backward innovation processes $\{\xi_k\}$ and
$\{\tilde{\xi}_k\}$, respectively, of $\{X_k\}$ from Section \ref{sec:2}.

\begin{lemma}\label{lem:beta-xi897}
We assume {\rm (A)}. Then we have
\[
\langle\xi_{j}, \tilde{\xi}_{k}\rangle = -\beta_{j+k},\qquad
\langle\tilde{\xi}_{k}, \xi_{j}\rangle = -\beta_{k+j}^*,
\qquad j, k \in \Z.
\]
\end{lemma}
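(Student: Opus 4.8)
The plan is to express both innovation processes as stochastic integrals against the \emph{single} random spectral measure $\Lambda$ of $\{X_k\}$ and then apply the spectral isometry. First I would note that for $\C^{q\times q}$-valued integrands $\Phi,\Psi$ whose rows lie in $L(w)$ one has
\[
E\Bigl[\Bigl(\int_{-\pi}^{\pi}\Phi\,\Lambda\Bigr)\Bigl(\int_{-\pi}^{\pi}\Psi\,\Lambda\Bigr)^*\Bigr]
=\int_{-\pi}^{\pi}\Phi(e^{i\theta})\,w(e^{i\theta})\,\Psi(e^{i\theta})^*\,\frac{d\theta}{2\pi},
\]
which is just the entrywise version of the scalar (row-vector) formula recorded in Section \ref{sec:2}. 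By (\ref{eq:xisp628}), $\xi_j=\int_{-\pi}^{\pi}e^{-ij\theta}h(e^{i\theta})^{-1}\Lambda(d\theta)$ already has this form.

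Next I would rewrite $\tilde\xi_k$ as an integral against $\Lambda$ rather than $\tilde\Lambda$. From $\tilde\Lambda(E)=\Lambda(-E)$ in (\ref{eq:randomsp628}) one gets $\int g(\theta)\,\tilde\Lambda(d\theta)=\int g(-\theta)\,\Lambda(d\theta)$ for measurable $g$, so (\ref{eq:etasp628}) becomes $\tilde\xi_k=\int_{-\pi}^{\pi}e^{ik\theta}\,\tilde h(e^{-i\theta})^{-1}\Lambda(d\theta)$. Specializing the definition (\ref{eq:outft628}) of $h_{\sharp}$ to $z=e^{i\theta}$ gives $h_{\sharp}(e^{i\theta})=\tilde h(e^{-i\theta})^*$, hence $\tilde h(e^{-i\theta})^{-1}=\bigl(h_{\sharp}(e^{i\theta})^{-1}\bigr)^*$, and therefore $\tilde\xi_k=\int_{-\pi}^{\pi}e^{ik\theta}\bigl(h_{\sharp}(e^{i\theta})^{-1}\bigr)^*\Lambda(d\theta)$.

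Applying the isometry to $\langle\xi_j,\tilde\xi_k\rangle=E[\xi_j\tilde\xi_k^*]$ and using $w=hh^*$ from (\ref{eq:decomp888}) to simplify $h(e^{i\theta})^{-1}w(e^{i\theta})=h(e^{i\theta})^*$ would give
\[
\langle\xi_j,\tilde\xi_k\rangle
=\int_{-\pi}^{\pi}e^{-i(j+k)\theta}\,h(e^{i\theta})^*\,h_{\sharp}(e^{i\theta})^{-1}\,\frac{d\theta}{2\pi}
=-\beta_{j+k}
\]
by the defining formula (\ref{eq:beta-def667}); there is no convergence issue because the integrand is a Fourier coefficient of the phase function $h^*h_{\sharp}^{-1}$, which is unitary-valued and hence bounded. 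The second identity is then immediate from taking Hermitian conjugates: $\langle\tilde\xi_k,\xi_j\rangle=E[\tilde\xi_k\xi_j^*]=\bigl(E[\xi_j\tilde\xi_k^*]\bigr)^*=(-\beta_{j+k})^*=-\beta_{k+j}^*$.

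The step needing the most care --- bookkeeping rather than a genuine obstacle --- is tracking the conjugates and inverses in passing from $(\tilde h,\tilde\Lambda)$ to $(h_{\sharp},\Lambda)$, in particular establishing $\tilde h(e^{-i\theta})^{-1}=(h_{\sharp}(e^{i\theta})^{-1})^*$ and keeping straight which factor carries the adjoint in the final integrand. One should also check at the outset that the rows of $e^{-ij\theta}h^{-1}$ and of $e^{ik\theta}(h_{\sharp}^{-1})^*$ lie in $L(w)$, which holds since $h^{-1}w(h^{-1})^*=I_q$ a.e.\ by (\ref{eq:decomp888}) and $(h_{\sharp}^{-1})^*wh_{\sharp}^{-1}=I_q$ a.e.\ by (\ref{eq:decomp111}).
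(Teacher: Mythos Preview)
Your proof is correct and follows essentially the same route as the paper: rewrite $\tilde\xi_k$ as a stochastic integral against $\Lambda$ via the identity $\tilde h(e^{-i\theta})^{-1}=(h_{\sharp}(e^{i\theta})^*)^{-1}$, apply the spectral isometry together with $w=hh^*$, and identify the resulting integral as $-\beta_{j+k}$; the second identity then follows by conjugation. Your version is more careful about the bookkeeping (the $L(w)$ check, the adjoint/inverse tracking) than the paper's terse argument, but the method is the same.
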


\begin{proof}
From (\ref{eq:outft628}), (\ref{eq:randomsp628}) and (\ref{eq:etasp628}), we see that
\[
\tilde{\xi}_k:=\int_{\pi}^{\pi}e^{ik\theta}\{h_{\sharp}(e^{i\theta})^*\}^{-1}
\Lambda(d\theta),\qquad k\in\Z.
\]
Combining this with (\ref{eq:decomp888}) and (\ref{eq:xisp628}), we obtain
\[
\begin{aligned}
\langle\xi_{j}, \tilde{\xi}_{k}\rangle
&=\int_{-\pi}^{\pi}e^{-i(j+k)\theta}
h(e^{i\theta})^{-1}h(e^{i\theta})h(e^{i\theta})^*h_{\sharp}(e^{i\theta})^{-1}\frac{d\theta}{2\pi}\\
&=\int_{-\pi}^{\pi}e^{-i(j+k)\theta}h(e^{i\theta})^*h_{\sharp}(e^{i\theta})^{-1}\frac{d\theta}{2\pi}
= -\beta_{j+k},
\end{aligned}
\]
which also implies the second equality.
\end{proof}

\begin{remark}
By Lemma \ref{lem:beta-xi897}, we have the following mutual representations between
$\{\xi_k\}$ and $\{\tilde{\xi}_k\}$:
\[
\xi_j
=-\sum_{k=-\infty}^\infty\beta_{j+k} \tilde{\xi}_{jk},
\qquad
\tilde{\xi}_k
=-\sum_{j=-\infty}^{\infty} \beta_{k+j}^*\xi_{j}.
\]
\end{remark}

\begin{lemma}\label{lem:beta-proper648}
We assume {\rm (A)}. Then,
for $\{s_l\}\in \ell_{2+}^{q\times q}$ and $n\in\Z$, we have
\begin{align}
&P_{[-n,\infty)}^{\perp}
\left(\sum_{l=0}^{\infty} s_l \xi_l\right)
=-\sum_{j=0}^\infty
\left(\sum_{l=0}^\infty s_l \beta_{n+j+l+1}\right)
\tilde{\xi}_{n+j+1},
\label{eq:proj555}\\
&P_{(-\infty,-1]}^{\perp}
\left(\sum_{l=0}^\infty s_l \tilde{\xi}_{n+l+1}\right)
=-\sum_{j=0}^\infty
\left(\sum_{l=0}^\infty s_l \beta_{n+j+l+1}^*\right)
\xi_j.
\label{eq:proj666}
\end{align}
In particular,
$\{\sum_{l=0}^\infty s_l \beta_{n+j+l+1}\}_{j=0}^{\infty},
\{\sum_{l=0}^\infty s_l \beta_{n+j+l+1}^*\}_{j=0}^{\infty} \in
\ell_{2+}^{q\times q}$.
\end{lemma}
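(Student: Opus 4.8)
The plan is to expand both projections in the appropriate orthonormal bases and read off the coefficients via Lemma~\ref{lem:beta-xi897}. Recall from Section~\ref{sec:2} that $\{\tilde{\xi}_k^j : j=1,\dots,q,\ k\ge n+1\}$ is a complete orthonormal basis of $(M_{[-n,\infty)}^X)^{\perp}$ and $\{\xi_k^j : j=1,\dots,q,\ k\ge 0\}$ is a complete orthonormal basis of $(M_{(-\infty,-1]}^X)^{\perp}$. Since the projections act componentwise, recollecting the component-wise Fourier expansions into the matrix inner product $\langle\cdot,\cdot\rangle=E[\cdot\,(\cdot)^*]$ gives, for any $Y\in M^q$,
\[
P_{[-n,\infty)}^{\perp}Y=\sum_{k=n+1}^{\infty}\langle Y,\tilde{\xi}_k\rangle\tilde{\xi}_k,
\qquad
P_{(-\infty,-1]}^{\perp}Y=\sum_{j=0}^{\infty}\langle Y,\xi_j\rangle\xi_j,
\]
both series converging in $M^q$.

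First I would check that $Y:=\sum_{l=0}^{\infty}s_l\xi_l$ defines an element of $M^q$: since $\{\xi_l\}$ is orthonormal and $\{s_l\}\in\ell_{2+}^{q\times q}$, the partial sums are Cauchy in $M^q$; likewise for $Z:=\sum_{l=0}^{\infty}s_l\tilde{\xi}_{n+l+1}$. Next, using the continuity of $\langle\,\cdot\,,\tilde{\xi}_k\rangle$ on $M^q$ together with Lemma~\ref{lem:beta-xi897},
\[
\langle Y,\tilde{\xi}_k\rangle=\sum_{l=0}^{\infty}s_l\langle\xi_l,\tilde{\xi}_k\rangle=-\sum_{l=0}^{\infty}s_l\beta_{l+k},
\]
the last series being absolutely convergent by Cauchy--Schwarz, since $\{s_l\}\in\ell_{2+}^{q\times q}$ and $\{\beta_k\}\in\ell_2^{q\times q}(\Z)$. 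Substituting into the first expansion and reindexing by $k=n+j+1$ yields (\ref{eq:proj555}). Identity (\ref{eq:proj666}) follows in the same way from the second expansion and the relation $\langle\tilde{\xi}_{n+l+1},\xi_j\rangle=-\beta_{n+j+l+1}^{*}$ of Lemma~\ref{lem:beta-xi897}.

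Finally, the ``in particular'' assertion is Parseval's identity: since $P_{[-n,\infty)}^{\perp}Y\in M^q$ has finite norm and is expanded over the orthonormal system $\{\tilde{\xi}_{n+j+1}^{i}\}$, the sum over $j\ge 0$ of the squared Hilbert--Schmidt norms of the coefficient matrices $\sum_{l=0}^{\infty}s_l\beta_{n+j+l+1}$ is finite, and since the spectral norm is dominated by the Hilbert--Schmidt norm, this gives $\{\sum_{l=0}^{\infty}s_l\beta_{n+j+l+1}\}_{j=0}^{\infty}\in\ell_{2+}^{q\times q}$; the statement for the adjoints follows likewise from (\ref{eq:proj666}).

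I do not anticipate a real obstacle. The two points that need care are the bookkeeping of the matrix-conjugate conventions, so that $\langle Y,\tilde{\xi}_k\rangle\tilde{\xi}_k$ is exactly the componentwise projection with no stray transpose or sign, and the justification of interchanging the series defining $Y$ (and $Z$) with the inner products and with the projection operators — each reduces to continuity of a bounded operator on $M^q$, but should be noted explicitly.
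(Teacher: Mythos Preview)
Your proposal is correct and follows essentially the same route as the paper: expand the projection in the complete orthonormal basis $\{\tilde{\xi}_{n+j+1}^{i}\}$ (resp.\ $\{\xi_j^{i}\}$) of the relevant orthogonal complement, and use Lemma~\ref{lem:beta-xi897} to identify the coefficient matrices. The paper's proof is terser---it omits the convergence bookkeeping and the Parseval justification for the ``in particular'' clause that you have supplied---but the argument is the same.
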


\begin{proof}
By Lemma \ref{lem:beta-xi897}, we have
$\langle \sum_{l=0}^{\infty} s_l \xi_l, \tilde{\xi}_{n+j+1}\rangle
= - \sum_{l=0}^\infty s_l \beta_{n+j+l+1}$. On the other hand,
$\{\tilde{\xi}^k_{n+j+1}: k=1,\dots,q,\ j\ge 0\}$ is a complete orthonormal basis of
$(M_{[-n,\infty)}^X)^{\bot}$. Thus (\ref{eq:proj555}) follows.
We can prove (\ref{eq:proj666}) in a similar way.
\end{proof}

\begin{remark}
In Lemma \ref{lem:beta-proper648}, the map
$\{s_l\}_{l=0}^{\infty} \mapsto \{\sum_{l=0}^\infty s_l \beta_{n+j+l+1}\}_{j=0}^{\infty}$ defines a
bounded Hankel operator $\Gamma_n: \ell_{2+}^{q\times q} \to \ell_{2+}^{q\times q}$
with block Hankel matrix
\[
\left(
\begin{matrix}
\beta_{n+1} & \beta_{n+2} & \beta_{n+3} & \cdots \cr
\beta_{n+2} & \beta_{n+3} & \beta_{n+4} & \cdots \cr
\beta_{n+3} & \beta_{n+4} & \beta_{n+5} & \cdots \cr
\vdots      & \vdots      & \vdots      & \ddots
\end{matrix}
\right)
\]
(cf.\ \cite{Pe}), and similarly for $\{s_l\}_{l=0}^{\infty} \mapsto
\{\sum_{l=0}^\infty s_l \beta_{n+j+l+1}^*\}_{j=0}^{\infty}$.
\end{remark}

Lemma \ref{lem:beta-proper648} allows one to define,
for $n\in\mathbb{N}$ and $k\in\mathbb{N}\cup\{0\}$,
the sequences $\{b_{n,j}^k\}_{j=0}^{\infty}\in \ell_{2+}^{q\times q}$
by the recursion
\begin{equation}
b_{n,j}^0=\delta_{0j}I_q,
\ \ \
b_{n,j}^{2k+1}
=\sum_{l=0}^{\infty} b_{n,l}^{2k} \beta_{n+j+l+1},
\ \ \
b_{n,j}^{2k+2}
=\sum_{l=0}^{\infty} b_{n,l}^{2k+1} \beta_{n+j+l+1}^*.
\label{recursion}
\end{equation}
For $n\in\mathbb{N}$, we define the sequence $\{W_n^k\}_{k=0}^{\infty}$ in
$M^q$ by
\begin{align}
&W_n^{2k}
=P_{(-\infty,-1]}^{\perp}
(P_{[-n,\infty)}^\perp P_{(-\infty,-1]}^{\perp})^k X_0,\qquad k=0,1,\dots,
\label{eq:W-even216}\\
&W_n^{2k+1}
=-
(P_{[-n,\infty)}^{\perp} P_{(-\infty,-1]}^{\perp})^{k+1} X_0,\qquad k=0,1,\dots.
\label{eq:W-odd}
\end{align}

\begin{proposition}\label{prop:forward638}
We assume {\rm (A)}. Then, for $n\in\mathbb{N}$ and $k\in\mathbb{N}\cup\{0\}$, we have
\begin{equation}
W_n^{2k}
=c_0\sum_{j=0}^\infty b_{n,j}^{2k} \xi_j,
\qquad
W_n^{2k+1}
=c_0\sum_{j=0}^\infty b_{n,j}^{2k+1} \tilde{\xi}_{n+j+1}
\label{eq:expan111}
\end{equation}
and
\begin{equation}
\langle W_n^{2k}, X_0\rangle
=c_0 b_{n,0}^{2k} c_0^*,
\qquad
\langle W_n^{2k+1}, X_{-(n+1)}\rangle
=c_0 b_{n,j}^{2k+1} \tilde{c}_0^*.
\label{eq:expan222}
\end{equation}
\end{proposition}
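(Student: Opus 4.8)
The strategy is to prove both equalities in \eqref{eq:expan111} simultaneously by induction on $k$, and then deduce \eqref{eq:expan222} by pairing with $X_0$ and $X_{-(n+1)}$ and using the forward and backward MA representations \eqref{eq:MA428}. The base case $k=0$ of the first identity is immediate: $W_n^0 = P_{(-\infty,-1]}^{\perp}X_0$, and by \eqref{eq:causal111} the space $(M_{(-\infty,-1]}^X)^{\perp}$ has orthonormal basis $\{\xi_j^m : m=1,\dots,q,\ j\ge 0\}$, so the MA representation $X_0 = \sum_{k\le 0}c_{-k}\xi_k$ gives $W_n^0 = c_0\xi_0 = c_0\sum_{j\ge 0}b_{n,j}^0\xi_j$ since $b_{n,j}^0 = \delta_{0j}I_q$.

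For the inductive step, suppose $W_n^{2k} = c_0\sum_{j\ge 0}b_{n,j}^{2k}\xi_j$. Then $W_n^{2k+1} = -P_{[-n,\infty)}^{\perp}W_n^{2k}$ by the definitions \eqref{eq:W-even216}--\eqref{eq:W-odd} (one intermediate projection is absorbed since $W_n^{2k}$ already lies in $(M_{(-\infty,-1]}^X)^{\perp}$, so $P_{[-n,\infty)}^{\perp}P_{(-\infty,-1]}^{\perp}$ applied to it equals $P_{[-n,\infty)}^{\perp}$ applied to it). Applying \eqref{eq:proj555} of Lemma~\ref{lem:beta-proper648} with $s_l = c_0 b_{n,l}^{2k}$ yields
\[
W_n^{2k+1} = c_0\sum_{j=0}^{\infty}\Bigl(\sum_{l=0}^{\infty}b_{n,l}^{2k}\beta_{n+j+l+1}\Bigr)\tilde{\xi}_{n+j+1}
= c_0\sum_{j=0}^{\infty}b_{n,j}^{2k+1}\tilde{\xi}_{n+j+1},
\]
using the recursion \eqref{recursion} for $b_{n,j}^{2k+1}$. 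The passage from $W_n^{2k+1}$ to $W_n^{2k+2} = P_{(-\infty,-1]}^{\perp}(-P_{[-n,\infty)}^{\perp}W_n^{2k+1}) \cdot(-1)$ — more precisely $W_n^{2k+2} = P_{(-\infty,-1]}^{\perp}(P_{[-n,\infty)}^{\perp}P_{(-\infty,-1]}^{\perp})^{k+1}X_0 = -P_{(-\infty,-1]}^{\perp}W_n^{2k+1}$ since $W_n^{2k+1}\in(M_{[-n,\infty)}^X)^{\perp}$ — is handled symmetrically by \eqref{eq:proj666}, with $s_l = c_0 b_{n,l}^{2k+1}$, giving $W_n^{2k+2} = c_0\sum_{j\ge 0}b_{n,j}^{2k+2}\xi_j$ via the recursion for $b_{n,j}^{2k+2}$. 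One must be slightly careful tracking the signs in \eqref{eq:W-even216}--\eqref{eq:W-odd} and checking that at each stage the vector to which the next projection is applied already lies in the relevant orthogonal complement (so that the inner $P^{\perp}$ acts trivially); this bookkeeping is the only real subtlety.

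For \eqref{eq:expan222}, pair $W_n^{2k} = c_0\sum_{j\ge 0}b_{n,j}^{2k}\xi_j$ with $X_0$. Since $X_0 = \sum_{m=0}^{\infty}c_m\xi_{-m}$ and $\langle\xi_j,\xi_{-m}\rangle = \delta_{j,-m}I_q$, only the $j=0$, $m=0$ term survives, giving $\langle W_n^{2k},X_0\rangle = c_0 b_{n,0}^{2k}c_0^*$. Likewise, pairing $W_n^{2k+1} = c_0\sum_{j\ge 0}b_{n,j}^{2k+1}\tilde{\xi}_{n+j+1}$ with $X_{-(n+1)}$ and using the backward MA representation $X_{-(n+1)} = \sum_{m=0}^{\infty}\tilde{c}_m\tilde{\xi}_{n+1-m}$ together with $\langle\tilde{\xi}_{n+j+1},\tilde{\xi}_{n+1-m}\rangle = \delta_{j,-m}I_q$ isolates the $j=0$ term, yielding $\langle W_n^{2k+1},X_{-(n+1)}\rangle = c_0 b_{n,0}^{2k+1}\tilde{c}_0^*$ (I read the $b_{n,j}^{2k+1}$ in the statement as a typo for $b_{n,0}^{2k+1}$). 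I expect the main obstacle to be none of the analysis but rather the careful verification that the intermediate projections collapse as claimed — i.e., confirming at each step of the alternating sequence which orthogonal complement the current iterate sits in — and keeping the index shifts in \eqref{eq:proj555}--\eqref{eq:proj666} and \eqref{recursion} consistent throughout the induction.
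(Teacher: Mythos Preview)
Your proposal is correct and follows essentially the same argument as the paper: both prove \eqref{eq:expan111} by induction on $k$, using the identities $W_n^{2k+1}=-P_{[-n,\infty)}^{\perp}W_n^{2k}$ and $W_n^{2k+2}=-P_{(-\infty,-1]}^{\perp}W_n^{2k+1}$ together with Lemma~\ref{lem:beta-proper648} and the recursion \eqref{recursion}, and then derive \eqref{eq:expan222} by pairing with $X_0$ and $X_{-(n+1)}$ via the MA representations \eqref{eq:MA428}. Your reading of $b_{n,j}^{2k+1}$ in \eqref{eq:expan222} as a typo for $b_{n,0}^{2k+1}$ is correct, and the recurrence $W_n^{2k+1}=-P_{[-n,\infty)}^{\perp}W_n^{2k}$ follows straight from unwinding the definitions without any ``absorption'' argument (the outermost projection in $W_n^{2k}$ is already $P_{(-\infty,-1]}^{\perp}$).
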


\begin{proof}
Note that, from the definition of $W^k_n$,
\[
W_n^{2k+1}=-P_{[-n,\infty)}^\perp W_n^{2k},
\qquad
W_n^{2k+2}=-P_{(-\infty,-1]}^\perp W_n^{2k+1}.
\]
We prove (\ref{eq:expan111}) by induction.
First, from (\ref{eq:causal111}) and (\ref{eq:MA428}), we have
\[
W_n^0=P_{(-\infty,-1]}^{\perp} X_0
=c_0 \xi_0
=c_0
\sum_{j=0}^\infty b_{n,j}^0 \xi_j.
\]
For $k=0,1,\dots$, assume that
$W_n^{2k}=c_0\sum_{j=0}^\infty b_{n,j}^{2k} \xi_j$.
Then, by (\ref{eq:proj555}),
\[
\begin{aligned}
W_n^{2k+1}
&=-P_{[-n,\infty)}^{\perp}
\left(c_0\sum_{j=0}^{\infty} b_{n,j}^{2k} \xi_j\right)
=c_0\sum_{j=0}^{\infty}
\left(\sum_{l=0}^\infty b_{n,l}^{2k} \beta_{n+j+l+1}
\right)\tilde{\xi}_{n+j+1}\\
&=c_0\sum_{j=0}^\infty b_{n,j}^{2k+1} \tilde{\xi}_{n+j+1},
\end{aligned}
\]
and, by (\ref{eq:proj666}),
\[
\begin{aligned}
W_n^{2k+2}
&=-P_{(-\infty,-1]}^{\perp}
\left(c_0\sum_{j=0}^\infty b_{n,j}^{2k+1} \tilde{\xi}_{n+j+1}\right)\\
&=c_0\sum_{j=0}^{\infty}
\left(\sum_{l=0}^{\infty} b_{n,l}^{2k+1} \beta_{n+j+l+1}^*
\right)\xi_j
=c_0\sum_{j=0}^{\infty}
b_{n,j}^{2k+2} \xi_j.
\end{aligned}
\]
Thus (\ref{eq:expan111}) follows.
We obtain the first (resp., second) equality in (\ref{eq:expan222}) from
the first (resp., second) equalities in (\ref{eq:expan111}) and (\ref{eq:MA428}).
\end{proof}

Lemma \ref{lem:beta-proper648} also allows one to define,
for $n\in\mathbb{N}$ and $k\in\mathbb{N}\cup\{0\}$,
the sequences $\{\tilde{b}_{n,j}^k\}_{j=0}^{\infty}\in \ell_{2+}^{q\times q}$ by
the recursion
\begin{equation}
\tilde{b}_{n,j}^0=\delta_{0j}I_q,
\ \ \
\tilde{b}_{n,j}^{2k+1}
=\sum_{l=0}^\infty
\tilde{b}_{n,l}^{2k} \beta_{n+j+l+1}^*,
\ \ \
\tilde{b}_{n,j}^{2k+2}
=\sum_{l=0}^{\infty}
\tilde{b}_{n,l}^{2k+1} \beta_{n+j+l+1}.
\label{eq:recursion2}
\end{equation}

\begin{proposition}\label{prop:positiv528}
We assume {\rm (A)}. Then,
for $n\in\mathbb{N}$ and $k\in\mathbb{N}\cup\{0\}$, we have
$b_{n,0}^{2k} \ge 0$ and $\tilde{b}_{n,0}^{2k} \ge 0$.
\end{proposition}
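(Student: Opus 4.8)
The plan is to exhibit $c_0\,b_{n,0}^{2k}\,c_0^{*}$ as a Gram matrix, which is automatically positive, and then to strip off the invertible factor $c_0$.

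Set $A:=M_{(-\infty,-1]}^{X}$ and $B:=M_{[-n,\infty)}^{X}$, so that $P_A^{\perp}=P_{(-\infty,-1]}^{\perp}$ and $P_B^{\perp}=P_{[-n,\infty)}^{\perp}$, and put $S:=P_A^{\perp}P_B^{\perp}P_A^{\perp}$, viewed as an operator acting componentwise on $M^{q}$. Since orthogonal projections are self-adjoint for the $\C^{q\times q}$-valued pairing $\langle\cdot,\cdot\rangle$, so is $S$, and one has $S=P_A^{\perp}S=SP_A^{\perp}$ together with $S(M^{q})\subseteq A^{\perp}$. Using the relations $W_n^{2k+1}=-P_B^{\perp}W_n^{2k}$ and $W_n^{2k+2}=-P_A^{\perp}W_n^{2k+1}$ recorded in the proof of Proposition \ref{prop:forward638}, and $W_n^{0}=P_A^{\perp}X_0$, a short induction gives $W_n^{2k}=S^{k}W_n^{0}$ and $W_n^{2k}\in A^{\perp}$ for all $k\ge 0$. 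Because $X_0-W_n^{0}=P_AX_0\in A$ is orthogonal to $W_n^{2k}$, the first identity in (\ref{eq:expan222}) rewrites as
\[
c_0\,b_{n,0}^{2k}\,c_0^{*}=\langle W_n^{2k},X_0\rangle=\langle S^{k}W_n^{0},\,W_n^{0}\rangle .
\]

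It then remains to check that $\langle S^{k}W_n^{0},W_n^{0}\rangle\ge 0$, which I would do by splitting on the parity of $k$. If $k=2m$, then writing $S^{2m}=S^{m}\circ S^{m}$ and using self-adjointness of $S^{m}$ gives $\langle S^{2m}W_n^{0},W_n^{0}\rangle=\langle S^{m}W_n^{0},S^{m}W_n^{0}\rangle=\langle W_n^{2m},W_n^{2m}\rangle=E[W_n^{2m}(W_n^{2m})^{*}]\ge 0$. If $k=2m+1$, then writing $S^{2m+1}=S^{m}\circ S^{m+1}$ and using self-adjointness of $S^{m}$ gives $\langle S^{2m+1}W_n^{0},W_n^{0}\rangle=\langle S^{m+1}W_n^{0},S^{m}W_n^{0}\rangle=\langle W_n^{2m+2},W_n^{2m}\rangle$; substituting $W_n^{2m+2}=P_A^{\perp}P_B^{\perp}W_n^{2m}$, pushing the projections across the pairing, and using $W_n^{2m}\in A^{\perp}$ together with $\langle P_B^{\perp}u,v\rangle=\langle P_B^{\perp}u,P_B^{\perp}v\rangle$, this collapses to $\langle P_B^{\perp}W_n^{2m},P_B^{\perp}W_n^{2m}\rangle=E[(P_B^{\perp}W_n^{2m})(P_B^{\perp}W_n^{2m})^{*}]\ge 0$. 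Hence $c_0\,b_{n,0}^{2k}\,c_0^{*}\ge 0$; since $c_0a_0=-I_q$ shows $c_0$ is invertible, $b_{n,0}^{2k}=(-a_0)\big(c_0\,b_{n,0}^{2k}\,c_0^{*}\big)(-a_0)^{*}\ge 0$, because $C\mapsto CMC^{*}$ preserves positivity.

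For the backward quantity $\tilde{b}_{n,0}^{2k}$, I would apply what has just been proved to the time-reversed process $\{\tilde{X}_k\}$ of (\ref{eq:t-reversed146}), which also satisfies {\rm (A)}. By Lemma \ref{lem:beta-xi897} its forward and backward innovations are $\{\tilde{\xi}_k\}$ and $\{\xi_k\}$, with $\langle\tilde{\xi}_j,\xi_k\rangle=-\beta_{j+k}^{*}$, so the $\beta$-sequence of $\{\tilde{X}_k\}$ is $\{\beta_k^{*}\}$; consequently the recursion (\ref{recursion}) written for $\{\tilde{X}_k\}$ is precisely the recursion (\ref{eq:recursion2}) defining $\tilde{b}_{n,j}^{k}$. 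Hence $\tilde{b}_{n,0}^{2k}$ of $\{X_k\}$ equals $b_{n,0}^{2k}$ of $\{\tilde{X}_k\}$, which is $\ge 0$. The only real obstacle is keeping the matrix-valued inner product straight — checking that orthogonal projections are self-adjoint for $\langle\cdot,\cdot\rangle$, that $\langle Tx,y\rangle=\langle x,T^{*}y\rangle$ for operators acting componentwise, and the identity $\langle P_B^{\perp}u,v\rangle=\langle P_B^{\perp}u,P_B^{\perp}v\rangle$; these are routine transcriptions of scalar facts, but they are exactly what the parity-splitting step relies on.
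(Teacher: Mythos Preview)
Your proof is correct and follows essentially the same route as the paper's: both arguments exhibit $c_0\,b_{n,0}^{2k}\,c_0^{*}=\langle W_n^{2k},X_0\rangle$ as a Gram matrix by splitting on the parity of $k$ and using self-adjointness of the projections, arriving at $\langle W_n^{2m},W_n^{2m}\rangle$ for $k=2m$ and $\langle P_B^{\perp}W_n^{2m},P_B^{\perp}W_n^{2m}\rangle$ for $k=2m+1$ (your labels $A$, $B$ are swapped relative to the paper's, but that is immaterial). The paper simply says ``in the same way as the proof of (\ref{eq:PL444})'' where you spell out the operator $S=P_A^{\perp}P_B^{\perp}P_A^{\perp}$ and its self-adjointness explicitly; the backward case via time reversal is handled identically.
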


\begin{proof}
Let $A=M_{[-n,\infty)}^X$ and $B=M_{(-\infty,-1]}^X$. Then,
in the same way as the proof of (\ref{eq:PL444}) in
Theorem \ref{thm:proj02}, we have
\[
\langle W_n^{2k}, X_0\rangle
=
\begin{cases}
\langle P_B^{\perp}(P_A^{\perp}P_B^{\perp})^m X_0, P_B^{\perp}(P_A^{\perp}P_B^{\perp})^m X_0\rangle, &
\mbox{$k=2m:$ even},\\
\langle (P_A^{\perp}P_B^{\perp})^{m+1}X_0, (P_A^{\perp}P_B^{\perp})^{m+1}X_0\rangle,
& \mbox{$k=2m+1:$ odd}.
\end{cases}
\]
This and the first equality in (\ref{eq:expan222}) give
$c_0 b_{n,0}^{2k}c_0^* \ge 0$ or $b_{n,0}^{2k} \ge 0$.
The second equality follows from the first one applied to $\{\tilde{X}_k\}$.
\end{proof}


\section{Representation theorems}\label{sec:5}

In this section, we develop explicit representations for the finite predictor coefficients,
the finite prediction error covariances and the PACF of a $q$-variate stationary process
$\{X_k\}$, in terms of the sequence $\{\beta_j\}$ defined in Section \ref{sec:4}.
We focus on the one-step ahead predictions to keep the notation simple.

In deriving the representation theorems for the finite predictors of a
$q$-variate stationary process $\{X_k\}$, the following \textit{intersection of past and future}
property of $\{X_k\}$ plays a key role:
\[
M_{(-\infty,-1]}^X\cap M_{[-n,\infty)}^X=M_{[-n,-1]}^X,\qquad n=1,2,\dots.
\tag{\rm IPF}
\]
A useful sufficient condition for (IPF) is the following \textit{minimality} condition:
\[
\begin{aligned}
&\mbox{$\{X_k\}$ has spectral density $w$ satisfying $\det w(e^{i\theta})>0$, $\sigma$-a.e.},\\
&\mbox{and $w^{-1}\in L^{q\times q}_1(\T)$}.
\end{aligned}
\tag{\rm M}
\]
In fact, by \cite[Corollary 3.6]{IKP}, (M) implies (IPF). The condition (M) also implies 
(A) by \cite[Lemma 2.5 and Theorem 2.8]{M60}, or more directly by
\[
\begin{aligned}
\vert \log \det w\vert 
&= q\vert \log (\det w)^{1/q}\vert 
\le q\left\{ (\det w)^{1/q} + (\det w)^{-1/q} \right\}\\
&= q\left\{ (\lambda_1\cdots\lambda_q)^{1/q} + (\lambda_1^{-1}\cdots \lambda_q^{-1})^{1/q} \right\}\\
&\le q\left\{ \frac{\lambda_1 + \cdots\ + \lambda_q}{q} 
+ \frac{\lambda_1^{-1} + \cdots + \lambda_q^{-1}}{q} \right\}= \Tr w + \Tr w^{-1},
\end{aligned}
\]
where $\lambda_1,\dots,\lambda_q$ denote the eigenvalues of $w$ and we have used the inequality 
$\vert \log y\vert \le y + (1/y)$ for $y>0$.

The property (IPF) is closely related to the property
\[
M_{(-\infty,-1]}^X\cap M_{[0,\infty)}^X=\{0\}
\tag{\rm CND}
\]
called \textit{complete nondeterminacy} by \cite{Sar}.
In fact, by \cite[Theorem 3.5]{IKP}, (IPF) and (CND) are equivalent under (A).
The condition (CND) is also closely related to the \textit{rigidity}\/ for matrix-valued Hardy
functions (see \cite{KIP}).
It should be noticed that if $\{X_k\}$ satisfies (IPF), then so does the time-reversed
process $\{\tilde{X}_k\}$, and that the same holds for (M) and (CND).

Recall $W_n^k$ from (\ref{eq:W-even216}) and (\ref{eq:W-odd}). 
The next proposition is a direct consequence of (\ref{eq:PL333}) in Theorem \ref{thm:proj02}.

\begin{proposition}\label{prop:basic-rep123}
We assume {\rm (IPF)}. Then, for $n\in\mathbb{N}$, we have
\[
P_{[-n,-1]}^{\perp} X_0 = \sum_{k=0}^{\infty} W_n^{k},
\]
the sum converging strongly in $M^q$.
\end{proposition}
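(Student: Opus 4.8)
The plan is to apply Theorem \ref{thm:proj02}, equation (\ref{eq:PL333}), directly with the concrete choices $A = M_{[-n,\infty)}^X$ and $B = M_{(-\infty,-1]}^X$ and with $a = X_0$. Indeed, since $X_0 \in M_{[-n,\infty)}^X = A$, the element $X_0$ lies in the required subspace, so (\ref{eq:PL333}) is applicable (its vector-valued form applied coordinatewise, since $X_0 \in M^q$). Under (IPF) we have $A \cap B = M_{[-n,-1]}^X$, so the left-hand side of (\ref{eq:PL333}) becomes exactly $P_{[-n,-1]}^\perp X_0$, which is the quantity we want to expand.

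Next I would match the general terms of (\ref{eq:PL333}) against the definitions (\ref{eq:W-even216}) and (\ref{eq:W-odd}). The $k$-th summand in (\ref{eq:PL333}) is
\[
P_B^{\perp}(P_A^{\perp}P_B^{\perp})^k X_0 - (P_A^{\perp}P_B^{\perp})^{k+1} X_0 .
\]
By definition, $P_B^{\perp}(P_A^{\perp}P_B^{\perp})^k X_0 = P_{(-\infty,-1]}^{\perp}(P_{[-n,\infty)}^\perp P_{(-\infty,-1]}^{\perp})^k X_0 = W_n^{2k}$, while $-(P_A^{\perp}P_B^{\perp})^{k+1} X_0 = -(P_{[-n,\infty)}^{\perp} P_{(-\infty,-1]}^{\perp})^{k+1} X_0 = W_n^{2k+1}$. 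Hence the $k$-th summand equals $W_n^{2k} + W_n^{2k+1}$, and summing over $k \ge 0$ rearranges the series $\sum_{k=0}^\infty (W_n^{2k} + W_n^{2k+1})$ into $\sum_{m=0}^\infty W_n^m$. Since Theorem \ref{thm:proj02} asserts that the series in (\ref{eq:PL333}) converges strongly, the same holds here, giving $P_{[-n,-1]}^\perp X_0 = \sum_{m=0}^\infty W_n^m$ with strong convergence in $M^q$.

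The only genuinely substantive point — and it is minor — is verifying that regrouping the convergent series $\sum_{k \ge 0} (W_n^{2k} + W_n^{2k+1})$ into $\sum_{m \ge 0} W_n^m$ is legitimate, i.e., that the partial sums $\sum_{m=0}^{2N}$ and $\sum_{m=0}^{2N+1}$ both converge to the same limit. This is immediate once one observes that $W_n^{2k+1} = -P_{[-n,\infty)}^\perp W_n^{2k}$ (as already recorded in the proof of Proposition \ref{prop:forward638}), so that $W_n^{2k}$ and $W_n^{2k+1}$ have norms of the same order; more simply, consecutive partial sums differ by a single term $W_n^m$, and $\|W_n^m\|_{M^q} \to 0$ because the grouped series converges. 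I expect no real obstacle: the statement is essentially a dictionary translation of (\ref{eq:PL333}) into the notation of Section \ref{sec:4}, with (IPF) supplying the identification $A \cap B = M_{[-n,-1]}^X$.
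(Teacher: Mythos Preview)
Your proposal is correct and follows exactly the paper's approach: apply (\ref{eq:PL333}) of Theorem \ref{thm:proj02} with $A=M_{[-n,\infty)}^X$, $B=M_{(-\infty,-1]}^X$ and $a=X_0^j$ (componentwise), and use (IPF) to identify $A\cap B$ with $M_{[-n,-1]}^X$. The paper's proof is the one-line version of what you wrote; your additional remarks on matching the summands to $W_n^{2k}$, $W_n^{2k+1}$ and on the harmless regrouping simply make explicit what the paper leaves to the reader.
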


\begin{proof}
The equality follows
from (IPF) and (\ref{eq:PL333}) in Theorem \ref{thm:proj02}
applied to $A=M_{[-n,\infty)}^X$, $B=M_{(-\infty,-1]}^X$ and $a=X_0^j$, $j=1,\dots,q$.
\end{proof}

Under (A), and for $n\in\mathbb{N}$ and $k=1,\dots,n$,
the \textit{forward and backward finite predictor coefficients}\/
$\phi_{n,k}\in \C^{q\times q}$ and $\tilde{\phi}_{n,k}\in \C^{q\times q}$, respectively,
of a $q$-variate stationary process $\{X_k\}$ are defined by
\begin{align}
&P_{[-n,-1]}X_0=\phi_{n,1}X_{-1}+\cdots+\phi_{n,n}X_{-n},
\label{eq:forward-phi123}\\
&P_{[-n,-1]}X_{-(n+1)}
=\tilde\phi_{n,1}X_{-n}+\cdots+\tilde\phi_{n,n}X_{-1}.
\label{eq:backward-phi123}
\end{align}

Recall 
$c_0$, $\tilde{c}_0$, $\beta_j$, $b_{n,j}^{2k}$ and $\tilde{b}_{n,j}^{2k}$ from 
(\ref{eq:MAAR111}), (\ref{eq:MAAR222}), (\ref{eq:beta-def667}), (\ref{recursion}) and (\ref{eq:recursion2}), respectively. 
Here is the representation theorem for $\phi_{n,n}$ and $\tilde\phi_{n,n}$, which are
closely related to the PACF of $\{X_k\}$.

\begin{theorem}\label{thm:pac947}
We assume {\rm (A)} and {\rm (IPF)}. Then, for $n\in\mathbb{N}$,
\[
\phi_{n,n}
=c_0\sum_{k=0}^\infty
\left(\sum_{j=0}^\infty
b_{n,j}^{2k}\beta_{n+j}\right)\tilde{c}_0^{-1},
\qquad
\tilde{\phi}_{n,n}
=\tilde{c}_0\sum_{k=0}^\infty
\left(\sum_{j=0}^\infty
\tilde{b}_{n,j}^{2k}\beta_{n+j}^*\right)c_0^{-1}.
\]
\end{theorem}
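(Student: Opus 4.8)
The plan is to compute $\phi_{n,n}$ by extracting, from the strongly convergent expansion $P_{[-n,-1]}^{\perp}X_0=\sum_{k=0}^{\infty}W_n^k$ of Proposition \ref{prop:basic-rep123}, the precise contribution that multiplies $X_{-n}$ in the finite predictor. First I would note that, by the defining relation (\ref{eq:forward-phi123}), $\phi_{n,n}$ is the coefficient of $X_{-n}$ in $P_{[-n,-1]}X_0=X_0-P_{[-n,-1]}^{\perp}X_0$. The standard way to isolate a single predictor coefficient in the multivariate case is to pair against the backward innovation: since $\{\tilde\xi_k^j:j=1,\dots,q,\ k\ge -(n-1)\}$ is a complete orthonormal basis of $(M_{[-n+1,\infty)}^X)^{\perp}$ while $\tilde\xi_{-n}$ (equivalently $\tilde\xi_{n+1}$ after re-indexing via the time-reversed process — here one must be careful with the index bookkeeping) is the one-dimensional "new" direction carrying $X_{-n}$ relative to $M_{[-n+1,-1]}^X$, we have the identity $\langle P_{[-n,-1]}X_0,\tilde\xi_{(\cdot)}\rangle = \phi_{n,n}\langle X_{-n},\tilde\xi_{(\cdot)}\rangle$. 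Concretely, I would compute $\langle P_{[-n,-1]}^{\perp}X_0,\tilde\xi_{n}\rangle$ or the analogous inner product and show it equals $-\langle P_{[-n,-1]}X_0,\tilde\xi\rangle$ plus a term from $X_0$ that vanishes by orthogonality.

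The technical engine is Proposition \ref{prop:forward638}: the even terms satisfy $W_n^{2k}=c_0\sum_{j=0}^{\infty}b_{n,j}^{2k}\xi_j$ and the odd terms $W_n^{2k+1}=c_0\sum_{j=0}^{\infty}b_{n,j}^{2k+1}\tilde\xi_{n+j+1}$. Summing the even blocks gives $\sum_{k=0}^{\infty}W_n^{2k}=c_0\sum_{j=0}^{\infty}\bigl(\sum_{k=0}^{\infty}b_{n,j}^{2k}\bigr)\xi_j$, and only these even blocks, together with $X_0=c_0\xi_0+\dots$ via (\ref{eq:MA428}), contribute to the part of $P_{[-n,-1]}X_0$ lying in $M_{(-\infty,-1]}^X\ominus M_{[-n,-1]}^X$ in a way that sees $X_{-n}$; the odd blocks live in $(M_{[-n,\infty)}^X)^{\perp}$ and are handled separately. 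Pairing $\sum_k W_n^{2k}$ with $\tilde\xi_{n}$ and using Lemma \ref{lem:beta-xi897}, namely $\langle\xi_j,\tilde\xi_k\rangle=-\beta_{j+k}$, produces $\langle W_n^{2k},\tilde\xi_{n}\rangle$-type quantities of the form $-c_0\sum_{j}b_{n,j}^{2k}\beta_{n+j}$. On the other side, $\langle X_{-n},\tilde\xi_{n}\rangle$ is computed from the backward MA representation (\ref{eq:MA428}) to be $\tilde c_0$ (the leading backward MA coefficient), so dividing on the right by $\tilde c_0$ — which is invertible since $\tilde c_0\tilde a_0=-I_q$ — yields $\phi_{n,n}=c_0\sum_{k=0}^{\infty}\bigl(\sum_{j=0}^{\infty}b_{n,j}^{2k}\beta_{n+j}\bigr)\tilde c_0^{-1}$.

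The backward formula for $\tilde\phi_{n,n}$ follows immediately by applying the forward formula to the time-reversed process $\{\tilde X_k\}$: under (A) and (IPF), $\{\tilde X_k\}$ also satisfies (A) and (IPF), its forward innovation is $\{\tilde\xi_k\}$, its backward innovation is $\{\xi_k\}$, its leading MA coefficients are $\tilde c_0$ and $c_0$ respectively, and — by (\ref{eq:spec628}) together with the definition (\ref{eq:beta-def667}) and the interchange of $h$ and $h_\sharp$ under time reversal — the associated $\beta$-sequence for $\{\tilde X_k\}$ is $\{\beta_k^*\}$ (this is already implicit in the second identity of Lemma \ref{lem:beta-xi897}), while the $b$-recursion (\ref{recursion}) with $\beta_k$ replaced by $\beta_k^*$ becomes exactly the $\tilde b$-recursion (\ref{eq:recursion2}). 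Substituting these substitutions into the forward formula gives the stated expression for $\tilde\phi_{n,n}$.

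I expect the main obstacle to be the index bookkeeping and the precise justification that only the even blocks $W_n^{2k}$ contribute the term proportional to $X_{-n}$ while the odd blocks do not — in other words, correctly identifying which orthonormal direction to pair against and verifying that the pairing isolates $\phi_{n,n}$ rather than a linear combination of several $\phi_{n,j}$. Two subsidiary points also need care: convergence of the double sum $\sum_k\sum_j b_{n,j}^{2k}\beta_{n+j}$, which should follow from the strong convergence in Proposition \ref{prop:basic-rep123} together with the boundedness of the Hankel operators $\Gamma_n$ from Lemma \ref{lem:beta-proper648} and the remark after it; and the clean verification that the $\beta$-sequence of the time-reversed process is $\{\beta_k^*\}$, so that the two recursions (\ref{recursion}) and (\ref{eq:recursion2}) really are time-reversed images of one another. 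None of these is deep, but getting the indices in $\beta_{n+j}$ (as opposed to $\beta_{n+j+1}$, which appears in the recursion) exactly right is where the argument must be done carefully.
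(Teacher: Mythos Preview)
Your proposal is correct and follows essentially the same route as the paper: pair $P_{[-n,-1]}^{\perp}X_0$ with $\tilde\xi_n$, use the expansion of Proposition~\ref{prop:basic-rep123} and the formula $W_n^{2k}=c_0\sum_j b_{n,j}^{2k}\xi_j$ from Proposition~\ref{prop:forward638} together with Lemma~\ref{lem:beta-xi897}, note that the odd blocks $W_n^{2k+1}$ are orthogonal to $\tilde\xi_n$ since they lie in $\cspn\{\tilde\xi_{n+j+1}:j\ge 0\}$, and identify the other side as $-\phi_{n,n}\tilde c_0$ via the backward MA representation; the backward formula then comes from time reversal exactly as you describe. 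The only slip is the index in your basis statement for $(M_{[-n+1,\infty)}^X)^{\perp}$ (it should be $\{\tilde\xi_k^j:k\ge n\}$, not $k\ge -(n-1)$), but you already flag this and use the correct index $\tilde\xi_n$ in the actual computation.
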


\begin{proof}
Since $P_{[-n,-1]}^{\perp} X_0\equiv-\phi_{n,n}X_{-n}
\equiv -\phi_{n,n}\tilde{c}_0\tilde{\xi}_{n}$
mod $M_{[-n+1,\infty)}^X$, we have
\[
\langle P_{[-n,-1]}^{\perp} X_0,\tilde{\xi}_{n}\rangle
=-\phi_{n,n}\,\tilde{c}_0\langle \tilde{\xi}_{n},\tilde{\xi}_{n}\rangle
=-\phi_{n,n}\,\tilde{c}_0.
\]
On the other hand, from Propositions \ref{prop:basic-rep123} and
\ref{prop:forward638} and Lemma \ref{lem:beta-xi897}, we get
\[
\langle P_{[-n,-1]}^{\perp} X_0, \tilde{\xi}_{n}\rangle
=\sum_{k=0}^\infty \langle W_n^{2k}, \tilde{\xi}_{n}\rangle
=-c_0\sum_{k=0}^{\infty}
\left(\sum_{j=0}^\infty b_{n,j}^{2k}\beta_{n+j}\right).
\]
Thus the first formula follows. We obtain the second formula by applying
the first one to the time-reversed process $\{\tilde{X}_k\}$,
\end{proof}

For $n=0,1,\dots$, we define
the \textit{forward and backward finite prediction error covariances}\/ $v_n$ and $\tilde{v}_n$,
respectively, of a $q$-variate stationary process $\{X_k\}$ by
$v_0=\tilde{v}_0=\langle X_0, X_0\rangle$ and
\begin{align}
&v_n:=\langle P_{[-n,-1]}^{\perp} X_{0}, P_{[-n,-1]}^{\perp} X_{0}\rangle,\qquad n=1,2,\dots,
\label{eq:var459}\\
&\tilde{v}_n:=\langle P_{[-n,-1]}^{\perp} X_{-(n+1)}, P_{[-n,-1]}^{\perp} X_{-(n+1)}\rangle,
\qquad n=1,2,\dots.
\label{eq:var-tilde777}
\end{align}
Notice that $\tilde{v}_n$ (resp., $v_n$) is the forward (resp., backward)
finite prediction error covariance of the time-reversed process $\{\tilde{X}_k\}$.
In this paper, under (A), we fix the definition of the \textit{partial autocorrelation function}\/ (PACF)
$\alpha_n$ of $\{X_k\}$ by
\[
\alpha_n:=
\begin{cases}
(v_0)^{-1/2} \langle X_0, X_{-1}\rangle (\tilde{v}_0)^{-1/2}, & n=1,\\
(v_{n-1})^{-1/2}
\langle P_{[-n+1,-1]}^\perp X_0,P_{[-n+1,-1]}^\perp X_{-n}\rangle
(\tilde{v}_{n-1})^{-1/2}, & n = 2, 3, \dots
\end{cases}
\]
(cf.\ \cite{De}).

The next theorem gives explicit representations for $v_n$, $\tilde{v}_n$ and $\alpha_n$.

\begin{theorem}\label{thm:cov564}
We assume {\rm (A)} and {\rm (IPF)}. Then, for $n\in\mathbb{N}$, we have
\begin{align}
&v_n
= c_0\left( \sum_{k=0}^{\infty} b_{n,0}^{2k}\right)c_0^*, \qquad
\tilde{v}_n
= \tilde{c}_0\left( \sum_{k=0}^{\infty} \tilde{b}_{n,0}^{2k}\right)\tilde{c}_0^*,
\label{eq:cov111}\\
&\langle P_{[-n+1,-1]}^\perp X_0,P_{[-n+1,-1]}^\perp X_{-n}\rangle
= c_0\left( \sum_{k=0}^{\infty} b_{n,0}^{2k+1}\right)\tilde{c}_0^*.
\label{eq:cov333}
\end{align}
\end{theorem}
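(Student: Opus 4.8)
The plan is to mimic the derivation of Theorem~\ref{thm:pac947}, replacing the single inner product with $\tilde{\xi}_n$ by the full Gram-matrix computation, and exploiting the decomposition $P_{[-n,-1]}^{\perp}X_0=\sum_{k}W_n^k$ from Proposition~\ref{prop:basic-rep123}. Throughout I take $A=M_{[-n,\infty)}^X$ and $B=M_{(-\infty,-1]}^X$, so that $W_n^{2k}=P_B^{\perp}(P_A^{\perp}P_B^{\perp})^kX_0$ and $W_n^{2k+1}=-(P_A^{\perp}P_B^{\perp})^{k+1}X_0$.

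For the first identity in (\ref{eq:cov111}): by Proposition~\ref{prop:basic-rep123}, $v_n=\langle\sum_k W_n^k,\sum_l W_n^l\rangle$, and I would invoke the orthogonality structure behind Theorem~\ref{thm:proj02} — specifically the computation in (\ref{eq:PL444b})–(\ref{eq:PL444}) — to collapse the double sum. Applying (\ref{eq:PL444}) with $a_1=a_2=X_0^j$ shows $\langle P_{A\cap B}^{\perp}X_0,P_{A\cap B}^{\perp}X_0\rangle=\sum_{k=0}^{\infty}\langle W_n^{2k},X_0\rangle$ (the odd terms reorganize into even-indexed Gram matrices, exactly as in the proof of Proposition~\ref{prop:positiv528}). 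Then the first equality in (\ref{eq:expan222}) gives $\langle W_n^{2k},X_0\rangle=c_0 b_{n,0}^{2k}c_0^*$, and summing over $k$ yields $v_n=c_0(\sum_k b_{n,0}^{2k})c_0^*$. The second equality in (\ref{eq:cov111}) follows by applying the first to the time-reversed process $\{\tilde{X}_k\}$, using that $\tilde{v}_n$ is the forward error covariance of $\{\tilde{X}_k\}$ and that the roles of $(c_0,b_{n,j}^k)$ and $(\tilde{c}_0,\tilde{b}_{n,j}^k)$ swap accordingly.

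For (\ref{eq:cov333}): here the relevant space is $M_{[-n+1,-1]}^X=M_{(-\infty,-1]}^X\cap M_{[-n+1,\infty)}^X$, so I would rerun the machinery with $A=M_{[-n+1,\infty)}^X$, $B=M_{(-\infty,-1]}^X$; the associated $W$-sequence is the one indexed by $n-1$, i.e.\ $W_{n-1}^k$. Since $X_{-n}\in M_{[-n,\infty)}^X\subset B$ and $X_0\in B$ as well — wait, more carefully: $X_0$ lies in neither, but $P_{A\cap B}^{\perp}X_{-n}$ pairs against $X_0$ via the mixed formula (\ref{eq:PL555}) of Theorem~\ref{thm:proj02} with $b=X_{-n}\in A$... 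I would in fact use the version of (\ref{eq:PL555})/(\ref{eq:PL444}) that expresses $\langle P_{A\cap B}^{\perp}X_0,P_{A\cap B}^{\perp}X_{-n}\rangle$ as $\sum_{k}\langle W_{n-1}^{2k+1},X_{-n}\rangle$, the odd index appearing because $X_{-n}$ sits in $A$ but not $B$ (compare the bookkeeping in the proof of Proposition~\ref{prop:positiv528}, where even/odd corresponds to which of $A,B$ is hit last). Then the second equality in (\ref{eq:expan222}) — namely $\langle W_{m}^{2k+1},X_{-(m+1)}\rangle=c_0 b_{m,0}^{2k+1}\tilde{c}_0^*$ — with $m=n-1$ gives $\langle W_{n-1}^{2k+1},X_{-n}\rangle=c_0 b_{n-1,0}^{2k+1}\tilde{c}_0^*$, and summing yields the claimed formula (with $b_{n,0}^{2k+1}$ understood, after the reindexing, as $b_{n-1,0}^{2k+1}$ — I should double-check the paper's indexing convention here, since the statement writes $b_{n,0}^{2k+1}$).

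The main obstacle I anticipate is precisely this indexing and even/odd bookkeeping in (\ref{eq:cov333}): getting the reduction $\langle P^{\perp}X_0,P^{\perp}X_{-n}\rangle=\sum_k c_0 b_{\bullet,0}^{2k+1}\tilde{c}_0^*$ right requires care about (a) which $W$-sequence (index $n$ vs.\ $n-1$) is in play, (b) that $X_{-n}$ is expressed via the backward MA representation $X_{-n}=\tilde{c}_0\tilde{\xi}_{n}+(\text{terms in }M_{[-n+1,\infty)}^X)$ so that only the $\tilde{\xi}_n$-component survives the pairing, and (c) matching the odd-indexed $W^{2k+1}$ (which lives in the $\tilde{\xi}$-span by (\ref{eq:expan111})) against that component. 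The convergence of all sums is inherited from Propositions~\ref{prop:basic-rep123} and~\ref{prop:forward638} together with the $\ell_{2+}^{q\times q}$ membership in Lemma~\ref{lem:beta-proper648}, so no separate analytic work is needed there.
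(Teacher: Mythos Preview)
Your approach is essentially the paper's: for (\ref{eq:cov111}) it is identical (apply (\ref{eq:PL444}) with $A=M_{[-n,\infty)}^X$, $B=M_{(-\infty,-1]}^X$, then use (\ref{eq:expan222}) and time-reverse for $\tilde v_n$); for (\ref{eq:cov333}) the paper does not shift to $n-1$ but keeps $A=M_{[-n,\infty)}^X$ and applies (\ref{eq:PL555}) with $a=X_0\in A$, $b=X_{-(n+1)}\in B$, obtaining $\langle P_{[-n,-1]}^{\perp}X_0,P_{[-n,-1]}^{\perp}X_{-(n+1)}\rangle=\sum_k\langle W_n^{2k+1},X_{-(n+1)}\rangle$ and then invoking (\ref{eq:expan222}). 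This is your argument up to the reindex $n\leftrightarrow n-1$, and your suspicion about the indexing is correct: as displayed, the left side of (\ref{eq:cov333}) and the subscript $n$ in $b_{n,0}^{2k+1}$ are off by one relative to each other. One small slip in your write-up: with $A=M_{[-n+1,\infty)}^X$ and $B=M_{(-\infty,-1]}^X$ you have $X_{-n}\in B$ (not $A$) and $X_0\in A$, so the roles $a\in A$, $b\in B$ in (\ref{eq:PL555}) are exactly as needed.
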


\begin{proof}
First, by (IPF) and (\ref{eq:PL444}) in Theorem \ref{thm:proj02}
applied to $A=M_{[-n,\infty)}^X$, $B=M_{(-\infty,-1]}^X$ and
$a_1=X_0^i$, $a_2=X_0^j\ (i,j=1,\dots,q)$, we have
$v_n
= \sum_{k=0}^{\infty}\langle W_n^{2k}, X_0\rangle$.
This and (\ref{eq:expan222}) give the first equality in (\ref{eq:cov111}).
Next, we obtain the second equality in (\ref{eq:cov111}) by applying
the first one to the time-reversed process $\{\tilde{X}_k\}$.
Finally, by (IPF) and (\ref{eq:PL555}) in Theorem \ref{thm:proj02}
applied to $A=M_{[-n,\infty)}^X$, $B=M_{(-\infty,-1]}^X$ and
$a=X_0^i$, $b=X_{-(n+1)}^j\ (i,j=1,\dots,q)$, we have
\[
\langle P_{[-n,-1]}^{\perp} X_{0}, P_{[-n,-1]}^{\perp} X_{-(n+1)}\rangle
= \sum_{k=0}^{\infty}\langle W_n^{2k+1}, X_{-(n+1)}\rangle.
\]
This and (\ref{eq:expan222}) give (\ref{eq:cov333}).
\end{proof}

We can prove
\[
\langle P_{[-n+1,-1]}^\perp X_0,P_{[-n+1,-1]}^\perp X_{-n}\rangle
= \phi_{n,n}\tilde{v}_{n-1},\qquad n=2,3,\dots,
\]
in the same way as in the univariate case
(cf. Corollary 5.2.1 in \cite{BD}). From this,
we have
\begin{equation}
\alpha_n = (v_{n-1})^{-1/2} \phi_{n,n} (\tilde{v}_{n-1})^{1/2},\qquad n=1,2,\dots,
\label{eq:pacf-pha428}
\end{equation}
and so Theorem \ref{thm:pac947} with (\ref{eq:cov111}) in
Theorem \ref{thm:cov564} gives another explicit representation of $\alpha_n$.

We turn to the representation of all the finite predictor coefficients
$\phi_{n,j}$ and $\tilde{\phi}_{n,j}$. It turns out that, to deal with this problem,
we need to assume the minimality (M)
which is more stringent than (IPF) or (CND).
A $q$-variate stationary process $\{X_k\}$ satisfying (M) has a
dual process $\{X'_k: k\in\Z\}$, characterized by the biorthogonality relation
$\langle X_j,X'_k\rangle=\delta_{jk}I_q$; see \cite{M60} for more information. 
Recall $a_k$ and $\tilde{a}_k$ from (\ref{eq:MAAR111}) and (\ref{eq:MAAR222}), respectively. 
The dual process $\{X'_k\}$ admits the following two MA representations:
\begin{equation}
X'_n=-\sum_{k=0}^\infty a_k^* \xi_{n+k},
\qquad
X'_{-n}=-\sum_{k=0}^\infty \tilde{a}_k^* \tilde{\xi}_{n+k},\qquad n\in\Z.
\label{eq:MAdual872}
\end{equation}
Here notice that (M) implies
\begin{equation}
\{a_k\}, \{\tilde{a}_k\}\in \ell_{2+}^{q\times q}.
\label{eq:square-integrable111}
\end{equation}
By (\ref{eq:square-integrable111}),
we can also define, for $n\in\mathbb{N}$ and $k, j\in\mathbb{N}\cup\{0\}$,
\begin{align*}
&\phi_{n,j}^{2k}
:=c_0\sum_{l=0}^\infty b_{n,l}^{2k}
a_{j+l}.
\qquad
\phi_{n,j}^{2k+1}
:=c_0\sum_{l=0}^\infty b_{n,l}^{2k+1}
\tilde a_{j+l},\\
&\tilde\phi_{n,j}^{2k}
:=\tilde{c}_0\sum_{l=0}^\infty\tilde b_{n,l}^{2k}
\tilde a_{j+l},
\qquad
\tilde\phi_{n,j}^{2k+1}
:=\tilde{c}_0\sum_{l=0}^\infty\tilde b_{n,l}^{2k+1}
a_{j+l}.
\end{align*}

Here is the representation theorem for the finite predictor coefficients.

\begin{theorem}\label{thm:fpc548}
We assume {\rm (M)}. Then, for $n=1,2,\dots$ and $j=1,\dots,n$,
\begin{equation}
\phi_{n,j}
=\sum_{k=0}^\infty\{\phi_{n,j}^{2k}+\phi_{n,n-j+1}^{2k+1}\},
\qquad
\tilde\phi_{n,j}
=\sum_{k=0}^\infty
\{\tilde\phi_{n,j}^{2k}+\tilde\phi_{n,n-j+1}^{2k+1}\}.
\label{eq:phi-rep}
\end{equation}
\end{theorem}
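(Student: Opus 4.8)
The plan is to mimic the proof of Theorem~\ref{thm:pac947}, but now extracting \emph{all} the predictor coefficients $\phi_{n,j}$ rather than just the last one. Starting from Proposition~\ref{prop:basic-rep123}, we have $P_{[-n,-1]}^{\perp}X_0=\sum_{k=0}^{\infty}W_n^k$, with $W_n^{2k}$ and $W_n^{2k+1}$ expanded in the forward innovations $\{\xi_j\}$ and backward innovations $\{\tilde\xi_{n+j+1}\}$ respectively via Proposition~\ref{prop:forward638}. The key observation is that the coefficients $\phi_{n,j}$ can be recovered by pairing $P_{[-n,-1]}^{\perp}X_0$ against the dual process: since $\langle X_m,X'_k\rangle=\delta_{mk}I_q$, and $P_{[-n,-1]}^{\perp}X_0=X_0-\sum_{j=1}^n\phi_{n,j}X_{-j}$, we obtain $\langle P_{[-n,-1]}^{\perp}X_0, X'_{-j}\rangle = -\phi_{n,j}$ for $j=1,\dots,n$ (and $=I_q$ for $j=0$). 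So the first step is to justify this duality pairing under (M), invoking the existence of $\{X'_k\}$ and the representations (\ref{eq:MAdual872}).

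Next I would compute $\langle W_n^{2k}, X'_{-j}\rangle$ and $\langle W_n^{2k+1}, X'_{-j}\rangle$ for $1\le j\le n$. For the even terms, using $W_n^{2k}=c_0\sum_{l\ge 0}b_{n,l}^{2k}\xi_l$ and the first representation in (\ref{eq:MAdual872}), $X'_{-j}=-\sum_{m\ge 0}a_m^*\xi_{-j+m}$; the inner product picks out the indices where $-j+m=l\ge 0$, i.e. $m=j+l$, giving $\langle W_n^{2k},X'_{-j}\rangle=-c_0\sum_{l\ge 0}b_{n,l}^{2k}a_{j+l}=-\phi_{n,j}^{2k}$. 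For the odd terms, $W_n^{2k+1}=c_0\sum_{l\ge 0}b_{n,l}^{2k+1}\tilde\xi_{n+l+1}$ and the second representation $X'_{-j}=-\sum_{m\ge 0}\tilde a_m^*\tilde\xi_{j+m}$; now one matches $j+m=n+l+1$, i.e. $m=n+1-j+l\ge 0$ (which holds since $j\le n$), giving $\langle W_n^{2k+1},X'_{-j}\rangle=-c_0\sum_{l\ge 0}b_{n,l}^{2k+1}\tilde a_{n+1-j+l}=-\phi_{n,n-j+1}^{2k+1}$. Summing over $k$ via Proposition~\ref{prop:basic-rep123} then yields $-\phi_{n,j}=-\sum_{k\ge0}\{\phi_{n,j}^{2k}+\phi_{n,n-j+1}^{2k+1}\}$, which is the first formula in (\ref{eq:phi-rep}); the second follows by applying the first to the time-reversed process $\{\tilde X_k\}$.

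The main obstacle I anticipate is twofold. First, one must carefully verify that the strong convergence in Proposition~\ref{prop:basic-rep123} (in $M^q$) can be interchanged with the inner product against $X'_{-j}$, and that the series $\sum_k\{\phi_{n,j}^{2k}+\phi_{n,n-j+1}^{2k+1}\}$ converges absolutely in $\C^{q\times q}$; this should follow from Cauchy--Schwarz together with the $\ell_{2+}^{q\times q}$ membership of $\{b_{n,l}^{k}\}_l$ (Lemma~\ref{lem:beta-proper648}), $\{a_k\},\{\tilde a_k\}$ (from (\ref{eq:square-integrable111})), and the contraction-type decay of $\|W_n^k\|_{M^q}$ coming from the alternating projections (the norms $\|W_n^{k}\|$ are nonincreasing in $k$ and in fact summable under (IPF), which (M) implies). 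Second, some care is needed in the index bookkeeping: the shift $\tilde\xi_{n+j+1}$ in $W_n^{2k+1}$ versus $\tilde\xi_{j+m}$ in $X'_{-j}$ is exactly what produces the reflected index $n-j+1$, and the constraint $1\le j\le n$ is precisely what makes the relevant exponent $n+1-j\ge 1>0$ so that no out-of-range innovations are pulled in. The rest is routine, and the proof closes by the time-reversal symmetry already exploited repeatedly in Section~\ref{sec:5}.
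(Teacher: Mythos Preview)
Your proposal is correct and follows essentially the same route as the paper's own proof: pair $P_{[-n,-1]}^{\perp}X_0$ with the dual process $X'_{-j}$ via biorthogonality to extract $-\phi_{n,j}$, expand using Proposition~\ref{prop:basic-rep123}, compute $\langle W_n^{2k},X'_{-j}\rangle$ and $\langle W_n^{2k+1},X'_{-j}\rangle$ from Proposition~\ref{prop:forward638} and the two dual MA representations (\ref{eq:MAdual872}), and finish by time reversal. Your worry about interchanging the sum with the inner product is harmless---strong convergence in $M^q$ makes this automatic---so the only real content is the index matching, which you have handled correctly.
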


\begin{proof}
From $\langle X_k,X'_j\rangle=\delta_{kj}I_q$, we have
$\langle P_{[-n,-1]}^\perp X_0, X'_{-j}\rangle = -\phi_{n,j}$ for $j=1,\dots,n$,
and, from Proposition \ref{prop:basic-rep123}, we find that
\[
\langle P_{[-n,-1]}^\perp X_0, X'_{-j}\rangle
=\sum_{k=0}^{\infty}\{ \langle W_n^{2k}, X'_{-j}\rangle + \langle W_n^{2k+1}, X'_{-j}\rangle\}.
\]
Moreover, from Proposition \ref{prop:forward638} and
(\ref{eq:MAdual872}) rewritten as
\[
X'_{-j}=-\sum_{l=-j}^\infty a_{j+l}^* \xi_l,
\qquad
X'_{-j}=-\sum_{l=-(n-j+1)}^\infty
\tilde{a}_{n-j+l+1}^* \tilde{\xi}_{n+l+1},
\]
we have
\[
\begin{aligned}
&\langle W_n^{2k},X'_{-j}\rangle
=-c_0\sum_{l=0}^\infty b_{n,l}^{2k}a_{j+l}
=-\phi_{n,j}^{2k},\\
&\langle W_n^{2k+1},X'_{-j}\rangle
=-c_0\sum_{l=0}^\infty
b_{n,l}^{2k+1}\tilde a_{n-j+l+1}
=-\phi_{n,n-j+1}^{2k+1}.
\end{aligned}
\]
Combining, we obtain the first equality in (\ref{eq:phi-rep}).
Its second equality follows from the first one
applied to the time-reversed process $\{\tilde{X}_k\}$.
\end{proof}


\section{Applications to long-memory processes}\label{sec:6}

In this section, we apply the representation theorems in Section \ref{sec:5} to a
$q$-variate FARIMA process with common fractional differencing order for all components and derive the
asymptotics of the finite prediction error covariances and the PACF as well as
that of the finite predictor coefficients, and establish Baxter's inequality.


\subsection{Univariate FARIMA processes}

We start with some properties of univariate FARIMA$(0,d,0)$ processes which we need in
our perturbation technique below. This technique reduces the study of asymptotic properties of
multivariate FARIMA processes to that
of the corresponding problems for univariate FARIMA$(0,d,0)$ processes.

For $d\in (-1/2, 1/2)\setminus \{0\}$, let $\{Y_k: k\in\Z\}$ be a univariate FARIMA$(0,d,0)$ process
with spectral density
\begin{equation}
w_Y(e^{i\theta}) = |1-e^{i\theta}|^{-2d},\qquad \theta\in (-\pi,\pi)
\label{eq:spY752}
\end{equation}
(see \cite{GJ, Ho}; see also \cite[\S 13.2]{BD}).
Then the variance $u_0:=E[\vert Y_0\vert^2]$ is equal to
$\Gamma(1-2d)/\Gamma(1-d)^2$ and the
$n$-th finite predictor coefficients $\psi_{n,n}$ of $\{Y_k\}$ (see (\ref{eq:forward-phi123})) 
are given by
\begin{equation}
\psi_{n,n} = \frac{d}{n-d},\qquad n\in\N.
\label{eq:YPACF857}
\end{equation}
Let $u_n$ be the finite prediction error variance of $\{Y_k\}$ defined by
(\ref{eq:var459}) with $\{X_k\}$ replaced by $\{Y_k\}$, for which we use the
notation $u_n$ rather than $v_n$. Then the
Durbin--Levinson algorithm implies $u_n=u_0\prod_{k=1}^n\{1 - (\psi_{k,k})^2\}$,
whence
\begin{equation}
u_n = \frac{\Gamma(n+1-2d)\Gamma(n+1)}{\Gamma(n+1-d)^2},\qquad n=0,1,\dots.
\label{eq:unifarimacov857}
\end{equation}

For $u_n$, we present next its precise asymptotic behavior.

\begin{proposition}\label{prop:arima-cov659}
For $d\in (-1/2, 1/2)\setminus \{0\}$, we have
$u_n = 1 + (d^2/n) + O(n^{-2})$ as $n\to\infty$.
\end{proposition}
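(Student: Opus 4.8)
The plan is to derive the asymptotic expansion directly from the closed-form expression
\[
u_n = \frac{\Gamma(n+1-2d)\,\Gamma(n+1)}{\Gamma(n+1-d)^2},\qquad n=0,1,\dots,
\]
given in (\ref{eq:unifarimacov857}), using the standard asymptotics of ratios of Gamma functions. The key tool is the classical expansion
\[
\frac{\Gamma(n+a)}{\Gamma(n+b)} = n^{a-b}\left(1 + \frac{(a-b)(a+b-1)}{2n} + O(n^{-2})\right),
\qquad n\to\infty,
\]
which follows from Stirling's formula (see, e.g., \cite[\S 13.2]{BD} or standard references). First I would apply this with the two relevant pairs of parameters to get
\[
\frac{\Gamma(n+1-2d)}{\Gamma(n+1-d)} = n^{-d}\left(1 + \frac{(-d)(1-3d)}{2n} + O(n^{-2})\right)
\]
and
\[
\frac{\Gamma(n+1)}{\Gamma(n+1-d)} = n^{d}\left(1 + \frac{d(d-1)}{2n}\cdot\frac{1}{1} + O(n^{-2})\right),
\]
where in the second one I take $a=1$, $b=1-d$, so $a-b=d$ and $a+b-1=1-d$, giving the $1/n$ coefficient $d(1-d)/2$.

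Next I would multiply these two expansions, since $u_n$ is exactly their product. The leading powers cancel: $n^{-d}\cdot n^{d}=1$, so $u_n\to 1$, consistent with the known fact that the one-step infinite prediction error variance of the FARIMA$(0,d,0)$ innovations is normalized to $1$ here. Collecting the $1/n$ terms, the coefficient is
\[
\frac{-d(1-3d)}{2} + \frac{d(1-d)}{2} = \frac{d}{2}\bigl(-(1-3d) + (1-d)\bigr) = \frac{d}{2}\cdot 2d = d^2,
\]
and all remaining contributions are $O(n^{-2})$. This yields exactly $u_n = 1 + d^2/n + O(n^{-2})$, as claimed.

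The only genuinely delicate point is bookkeeping the error terms: one must be sure that the product of two expansions, each with an $O(n^{-2})$ remainder, still has an $O(n^{-2})$ remainder, which is immediate since the cross terms are $O(n^{-1})\cdot O(n^{-1}) = O(n^{-2})$ and the leading factors are bounded. I do not expect any real obstacle here; the statement is essentially a one-line corollary of Stirling's formula applied to (\ref{eq:unifarimacov857}). An alternative, slightly more self-contained route would bypass the Gamma-ratio expansion and instead use the Durbin--Levinson recursion $u_n = u_{n-1}\{1 - (\psi_{n,n})^2\}$ together with $\psi_{n,n} = d/(n-d)$ from (\ref{eq:YPACF857}): then $\log u_n = \log u_0 + \sum_{k=1}^n \log\{1 - d^2/(k-d)^2\}$, and since $\sum_{k=1}^\infty \log\{1-d^2/(k-d)^2\}$ converges with tail $\sum_{k>n} d^2/k^2 + O(n^{-2}) = d^2/n + O(n^{-2})$, while $u_\infty = 1$ (again from the product telescoping to $\Gamma(1-2d)/\Gamma(1-d)^2 \cdot \lim$ or directly), one recovers $u_n = 1 + d^2/n + O(n^{-2})$; but the Gamma-function computation is cleaner and I would present that one.
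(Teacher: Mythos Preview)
Your proposal is correct and takes essentially the same approach as the paper: both derive the expansion from the closed form (\ref{eq:unifarimacov857}) via Stirling's formula. Your route through the standard Gamma-ratio expansion $\Gamma(n+a)/\Gamma(n+b) = n^{a-b}\bigl(1 + (a-b)(a+b-1)/(2n) + O(n^{-2})\bigr)$ is in fact a bit more direct than the paper's, which rewrites $u_n$ in terms of factors of the form $(1 \pm d/(n+1-d))^{n+1-d}$ and then expands those using $(1+a/x)^x = e^a - a^2 e^a/(2x) + O(x^{-2})$.
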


\begin{proof}
By Stirling's formula
$\Gamma(x) = \sqrt{2\pi}e^{-x}x^{x+(1/2)}\{1 + (1/12x) + O(x^{-2})\}$ as $x\to\infty$ and
\[
\left(\frac{n+1-2d}{n+1}\right)^{-d} = 1 + \frac{2d^2}{n} + O(n^{-2}),\ \ \
\frac{\sqrt{(n+1-2d)(n+1)}}{(n+1-d)} = 1 + O(n^{-2})
\]
as $n\to\infty$, we have, as $n\to\infty$,
\[
\begin{aligned}
u_n&=\frac{\Gamma(n+1-2d)\Gamma(n+1)}{\Gamma(n+1-d)^2} \\
&= \left( 1 - \frac{d}{n+1-d} \right)^{n+1-d}
 \left( 1 + \frac{d}{n+1-d} \right)^{n+1-d} \left\{ 1 + \frac{2d^2}{n} + O(n^{-2}) \right\}.
\end{aligned}
\]
On the other hand, by l'Hopital's rule, we have, for $a\in\mathbb{R}$,
\[
\left( 1 + \frac{a}{x}\right)^x = e^a -\frac{a^2e^a}{2x} + O(x^{-2}),\qquad x\to\infty,
\]
whence, as $n\to\infty$,
\[
\left( 1 - \frac{d}{n+1-d} \right)^{n+1-d}
 \left( 1 + \frac{d}{n+1-d} \right)^{n+1-d}
=\left\{ 1 - \frac{d^2}{n} + O(n^{-2}) \right\}.
\]
Combining, we obtain the proposition.
\end{proof}

Since
\[
1 - e^{i\theta} =
\begin{cases}
\vert 1 - e^{i\theta} \vert e^{(i/2)(\theta - \pi)} & \mbox{if $0<\theta<\pi$},\\
\vert 1 - e^{i\theta} \vert e^{(i/2)(\theta + \pi)} & \mbox{if $-\pi<\theta<0$},
\end{cases}
\]
the phase function
\[
\Omega(e^{i\theta}):=\overline{(1 - e^{i\theta})^{-d}}/( 1- e^{i\theta})^{-d}
\]
of the univariate FARIMA$(0,d,0)$ process $\{Y_k\}$ above is given by
\begin{equation}
\Omega(e^{i\theta})=
\begin{cases}
e^{id(\theta - \pi)} & \mbox{if $0<\theta<\pi$},\\
e^{id(\theta + \pi)} & \mbox{if $-\pi<\theta<0$}.
\end{cases}
\label{eq:phase-ft565}
\end{equation}
Therefore, the minus of the Fourier coefficients of the phase function $\Omega(e^{i\theta})$
for $\{Y_k\}$, which we write as $\rho_n$ rather than $\beta_n$, are given by
\begin{equation}
\rho_n
=-\int_{-\pi}^{\pi}e^{-in\theta}\Omega(e^{i\theta})\frac{d\theta}{2\pi}
=\frac{\sin(\pi d)}{\pi(n-d)},\qquad n\in\Z.
\label{eq:defrho873}
\end{equation}
One can also obtain (\ref{eq:defrho873}) using \cite[Remark 1 and Lemma 4.4]{BIK}.

\begin{lemma}\label{lem:conv326}
Let $\{s_k\}_{k=-\infty}^{\infty}$ be a complex sequence such that
$\sum_{k=-\infty}^\infty k^2|s_k|<\infty$. Then, we have
\[
\lim_{n\to\infty}n
\left(\rho_n^{-1}\sum_{k=-\infty}^{\infty} \rho_{n-k}s_k
-\sum_{k=-\infty}^{\infty} s_k\right)
=\sum_{k=-\infty}^{\infty} ks_k.
\]
\end{lemma}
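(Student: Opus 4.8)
The plan is to exploit the explicit formula $\rho_n=\sin(\pi d)/(\pi(n-d))$ from (\ref{eq:defrho873}), which makes the ratio $\rho_{n-k}/\rho_n$ a simple rational function of $n$, $k$ and $d$. Writing
\[
\rho_n^{-1}\sum_{k=-\infty}^{\infty}\rho_{n-k}s_k-\sum_{k=-\infty}^{\infty}s_k
=\sum_{k=-\infty}^{\infty}\left(\frac{\rho_{n-k}}{\rho_n}-1\right)s_k,
\]
and computing
\[
\frac{\rho_{n-k}}{\rho_n}-1=\frac{n-d}{n-k-d}-1=\frac{k}{n-k-d},
\]
so that
\[
n\left(\frac{\rho_{n-k}}{\rho_n}-1\right)=\frac{nk}{n-k-d}\longrightarrow k
\qquad(n\to\infty)
\]
for each fixed $k$. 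Thus the claimed limit is formally $\sum_k k\,s_k$; the whole issue is justifying the interchange of the limit $n\to\infty$ with the infinite sum over $k$.

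The key step is therefore a dominated-convergence argument for series. First I would fix a threshold, say restrict attention to those $n$ with $n\ge 2(|d|+1)$ and split the sum over $k$ at $|k|\le n/2$ and $|k|>n/2$. On the range $|k|\le n/2$ we have $|n-k-d|\ge n/2-|d|\ge n/4$ (for $n$ large), so
\[
\left|\frac{nk}{n-k-d}\,s_k\right|\le 4\,|k|\,|s_k|,
\]
which is summable by hypothesis ($\sum|k|\,|s_k|\le\sum k^2|s_k|<\infty$), giving a $k$-uniform, $n$-uniform dominating function; dominated convergence then yields convergence of this partial sum to $\sum_k k\,s_k$. On the tail range $|k|>n/2$, I would bound $|n-k-d|$ from below by a constant times $|k|$ when $|k|$ is large (precisely, for $|k|\ge 2(|d|+1)$ one has $|n-k-d|\ge|k|-n-|d|$ is not immediately useful, so instead use $|n-k-d|\ge|k-d|-n\ge|k|/2-|d|-n$, which is awkward) — cleaner is to note $nk/(n-k-d)=k/(1-(k+d)/n)$ and, on $|k|>n/2$, crudely estimate $|nk/(n-k-d)|\le n|k|/|k-n-d|$; the robust route is simply $|n/(n-k-d)|\le C$ fails here, so instead bound the tail contribution by $\sum_{|k|>n/2}(\text{something})\,|s_k|$ using $|nk/(n-k-d)|\le n|k|\le 2k^2$ once $|k|>n/2$, giving tail $\le 2\sum_{|k|>n/2}k^2|s_k|\to 0$.

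The main obstacle is thus purely the tail estimate: controlling $nk/(n-k-d)$ when $k$ is comparable to or larger than $n$, where the denominator is not bounded below by a multiple of $n$. The trick that resolves it is the observation just made, namely that for $|k|>n/2$ we may replace the factor $n$ by $2|k|$ and absorb everything into the convergent tail $\sum_{|k|>n/2}k^2|s_k|$, which tends to $0$ as $n\to\infty$ by the assumed summability of $k^2|s_k|$; one must also check the denominator $n-k-d$ never vanishes for integer $k$ (true since $d\notin\Z$) so that every term is well-defined. Combining the two ranges gives the stated limit. I expect the write-up to be short, with the only care needed being the choice of the splitting point and a clean uniform lower bound for $|n-k-d|$ on the main range.
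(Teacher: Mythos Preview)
Your approach is essentially the paper's: compute $\rho_{n-k}/\rho_n-1=k/(n-k-d)$ explicitly and justify the limit--sum interchange by dominated convergence using the hypothesis $\sum k^2|s_k|<\infty$. Rather than splitting at $|k|=n/2$, the paper obtains a single dominating function by rewriting $nk/(n-k-d)=k+k(k+d)/(n-k-d)$ and observing that, for fixed $k$, the maximum of $|nk/(n-k-d)|$ over $n\in\Z$ is attained at one of $n=k-1,k,k+1$, giving $\sup_{n\in\N}|nk/(n-k-d)|\le ck^2$ directly.

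Your tail estimate has a gap: the claimed bound $|nk/(n-k-d)|\le n|k|$ requires $|n-k-d|\ge 1$, which fails at $k=n$ (there $|n-k-d|=|d|<1/2$), and $k=n$ does lie in your tail range $|k|>n/2$. The fix is immediate: since $n-k\in\Z$ and $0<|d|<1/2$, one always has $|n-k-d|\ge |d|$, hence $|nk/(n-k-d)|\le n|k|/|d|\le 2k^2/|d|$ on $|k|>n/2$, and the rest of your argument goes through unchanged.
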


\begin{proof}
Since $\rho_{n-k}/\rho_n=(n-d)/(n-k-d)$, we have
\begin{equation}
n\left(\rho_n^{-1}\sum_{k=-\infty}^{\infty} \rho_{n-k}s_k
-\sum_{k=-\infty}^{\infty} s_k\right)
=\sum_{k=-\infty}^{\infty}
\frac{nk s_k}{n-k-d}.
\label{eq:nkr447}
\end{equation}
For $k\in\Z$, the function $f_{k, d}: \Z\to [0,\infty)$ defined by
\[
f_{k, d}(n):=\left\vert\frac{nk}{n-k-d}\right\vert
=\left\vert k+\frac{k(k+d)}{n-(k+d)}\right\vert
\]
takes the maximum value at either $n=k-1, k$, or $k+1$, whence
\[
\max_{n\in\N} f_{k,d}(n)
\le \max\left\{
\frac{k(k-1)}{1+d},
\frac{k^2}{\abs{d}},
\frac{k(k+1)}{1-d}
\right\}\le ck^2
\]
for some $c\in (0,\infty)$.
Therefore, we have dominated convergence, as $n\to\infty$, on
the right of (\ref{eq:nkr447}), and the sum converges to
$\sum_{k=-\infty}^{\infty} ks_k$, as desired.
\end{proof}


\subsection{Multivariate FARIMA processes}
Let $\overline{\D}:=\{z\in\C: \vert z\vert\le 1\}$ be the closed unit disk in $\C$.
We consider the following condition for $g:\T\to \C^{q\times q}$:
\[
\begin{aligned}
&\mbox{the entries of $g(z)$ are rational functions in $z$ that have}\\
&\mbox{no poles on $\overline{\D}$, and $\det g$ has no zeros on $\overline{\D}$.}
\end{aligned}
\tag{C}
\]
The condition (C) implies that $g$ is an outer function in $H_2^{q\times q}(\T)$.

\begin{lemma}\label{lem:rational555}
For $g:\T\to \C^{q\times q}$ with {\rm (C)}\/, there exists $\tilde{g}: \T\to \C^{q\times q}$
that satisfies {\rm (C)}\/ and
\begin{equation}
g(e^{-i\theta})g(e^{-i\theta})^*=\tilde{g}(e^{i\theta})\tilde{g}(e^{i\theta})^*.
\label{eq:gtilde111}
\end{equation}
The function $\tilde{g}$ is uniquely determined from $g$ up to a constant unitary factor.
\end{lemma}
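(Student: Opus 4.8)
The plan is to construct $\tilde g$ explicitly via a spectral factorization and then verify that it inherits the rationality and zero-free properties from $g$. First I would set $p(e^{i\theta}) := g(e^{-i\theta})g(e^{-i\theta})^*$, a positive Hermitian matrix-valued function on $\T$ whose entries are rational functions of $e^{i\theta}$. Since $g$ satisfies (C), the entries of $g(e^{-i\theta})$ are rational in $e^{-i\theta}$ with no poles on $\overline{\D}$ and $\det g(e^{-i\theta})$ has no zeros on $\overline{\D}$; consequently $p$ is bounded and $\det p(e^{i\theta}) = \abs{\det g(e^{-i\theta})}^2 > 0$ on $\T$, so $p$ is a bounded, boundedly invertible, rational Hermitian positive matrix function on $\T$. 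Such a $p$ admits a right spectral factorization $p = \tilde g \tilde g^*$ with $\tilde g$ rational, having no poles on $\overline{\D}$ and $\det\tilde g$ with no zeros on $\overline{\D}$ — this is the classical rational spectral factorization lemma (it can be cited, e.g., along the lines of the matrix-valued outer factorization discussed in \cite[Chapter II]{Roz} together with the observation that rationality is preserved; alternatively one factors $\det p$ as a scalar rational outer function and peels off factors). This $\tilde g$ satisfies (C) and \eqref{eq:gtilde111} by construction.

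For uniqueness, suppose $\tilde g_1$ and $\tilde g_2$ both satisfy (C) and \eqref{eq:gtilde111}. Then $\tilde g_1\tilde g_1^* = \tilde g_2\tilde g_2^*$ on $\T$, so $U := \tilde g_2^{-1}\tilde g_1$ satisfies $UU^* = I_q$ a.e.\ on $\T$, i.e.\ $U$ is unitary-valued on $\T$. Because (C) makes each $\tilde g_j$ an outer function in $H_2^{q\times q}(\T)$ (as noted in the excerpt right after the statement of (C)), both $U$ and $U^{-1} = \tilde g_1^{-1}\tilde g_2$ are matrix-valued Hardy functions (their entries have no poles on $\overline{\D}$, being ratios of rational functions with the denominators zero-free there). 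A bounded analytic matrix function on $\D$ that is unitary on $\T$ with bounded analytic inverse must be constant: this follows from the maximum principle applied entrywise, or more cleanly from the scalar outer/inner factorization — $\det U$ is a scalar inner function that is also outer (since $\det\tilde g_1,\det\tilde g_2$ are both outer), hence a unimodular constant, forcing $U$ itself to be a constant unitary matrix. Therefore $\tilde g_1 = \tilde g_2 U^{-1}$ with $U$ a constant unitary, which is the claimed uniqueness up to a constant unitary factor.

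The main obstacle is the existence part — specifically, producing the factorization $p = \tilde g\tilde g^*$ while \emph{simultaneously} keeping $\tilde g$ rational and verifying the pole/zero conditions on $\overline{\D}$. The outer factorization of a positive integrable matrix function always exists abstractly (as used in \eqref{eq:decomp888}), but showing that the outer factor of a \emph{rational} $p$ is again rational requires an argument: one standard route is to write $\det p(e^{i\theta})$ as a rational function of $e^{i\theta}$, pair up its zeros and poles inside and outside $\D$ using the Hermitian symmetry $p(e^{i\theta})^* = p(e^{i\theta})$ (which forces zeros/poles to come in pairs $z_0, 1/\bar z_0$), extract a scalar rational outer function, and then inductively reduce the matrix size; alternatively one invokes a known rational spectral factorization theorem from linear systems theory. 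I would present whichever of these is cleanest given the references the paper already uses, keeping the focus on the two properties in (C) rather than on re-deriving outer factorization from scratch.
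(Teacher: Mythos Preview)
Your proposal is correct and takes essentially the same approach as the paper: both reduce the lemma to the classical rational spectral factorization result in Rozanov's book. The paper's proof is even terser than yours---it simply observes that the entries of $g(1/z)$ are rational and invokes the proof of Theorem~10.1 in \cite[Chapter~I]{Roz} (note: Chapter~I, not Chapter~II), which already contains the constructive rational factorization together with the uniqueness up to a unitary constant; your more explicit uniqueness argument via the outer property of $\det U$ is a valid expansion of what that reference provides.
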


\begin{proof}
Since the entries of $g(1/z)$ are rational,
the lemma follows from the proof of Theorem 10.1 in
\cite[Chapter I]{Roz}.
\end{proof}

Let $g$ and $\tilde{g}$ be as in Lemma \ref{lem:rational555}.
As in (\ref{eq:outft628}),
we define the outer function $g_{\sharp}$ in $H_2^{q\times q}(\T)$ by
\begin{equation}
g_{\sharp}(z) := \{\tilde{g}(\overline{z})\}^*.
\label{eq:outft739}
\end{equation}
Then, $g_{\sharp}$ satisfies both (C) and
\begin{equation}
g(e^{i\theta})g(e^{i\theta})^*=g_{\sharp}(e^{i\theta})^*g_{\sharp}(e^{i\theta}),
\qquad \theta\in [-\pi,\pi).
\label{eq:rational333}
\end{equation}
It should be noticed that the proof of Theorem 10.1 in \cite[Chapter I]{Roz} is constructive,
whence so is the above proof of the existence of $\tilde{g}$ and $g_{\sharp}$.

\begin{exmp}
For $c\in\D$, let
\[
g(z)=
\left(
\begin{matrix}
1 & 0\\
1/(1-cz) & 1
\end{matrix}
\right).
\]
Then $g$ satisfies (C). From the proof of Lemma \ref{lem:rational555}
and (\ref{eq:outft739}), we obtain
\[
g_{\sharp}(z)=\frac{1}{\sqrt{1 - \vert c\vert^2 + \vert c\vert^4}}
\left(
\begin{matrix}
1 - \vert c\vert^2 & 1\\
-1 + \dfrac{1 - \vert c\vert^2}{1 - cz} & -\vert c\vert^2 + \dfrac{1}{1-cz}
\end{matrix}
\right).
\]
One can also directly check that $g_{\sharp}$ satisfies both (C) and (\ref{eq:rational333}).
\end{exmp}

Let $d\in(-1/2,1/2)\setminus\{0\}$, and let $\{X_k\}$ be
a $q$-variate stationary process which has spectral density $w$ of the form
\[
\mbox{$w(e^{i\theta})=|1-e^{i\theta}|^{-2d}g(e^{i\theta})g(e^{i\theta})^*$,
where $g:\T\to \C^{q\times q}$ satisfies (C).}
\tag{F}
\]
We call the process $\{X_k\}$ a
\textit{$q$-variate FARIMA process}.
We easily find that $\{X_k\}$ satisfies (M), whence (A) and (IPF) (see Section \ref{sec:5}).
Let $\tilde{g}$ and $g_{\sharp}$ be as in Lemma \ref{lem:rational555} and
(\ref{eq:outft739}), respectively. In what follows,
as the outer functions $h$ and $\tilde{h}$ for $\{X_k\}$ in Section \ref{sec:2}, we take
\begin{equation}
h(z)=(1-z)^{-d}g(z),\qquad \tilde{h}=(1-z)^{-d}\tilde{g}(z).
\label{eq:ratouter372}
\end{equation}
Then, $h_{\sharp}$ defined by (\ref{eq:outft628}) is given by
\begin{equation}
h_{\sharp}(z)=(1-z)^{-d}g_{\sharp}(z).
\label{eq:hsharp726}
\end{equation}
From the second equality in (\ref{eq:ratouter372}), we see that the time-reversed
process $\{\tilde{X}_k\}$ of $\{X_k\}$ is also a $q$-variate FARIMA process
satifying (F) with the same differencing order $d$ and $\tilde{g}$ as
$g$.

Let $\{c_n\}$ and $\{\tilde{c}_n\}$ be the forward and backward MA coefficients of $\{X_k\}$,
respectively (see (\ref{eq:MAAR111}) and (\ref{eq:MAAR222})). Then
\begin{equation}
c_0=h(0)=g(0),\qquad
\tilde{c}_0=\tilde{h}(0)=h_{\sharp}(0)^*=g_{\sharp}(0)^*=\tilde{g}(0).
\label{eq:c-638}
\end{equation}
The sequence $\{\beta_n\}$ for $\{X_k\}$, which is defined by (\ref{eq:beta-def667}), is given by
\[
\beta_n=-
\int_{-\pi}^{\pi}e^{-in\theta} \Omega(e^{i\theta}) g(e^{i\theta})^* g_{\sharp}(e^{i\theta})^{-1}
\frac{d\theta}{2\pi},\qquad n\in\Z,
\]
with $\Omega(e^{i\theta})$ in (\ref{eq:phase-ft565}).

We define a $q\times q$ unitary matrix $U$ by
\begin{equation}
U:=g(1)^*g_{\sharp}(1)^{-1}.
\label{eq:defU327}
\end{equation}
Recall the spectral norm $\Vert a\Vert$ of $a\in \C^{q\times q}$ from Section \ref{sec:2}.
The next proposition may be viewed as an improvement of
Proposition 4.5 in \cite{BIK}.

\begin{proposition}\label{prop:delta379}
For $d\in(-1/2,1/2)\setminus\{0\}$, let $\{X_k\}$ be a $q$-variate FARIMA process
with {\rm (F)}.
For $n\in\N$, define $\Delta_n, \Delta_n'\in \C^{q\times q}$ by
\[
\beta_n=\rho_n(I_q+\Delta_n) U = \rho_n U (I_q+\Delta_n'),
\]
respectively. Then there exists a positive constant $M$ satisfying the two conditions
\begin{align}
&\nor{\Delta_n}\le Mn^{-1},
\qquad n\in\N,
\label{eq:estimate526}\\
&\nor{\Delta_n'}\le Mn^{-1},
\qquad n\in\N.
\label{eq:estimate129}
\end{align}
\end{proposition}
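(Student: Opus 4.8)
The plan is to reduce both (\ref{eq:estimate526}) and (\ref{eq:estimate129}) to the single estimate $\Vert\beta_n-\rho_n U\Vert=O(n^{-2})$, and then to extract that estimate from Lemma~\ref{lem:conv326}. Set $F(e^{i\theta}):=g(e^{i\theta})^*g_{\sharp}(e^{i\theta})^{-1}$ and $G:=F-U$. By the definition (\ref{eq:defU327}) of $U$ we have $U=F(1)$, hence $G(1)=0$. Since $g$ satisfies (C), the entries of $\theta\mapsto g(e^{i\theta})$ are rational functions of $e^{i\theta}$ with no poles on $\T$, so $g$ is $C^{\infty}$ (indeed real-analytic) on $\T$; the same holds for $g^*$, and, because $g_{\sharp}$ also satisfies (C), for $g_{\sharp}^{-1}=(\det g_{\sharp})^{-1}\,\mathrm{adj}\,g_{\sharp}$, since $\det g_{\sharp}$ has no zeros on $\T$. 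Consequently $F$, hence $G$, is $C^{\infty}$ on $\T$, so its Fourier coefficients $\widehat{G}(k):=\int_{-\pi}^{\pi}e^{-ik\theta}G(e^{i\theta})\,\tfrac{d\theta}{2\pi}$ decay faster than any power of $|k|$; in particular $\sum_{k\in\Z}|k|^{2}\Vert\widehat{G}(k)\Vert<\infty$ and $\{\widehat{G}(k)\}\in\ell_{1}^{q\times q}(\Z)$, and, the Fourier series of $G$ being absolutely convergent, $\sum_{k\in\Z}\widehat{G}(k)=G(1)=0$.

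The second step is to write $\beta_n-\rho_n U$ as a single Fourier coefficient. By the formula for $\beta_n$ recorded in Section~\ref{sec:6}, $\beta_n=-\int_{-\pi}^{\pi}e^{-in\theta}\,\Omega(e^{i\theta})F(e^{i\theta})\,\tfrac{d\theta}{2\pi}$, while (\ref{eq:defrho873}) gives $\rho_n=-\int_{-\pi}^{\pi}e^{-in\theta}\,\Omega(e^{i\theta})\,\tfrac{d\theta}{2\pi}$; since $U$ is a constant matrix, subtraction yields
\[
\beta_n-\rho_n U
=-\int_{-\pi}^{\pi}e^{-in\theta}\,\Omega(e^{i\theta})\,G(e^{i\theta})\,\frac{d\theta}{2\pi}
=\sum_{k\in\Z}\rho_{n-k}\,\widehat{G}(k),
\]
where the last equality is the convolution identity obtained by inserting the uniformly convergent expansion $G(e^{i\theta})=\sum_{k}\widehat{G}(k)e^{ik\theta}$ into the integral and using $-\int_{-\pi}^{\pi}e^{-i(n-k)\theta}\Omega(e^{i\theta})\,\tfrac{d\theta}{2\pi}=\rho_{n-k}$ (the termwise integration is legitimate because $\{\widehat{G}(k)\}\in\ell_{1}^{q\times q}(\Z)$ and $\Omega$ is bounded). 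Since $U$ is unitary and left or right multiplication by a unitary matrix preserves the spectral norm, the identities $\Delta_n=\rho_n^{-1}(\beta_n-\rho_n U)U^{-1}$ and $\Delta_n'=\rho_n^{-1}U^{-1}(\beta_n-\rho_n U)$ give $\Vert\Delta_n\Vert=\Vert\Delta_n'\Vert=|\rho_n|^{-1}\Vert\beta_n-\rho_n U\Vert$; so it remains to bound $\Vert\beta_n-\rho_n U\Vert$ by a constant multiple of $|\rho_n|\,n^{-1}$.

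The last step applies Lemma~\ref{lem:conv326} entrywise. Fix indices $i,j$ and put $s_k:=(\widehat{G}(k))_{ij}$; then $\sum_k|k|^{2}|s_k|<\infty$ and $\sum_k s_k=(G(1))_{ij}=0$, so the lemma gives $\lim_{n\to\infty}n\rho_n^{-1}\sum_k\rho_{n-k}s_k=\sum_k k s_k$. Hence the sequence $\bigl(n\rho_n^{-1}(\beta_n-\rho_n U)_{ij}\bigr)_{n}$ is bounded, say by $C_{ij}$, so that $|(\beta_n-\rho_n U)_{ij}|\le C_{ij}\,|\rho_n|\,n^{-1}$ for all $n\in\N$, and therefore $\Vert\beta_n-\rho_n U\Vert\le|\rho_n|\,n^{-1}\bigl(\sum_{i,j}C_{ij}^{2}\bigr)^{1/2}$ after bounding the spectral norm by the Frobenius norm. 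Taking $M:=\bigl(\sum_{i,j}C_{ij}^{2}\bigr)^{1/2}$ then gives both (\ref{eq:estimate526}) and (\ref{eq:estimate129}). The step requiring the most care is the smoothness of $F$: this is precisely where the rationality hypothesis (C) on $g$ (and on $g_{\sharp}$) enters, since it is what guarantees $\sum_k|k|^{2}\Vert\widehat{G}(k)\Vert<\infty$ and so lets us invoke Lemma~\ref{lem:conv326}; no commutativity obstruction arises because $\Omega$ is scalar-valued, even though $\Delta_n$ and $\Delta_n'$ are genuinely different matrices. (Alternatively, one may bypass Lemma~\ref{lem:conv326} and prove $\Vert\beta_n-\rho_n U\Vert=O(n^{-2})$ directly by integrating by parts twice on $[-\pi,0]$ and on $[0,\pi]$, the boundary contributions at $\theta=0$ coming from the jump of $\Omega$ vanishing because $G(1)=0$.)
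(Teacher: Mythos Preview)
Your proof is correct and takes essentially the same route as the paper's: both express $\beta_n$ as the convolution of $\{\rho_k\}$ with the Fourier (Laurent) coefficients of $g^*g_{\sharp}^{-1}$, observe that these coefficients decay rapidly under (C), and then invoke Lemma~\ref{lem:conv326} entrywise to obtain the $O(n^{-1})$ bound. The only differences are cosmetic---you subtract $U$ at the function level (working with $F-U$, whose coefficients sum to zero) rather than inside the expression for $\Delta_n$, and you appeal to $C^\infty$ smoothness on $\T$ where the paper appeals to holomorphy in an annulus containing $\T$; either gives more than enough decay for the hypothesis of Lemma~\ref{lem:conv326}.
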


\begin{proof}
We put $G(z):=\{g(1/\overline{z})\}^*$. Then
$g(e^{i\theta})^*g_{\sharp}(e^{i\theta})^{-1}=G(e^{i\theta})g_{\sharp}(e^{i\theta})^{-1}$ holds.
By the property (C) for $g$ and $g_{\sharp}$, there exists an open annulus $A$ containing
the unit circle
$\T$ such that both $G(z)$ and $g_{\sharp}(z)^{-1}$ are holomorphic in $A$, whence
$G(z)g_{\sharp}(z)^{-1}$ has the Laurent series expansion
\[
G(z)g_{\sharp}(z)^{-1}=\sum_{k=-\infty}^\infty s_kz^k,\qquad z\in A.
\]
Since $A\supset \T$,
the entries of $s_k$ decay exponentially as $k\to\pm\infty$. Moreover, since
$\beta_n=\sum_{k=-\infty}^\infty \rho_{n-k}s_k$ and $U=\sum_{k=-\infty}^{\infty}s_k$,
we have
\begin{align*}
&\Delta_n
= \left(\rho_n^{-1}\sum_{k=-\infty}^\infty \rho_{n-k}s_k - \sum_{k=-\infty}^{\infty}s_k\right)
U^{-1},\\
&\Delta_n'
= U^{-1}
\left(\rho_n^{-1}\sum_{k=-\infty}^\infty \rho_{n-k}s_k - \sum_{k=-\infty}^{\infty}s_k\right).
\end{align*}
Therefore, the proposition follows from Lemma \ref{lem:conv326}.
\end{proof}


\subsection{Asymptotics of the finite prediction error covariances}\label{subsec:6.3}

In this section, we derive the precise asymptotics of
the finite prediction error covariance matrices for $q$-variate FARIMA processes with (F).

For $d\in(-1/2,1/2)\setminus\{0\}$, let $\{X_k\}$ be a $q$-variate
FARIMA process with (F). Let $v_n$ and $\tilde{v}_n$ be the forward and backward finite prediction
error covariances of $\{X_k\}$ defined by (\ref{eq:var459}) and (\ref{eq:var-tilde777}), respectively.
We define the \textit{forward and backward infinite prediction
error covariances\/} $v_{\infty}\in \C^{q\times q}$ and $\tilde{v}_{\infty}\in \C^{q\times q}$,
respectively, of $\{X_k\}$ by
\begin{align}
&v_{\infty}:=\langle P_{(-\infty,-1]}^{\perp} X_{0}, P_{(-\infty,-1]}^{\perp} X_{0}\rangle
= c_0c_0^*,
\label{eq:v-inft345}\\
&\tilde{v}_{\infty}:=\langle P_{[1,\infty)}^{\perp} X_{0}, P_{[1,\infty)}^{\perp} X_{0}\rangle
= \tilde{c}_0\tilde{c}_0^*,
\label{eq:v-tild-inft345}
\end{align}
where $\{c_n\}$ and $\{\tilde{c}_n\}$ are the forward and backward MA coefficients of $\{X_k\}$,
respectively (see (\ref{eq:MAAR111}) and (\ref{eq:MAAR222})).
It should be noticed that $\tilde{v}_{\infty}$ (resp., $v_{\infty}$) is
the forward (resp., backward) infinite prediction
error covariance of the time-reversed process $\{\tilde{X}_k\}$.

\begin{theorem}\label{thm:asymp752}
For $d\in(-1/2,1/2)\setminus\{0\}$, let $\{X_k\}$ be a $q$-variate FARIMA process with {\rm (F)}.
Then (\ref{eq:v-asymp842}) and (\ref{eq:v-tilde-asymp842}) hold.
\end{theorem}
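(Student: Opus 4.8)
The plan is to derive the asymptotics of $v_n$ and $\tilde v_n$ from the representation $v_n = c_0\bigl(\sum_{k=0}^\infty b_{n,0}^{2k}\bigr)c_0^*$ in Theorem \ref{thm:cov564}, combined with the perturbation estimate $\beta_n = \rho_n(I_q + \Delta_n)U$ with $\|\Delta_n\| \le Mn^{-1}$ from Proposition \ref{prop:delta379} and the scalar baseline $u_n = 1 + d^2/n + O(n^{-2})$ from Proposition \ref{prop:arima-cov659}. The idea is that $\sum_{k=0}^\infty b_{n,0}^{2k}$ is the matrix analogue of the scalar quantity $u_n/u_0$ for the univariate FARIMA$(0,d,0)$ process (with $\rho_n$ in place of $\beta_n$, $I_q$ in place of $U$), and the unitarity of $U$ together with the smallness of $\Delta_n$ should force the matrix series to behave, to the order $O(n^{-2})$, exactly like its scalar counterpart times $I_q$.

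First I would set up the comparison carefully: let $\mathfrak b_{n,j}^k$ denote the scalars defined by the recursion (\ref{recursion}) with $\beta$ replaced by $\rho$, so that by the univariate specialization of Theorem \ref{thm:cov564} one has $\sum_{k=0}^\infty \mathfrak b_{n,0}^{2k} = u_n/u_0$ (here I would double-check the normalization, since $c_0c_0^* = u_0$ in the scalar case). Then I would estimate $b_{n,j}^k - \mathfrak b_{n,j}^k\,U^{(k)}$ for a suitable power $U^{(k)}$ of $U$ (tracking how the unitary factor propagates through the alternating products of $\beta$'s and $\beta^*$'s in (\ref{recursion})), showing by induction on $k$ that the discrepancy is $O(n^{-1})$ uniformly and, crucially, that the sum over $k$ of these discrepancies is still $O(n^{-2})$ when evaluated at $j=0$. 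The diagonal term $b_{n,0}^{2k}$ involves an even number of factors, so the unitary factors should cancel in conjugate pairs, leaving $\sum_{k=0}^\infty b_{n,0}^{2k} = (u_n/u_0)I_q + O(n^{-2}) = (1 + d^2/n)I_q/u_0 + O(n^{-2})$; multiplying on the left by $c_0$ and on the right by $c_0^*$ and using $v_\infty = c_0c_0^*$ from (\ref{eq:v-inft345}) then gives (\ref{eq:v-asymp842}). The bound (\ref{eq:v-tilde-asymp842}) follows by applying the result to the time-reversed process $\{\tilde X_k\}$, which is again a FARIMA process with the same $d$ by the discussion following (\ref{eq:c-638}), using the representation for $\tilde v_n$ in (\ref{eq:cov111}).

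The main obstacle I anticipate is controlling the tail of the series $\sum_{k} b_{n,0}^{2k}$ uniformly in $n$ and showing the error is genuinely $O(n^{-2})$ rather than merely $O(n^{-1})$. Each $b_{n,0}^{2k}$ is built from $2k$ Hankel-type contractions with the matrix $(\beta_{n+i+j+1})$, whose operator norm is comparable to $\sum_{m\ge n}\|\beta_m\| \asymp |\rho_n|\cdot n \asymp n \cdot n^{-1}$—so individual terms do not decay, and convergence of the series is a delicate cancellation phenomenon already in the scalar case (reflected in the product formula for $u_n$). The resolution should be to not estimate term by term but to exploit the exact scalar identity $\sum_k \mathfrak b_{n,0}^{2k} = u_n/u_0$ as the backbone, writing $b_{n,0}^{2k} = \mathfrak b_{n,0}^{2k} I_q + (\text{error})$ and summing the errors, where each error picks up at least one factor of $\Delta_m = O(m^{-1}) = O(n^{-1})$ relative to a scalar term, and the remaining scalar-weighted sum of the $|\mathfrak b_{n,0}^{2k}|$ is $O(1)$ (being dominated by the convergent $u_n/u_0$). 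A clean way to organize this is to introduce the matrix Hankel operator $\Gamma_n$ from the remark after Lemma \ref{lem:beta-proper648} and its scalar analogue, write $\sum_k b_{n,0}^{2k}$ as a matrix entry of $(I - \Gamma_n^*\Gamma_n')^{-1}$-type Neumann series (with $\Gamma_n'$ the conjugate Hankel operator), and compare resolvents; the resolvent difference is then directly governed by $\|\Delta_n\|$ times a uniformly bounded quantity, yielding the $O(n^{-2})$ bound after accounting for the extra $n^{-1}$ already present in $|\rho_n|^2 \cdot n = O(n^{-1})$.
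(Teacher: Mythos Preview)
Your overall strategy matches the paper's proof: compare $b_{n,0}^{2k}$ termwise with its scalar analogue $r_{n,0}^{2k}$ (your $\mathfrak b_{n,0}^{2k}$), use the unitarity of $U$ to cancel the phase factors in the even products $\beta_j\beta_k^*=\rho_j\rho_k(I_q+\Delta_j)(I_q+\Delta_k^*)$, and pick up one factor $\|\Delta_m\|\le Mn^{-1}$ in each discrepancy. Your anticipated obstacle, however, rests on a misconception that causes you to propose unnecessary machinery.

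You claim that the Hankel contractions have operator norm of order $1$ and that the terms $r_{n,0}^{2k}$ do not decay in $k$, so that convergence of $\sum_k r_{n,0}^{2k}$ is a ``delicate cancellation phenomenon.'' This is false: the series is absolutely and geometrically convergent. The crucial estimate (imported in the paper from \cite[Proposition~3.2]{IK2}) is that for any $t>1$ there exists $N$ with
\[
r_{n,0}^{2k}\le n^{-1}\{t\sin(\pi|d|)\}^{2k}\tau_{2k}\qquad (k\ge 1,\ n\ge N),
\]
where the $\tau_{2k}$ are the Maclaurin coefficients of $(\pi^{-1}\arcsin x)^2$. Since $\sin(\pi|d|)<1$ for $|d|<1/2$, this is geometric decay in $k$, and each term with $k\ge 1$ already carries an explicit factor $n^{-1}$. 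Combined with the termwise bound
\[
\|b_{n,0}^{2k}-r_{n,0}^{2k}I_q\|\le 2kMn^{-1}(1+Mn^{-1})^{2k}r_{n,0}^{2k},
\]
which follows exactly as you outline, the sum of the discrepancies is bounded by
\[
Mn^{-2}\sum_{k\ge 1}2k\,\tau_{2k}\{t^2\sin(\pi|d|)\}^{2k}=O(n^{-2}),
\]
giving $\|n(v_n-v_\infty)-n(u_n-1)v_\infty\|=O(n^{-1})$ and hence (\ref{eq:v-asymp842}) via Proposition~\ref{prop:arima-cov659}. No resolvent comparison is needed. Your worry that ``the remaining scalar-weighted sum of the $|\mathfrak b_{n,0}^{2k}|$ is $O(1)$'' overlooks that the $k=0$ term carries zero error, while $\sum_{k\ge 1}r_{n,0}^{2k}=u_n-1=O(n^{-1})$; that extra $n^{-1}$ is precisely what upgrades the total error from $O(n^{-1})$ to $O(n^{-2})$. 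A minor normalization point: in the scalar FARIMA$(0,d,0)$ case one has $c_0=1$, so $\sum_{k\ge 0} r_{n,0}^{2k}=u_n$, not $u_n/u_0$.
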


\begin{proof}
Let $u_n$ be as in (\ref{eq:unifarimacov857}); it is
the $n$-th finite prediction error variance for a univariate fractional
ARIMA$(0,d,0)$ process $\{Y_k\}$ with spectral density (\ref{eq:spY752}).
We prove
the assertion (\ref{eq:v-asymp842}) by comparing
$v_{n}$ with $u_n$.

From the representation of $v_{n}$ in (\ref{eq:cov111}), we have
\[
v_{n} - v_{\infty}= c_0\left(\sum_{k=1}^{\infty} b_{n,0}^{2k}\right)c_0^*.
\]
Similarly, $u_{n}$ can be
expressed, in terms of $\{\rho_j\}$ in (\ref{eq:defrho873}) only, as
\[
u_{n} - 1 = \sum_{k=1}^{\infty} r_{n,0}^{2k},
\]
where, for $n\in\N$ and $k\in\N\cup\{0\}$, $\{r_{n,j}^k\}_{k=0}^{\infty}\in \ell_{2+}$ is
the analogue of
$\{b_{n,l}^k\}_{k=0}^{\infty}$ for $\{Y_k\}$, defined by the recursion
\begin{equation}
r_{n,j}^0=\delta_{0j},
\qquad
r_{n,j}^{k+1}
=\sum_{l=0}^\infty
r_{n,l}^{k}\,\rho_{n+j+l+1}.
\label{eq:recursion542}
\end{equation}
Let $\Delta_n$ and $M$ be as in Proposition \ref{prop:delta379}.
Recall $U$ from (\ref{eq:defU327}).

From the definitions, we have
\[
b_{n,0}^{2}
=\sum_{l=0}^\infty
\beta_{n+l+1}\beta_{n+l+1}^\ast,\qquad
r_{n,0}^2
=\sum_{l=0}^\infty
\rho_{n+l+1}\rho_{n+l+1}.
\]
Since $U$ is unitary, we have, for $j,k\ge n$,
\[
\beta_j
\beta_k^*\\
=\rho_j\rho_k(I_q+\Delta_j)(I_q + \Delta_k^*).
\]
By Proposition \ref{prop:delta379} and the equality $(1+x)^2-1\le 2x(1+x)^2$ for $x\ge 0$, we
have
\[
\begin{aligned}
&\nor{(I_q+\Delta_j)(I_q+ \Delta_k^\ast)-I_q}
=\Vert\Delta_j + \Delta_k^\ast + \Delta_j\Delta_k^*\Vert\\
&\quad\le(1+\nor{\Delta_j})(1+\nor{\Delta_k})-1\le (1+Mn^{-1})^2-1\\
&\quad\le 2Mn^{-1}(1+Mn^{-1})^2
\end{aligned}
\]
for $j,k\ge n$.
Thus,
\[
\nor{b_{n,0}^{2} - r_{n,0}^2I_q}\le 2Mn^{-1}(1+Mn^{-1})^2 r_{n,0}^2,\qquad n\in\N.
\]
In the same way, we have, for $k=1,\dots$,
\[
\nor{b_{n,0}^{2k} - r_{n,0}^{2k}I_q}\le 2kMn^{-1}(1+Mn^{-1})^{2k} r_{n,0}^{2k},\qquad n\in\N.
\]

Take $t>1$ such that $t^2\sin(\pi\vert d\vert)<1$. Define $\tau_{2k}\in (0,\infty)$ by
\begin{equation}
(\pi^{-1}\arcsin x)^2=\sum_{k=1}^{\infty} \tau_{2k}x^{2k},\qquad \vert x\vert<1
\label{eq:tau-even251}
\end{equation}
(cf.\ Lemma 3.1 in \cite{IK2}).
Then, as in the proof of Proposition 3.2 in \cite{IK2}, there exists an $N\in\N$ such that
\[
1 + Mn^{-1}\le t,\quad r_{n,0}^{2k}\le n^{-1}\{t\sin(\pi\vert d\vert)\}^{2k}\tau_{2k}
\quad (k\in\N,\ n\ge N).
\]
Combining, we have, for $n\ge N$,
\[
\begin{aligned}
&\left\Vert n(v_{n} - v_{\infty}) - n(u_n - 1)v_{\infty}\right\Vert
\le \Vert c_0\Vert^2 \sum_{k=1}^{\infty}
n\Vert  b_{n,0}^{2k}  - r_{n,0}^{2k}I_q\Vert\\
&\qquad\le n^{-1}M\Vert c_0\Vert^2\sum_{k=1}^{\infty} 2k\tau_{2k}\{t^2\sin(\pi\vert d\vert)\}^{2k},
\end{aligned}
\]
whence $\Vert n(v_{n} - v_{\infty}) - n(u_n - 1)v_{\infty} \Vert=O(n^{-1})$ as $n\to\infty$.
This and Proposition \ref{prop:arima-cov659} yield (\ref{eq:v-asymp842}).
We obtain (\ref{eq:v-tilde-asymp842}) by applying (\ref{eq:v-asymp842}) to
the time-reversed process $\{\tilde{X}_k\}$.
\end{proof}


\subsection{Asymptotics of the PACF}

In this section, we derive the precise asymptotics of the PACF for a $q$-variate
FARIMA process $\{X_k\}$ with (F). Recall $U$ from (\ref{eq:defU327}).
As above, $\{c_n\}$ and $\{\tilde{c}_n\}$ denote the forward and backward MA coefficients of $\{X_k\}$,
respectively (see (\ref{eq:MAAR111}) and (\ref{eq:MAAR222})).

First, we consider the asymptotics of $\phi_{n,n}$ in (\ref{eq:forward-phi123}).

\begin{theorem}\label{thm:phi-asympt985}
Let $d\in(-1/2,1/2)\setminus\{0\}$, and let $\{X_k\}$ be a $q$-variate FARIMA process with {\rm (F)}.
Then
\[
\phi_{n,n}=\frac{d}{n}c_0 U\tilde{c}_0^{-1}+O(n^{-2}),
\qquad
n\to\infty.
\]
\end{theorem}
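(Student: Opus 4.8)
The plan is to read $\phi_{n,n}$ off from Theorem~\ref{thm:pac947} and then run the same perturbation scheme as in the proof of Theorem~\ref{thm:asymp752}, comparing the $q$-variate process $\{X_k\}$ with the scalar FARIMA$(0,d,0)$ process $\{Y_k\}$ of Section~\ref{subsec:6.3}, whose $n$-th predictor coefficient $\psi_{n,n}=d/(n-d)$ is known explicitly, see \eqref{eq:YPACF857}. Set $\Sigma_n^{(k)}:=\sum_{j=0}^\infty b_{n,j}^{2k}\beta_{n+j}$ and $\sigma_n^{(k)}:=\sum_{j=0}^\infty r_{n,j}^{2k}\rho_{n+j}$, with $\{r_{n,j}^k\}$ as in \eqref{eq:recursion542}. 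Theorem~\ref{thm:pac947} applied to $\{X_k\}$ gives $\phi_{n,n}=c_0\bigl(\sum_{k=0}^\infty\Sigma_n^{(k)}\bigr)\tilde{c}_0^{-1}$; applied to $\{Y_k\}$ (which satisfies (M), with $c_0=\tilde{c}_0=1$, $b_{n,j}^k=r_{n,j}^k$ and $\beta_m=\rho_m$), together with \eqref{eq:YPACF857}, it gives $\sum_{k=0}^\infty\sigma_n^{(k)}=\psi_{n,n}=d/(n-d)=\tfrac dn+O(n^{-2})$. Since $c_0,\tilde{c}_0^{-1}$ are fixed matrices and $U$ is unitary, the theorem will follow once I show
\[
\sum_{k=0}^\infty\bigl\|\Sigma_n^{(k)}-\sigma_n^{(k)}U\bigr\|=O(n^{-2}),
\]
for this yields $\sum_k\Sigma_n^{(k)}=\bigl(\sum_k\sigma_n^{(k)}\bigr)U+O(n^{-2})=\tfrac dnU+O(n^{-2})$ and hence $\phi_{n,n}=\tfrac dn\,c_0U\tilde{c}_0^{-1}+O(n^{-2})$.

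The first step is the per-term estimate
\[
\bigl\|\Sigma_n^{(k)}-\sigma_n^{(k)}U\bigr\|\le\bigl\{(1+Mn^{-1})^{2k+1}-1\bigr\}\,\bigl|\sigma_n^{(k)}\bigr|\le(2k+1)Mn^{-1}(1+Mn^{-1})^{2k}\bigl|\sigma_n^{(k)}\bigr|,\qquad n\in\N,\ k\ge0.
\]
By Proposition~\ref{prop:delta379} I write $\beta_m=\rho_m(I_q+\Delta_m)U$ and $\beta_m^*=\rho_m U^*(I_q+\Delta_m^*)$ with $\|\Delta_m\|\le Mn^{-1}$ for all $m\ge n$. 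Expanding $b_{n,j}^{2k}$ through \eqref{recursion} as a convergent sum (Lemma~\ref{lem:beta-proper648}) of products of $2k$ factors $\beta_{m_i},\beta_{m_i}^*$ with all indices $\ge n$, substituting the above, and collapsing the unitary factors via $UU^*=I_q$ — exactly as in the proof of Theorem~\ref{thm:asymp752} — one gets $b_{n,j}^{2k}=r_{n,j}^{2k}(I_q+F_{n,j}^{(k)})$ with $\|F_{n,j}^{(k)}\|\le(1+Mn^{-1})^{2k}-1$; here one uses that all $\rho_m$ with $m\ge n$ share the sign of $d$, so that $r_{n,j}^{2k}>0$ and $\sum_{j\ge0}r_{n,j}^{2k}|\rho_{n+j}|=|\sigma_n^{(k)}|$. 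Multiplying by $\beta_{n+j}=\rho_{n+j}(I_q+\Delta_{n+j})U$, summing over $j$, and bounding $\|(I_q+F_{n,j}^{(k)})(I_q+\Delta_{n+j})-I_q\|\le(1+Mn^{-1})^{2k+1}-1$ gives the displayed estimate.

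The step I expect to be the main obstacle is to control $|\sigma_n^{(k)}|$ finely enough that, after inserting the above, $\sum_{k=0}^\infty(2k+1)(1+Mn^{-1})^{2k}|\sigma_n^{(k)}|=O(n^{-1})$. A crude Cauchy--Schwarz bound $|\sigma_n^{(k)}|\le\|(\rho_{n+j})_{j\ge0}\|\,\|(r_{n,j}^{2k})_{j\ge0}\|$ in $\ell_{2+}$ loses a power $n^{1/2}$ and is not enough. The remedy is to observe that $(r_{n,j}^{2k})_{j\ge0}=R_n^{2k}e_0$, where $e_0$ is the first coordinate vector and $R_n$ is the \emph{self-adjoint} Hankel operator on $\ell_{2+}$ with matrix $(\rho_{n+i+j+1})_{i,j\ge0}$ (the scalar analogue of $\Gamma_n$ from the Remark after Lemma~\ref{lem:beta-proper648}), so that $\|R_n^{2k}e_0\|_{\ell_{2+}}^2=(e_0,R_n^{4k}e_0)_{\ell_{2+}}=r_{n,0}^{4k}$. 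Combining this with $\|(\rho_{n+j})_{j\ge0}\|_{\ell_{2+}}^2=\sum_{m\ge n}\rho_m^2=O(n^{-1})$ (immediate from $\rho_m=\sin(\pi d)/(\pi(m-d))$) and with the estimate $r_{n,0}^{4k}\le n^{-1}\{t\sin(\pi|d|)\}^{4k}\tau_{4k}$ for $n\ge N$ (available, with $\tau_{2k}$ as in \eqref{eq:tau-even251}, from the proof of Theorem~\ref{thm:asymp752} and Proposition~3.2 in \cite{IK2}), Cauchy--Schwarz now gives
\[
|\sigma_n^{(k)}|\le Cn^{-1}\{t\sin(\pi|d|)\}^{2k}\sqrt{\tau_{4k}}\qquad(k\ge1,\ n\ge N),
\]
the crucial second factor $n^{-1/2}$ coming out of the square root; for $k=0$ one has $|\sigma_n^{(0)}|=|\rho_n|=O(n^{-1})$ directly.

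To finish, since $\sum_k\tau_{2k}x^{2k}$ has radius of convergence $1$, $\sqrt{\tau_{4k}}\le(1+\varepsilon)^{2k}$ for $k$ large; choosing $t>1$ and then $\varepsilon>0$ with $(1+\varepsilon)t^2\sin(\pi|d|)<1$, and $N$ so that also $1+Mn^{-1}\le t$ for $n\ge N$, one gets $(1+Mn^{-1})^{2k}|\sigma_n^{(k)}|\le C'n^{-1}\{(1+\varepsilon)t^2\sin(\pi|d|)\}^{2k}$ for $n\ge N$, so that $\sum_k(2k+1)(1+Mn^{-1})^{2k}|\sigma_n^{(k)}|=O(n^{-1})$. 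Together with the per-term estimate of the second paragraph this gives $\sum_k\|\Sigma_n^{(k)}-\sigma_n^{(k)}U\|=O(n^{-2})$, which is the reduction used above, and the proof is complete. (A backward version, if needed, follows by applying the result to the time-reversed process $\{\tilde{X}_k\}$.)
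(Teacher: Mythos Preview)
Your proof is correct and follows essentially the same perturbation strategy as the paper: represent $\phi_{n,n}$ via Theorem~\ref{thm:pac947}, compare term by term with the scalar FARIMA$(0,d,0)$ expansion of $\psi_{n,n}=d/(n-d)$, and sum the errors. The only place you diverge is in bounding $|\sigma_n^{(k)}|$: you route through Cauchy--Schwarz and the self-adjointness of the scalar Hankel operator $R_n$ to reduce to the even-index estimate $r_{n,0}^{4k}\le n^{-1}\{t\sin(\pi|d|)\}^{4k}\tau_{4k}$ already used in the proof of Theorem~\ref{thm:asymp752}, then control $\sqrt{\tau_{4k}}$ via the radius of convergence of $(\arcsin x)^2$. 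The paper instead invokes the odd-index companion bound
\[
\epsilon\,\psi_n^k\le n^{-1}\{t\sin(\pi|d|)\}^{2k+1}\tau_{2k+1}
\]
directly from Proposition~3.2 in \cite{IK2}, with $\tau_{2k+1}$ the Taylor coefficients of $\pi^{-1}\arcsin x$ as in \eqref{eq:tau-odd251}; this gives the summable majorant $\sum_k(2k+1)\tau_{2k+1}\{t^2\sin(\pi|d|)\}^{2k+1}$ at once and spares the Hankel-operator detour. Your workaround is valid and self-contained (it uses only ingredients already present in the proof of Theorem~\ref{thm:asymp752}), but the paper's route is shorter because Proposition~3.2 of \cite{IK2} already covers $\psi_n^k$ as well as $r_{n,0}^{2k}$.
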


\begin{proof}
The proof is similar to that of Theorem \ref{thm:asymp752}.
From the representation of $\phi_{n,n}$ in Theorem \ref{thm:pac947}, we have
\[
\phi_{n,n}=c_0\left(\sum_{k=0}^\infty\phi_n^k\right)\tilde{c}_0^{-1}
\quad\mbox{with}\quad
\phi_n^k:=\sum_{j=0}^\infty b_{n,j}^{2k}\beta_{n+j}.
\]
Similarly, the scalar coefficient $\psi_{n,n}$ for a univariate FARIMA$(0,d,0)$ process
$\{Y_k\}$, which is given by (\ref{eq:YPACF857}), can be
expressed, in terms of $\{\rho_j\}$ in (\ref{eq:defrho873}) only, as
\[
\psi_{n, n}=\sum_{k=0}^\infty\psi_n^k
\quad\mbox{with}\quad
\psi_n^k:=\sum_{j=0}^\infty r_{n,j}^{2k}\rho_{n+j},
\]
where $r_{n,j}^k$ are defined by the recursion (\ref{eq:recursion542}).
We define $\epsilon:=d/\vert d\vert$ so that $\vert \rho_n\vert = \epsilon \rho_n$.
Let $\Delta_n$ and $M$ be as in Proposition \ref{prop:delta379}.

First, since
\[
\phi_n^0
=\beta_{n}=\rho_n (I_q+\Delta_n) U,
\qquad
\psi_n^0=\rho_n,
\]
it follows from Proposition \ref{prop:delta379} that
\[
\nor{\phi_n^0 - \psi_n^0U}\le Mn^{-1} \epsilon  \rho_n
=Mn^{-1}\epsilon\psi_n^0.
\]
Next, we have
\begin{align*}
&\phi_n^1
=\sum_{j=0}^\infty
\left(\sum_{l=0}^\infty
\beta_{n+l+1}\beta_{n+j+l+1}^\ast
\right)\beta_{n+j}, \\
&\psi_n^1
=\sum_{j=0}^\infty
\left(\sum_{l=0}^\infty
\rho_{n+l+1}\rho_{n+j+l+1}
\right)\rho_{n+j}.
\end{align*}
Then, since $U$ is unitary, we have, for $j,k,l\ge n$,
\[
\beta_j
\beta_k^\ast\beta_l\\
=\rho_j\rho_k\rho_l
(I_q+\Delta_j)
(I_q+\Delta_k^\ast)
(I_q+\Delta_l)U.
\]
By Proposition \ref{prop:delta379} and the equality $(1+x)^3-1\le 3x(1+x)^3$ for $x\ge 0$,
we have
\[
\begin{aligned}
&\nor{(I_q+\Delta_j)(I_q+\Delta_k^\ast)(I_q+\Delta_l)-I_q}\\
&\quad=\Vert\Delta_j + \Delta_k^\ast + \Delta_l + \Delta_j\Delta_k^\ast + \Delta_j\Delta_l
+ \Delta_k^\ast\Delta_l + \Delta_j\Delta_k^\ast\Delta_l\Vert\\
&\quad\le(1+\nor{\Delta_j})(1+\nor{\Delta_k})
(1+\nor{\Delta_l})-1\le (1+Mn^{-1})^3-1\\
&\quad\le 3Mn^{-1}(1+Mn^{-1})^3
\end{aligned}
\]
for $j,k,l\ge n$.
Thus,
\[
\nor{\phi_n^1 - \psi_n^1 U}\le 3Mn^{-1}(1+Mn^{-1})^3\epsilon \psi_n^1,\qquad n\in\N.
\]
In the same way, we have, for $k=0,1,\dots$,
\[
\nor{\phi_n^k-\psi_n^k U}\le (2k+1)Mn^{-1}(1+Mn^{-1})^{2k+1}\epsilon\psi_n^k,
\qquad n\in\N.
\]

Take $t>1$ such that $t^2\sin(\pi\vert d\vert)<1$. Define $\tau_{2k+1}\in (0,\infty)$ by
\begin{equation}
\pi^{-1}\arcsin x=\sum_{k=0}^{\infty} \tau_{2k+1}x^{2k+1},\qquad \vert x\vert<1
\label{eq:tau-odd251}
\end{equation}
(cf.\ Lemma 3.1 in \cite{IK2}).
Then, as in the proof of Proposition 3.2 in \cite{IK2}, there exists an $N\in\N$ such that
\[
1 + Mn^{-1}\le t,\quad \epsilon\psi_n^k\le n^{-1}\{t\sin(\pi\vert d\vert)\}^{2k+1}\tau_{2k+1}
\quad (k\in\N\cup\{0\},\ n\ge N).
\]
Combining, we have, for $n\ge N$,
\[
\begin{aligned}
&\left\Vert n\phi_{n,n} - \frac{n}{n-d}dc_0 U\tilde{c}_0^{-1}\right\Vert\\
&\quad=n\Vert \phi_{n,n} - \psi_{n,n}c_0 U\tilde{c}_0^{-1}\Vert
\le \Vert c_0\Vert \Vert \tilde{c}_0^{-1}\Vert
\sum_{k=0}^{\infty} n\left\Vert \phi_n^k - \psi_n^k U\right\Vert\\
&\quad\le n^{-1}\Vert c_0\Vert
\Vert \tilde{c}_0^{-1}\Vert M\sum_{k=0}^{\infty} (2k+1)\tau_{2k+1}\{t^2\sin(\pi\vert d\vert)\}^{2k+1},
\end{aligned}
\]
whence $\Vert n\phi_{n,n} - dc_0 U\tilde{c}_0^{-1}\Vert=O(n^{-1})$ as $n\to\infty$.
Thus the theorem follows.
\end{proof}

Recall $v_{\infty}$ and $\tilde{v}_{\infty}$ from
(\ref{eq:v-inft345}) and (\ref{eq:v-tild-inft345}), respectively.
Notice that $v_{\infty}^{-1/2}c_0$ (resp., $\tilde{v}_{\infty}^{-1/2}\tilde{c}_0$) is the
polar part of $c_0$ (resp., $\tilde{c}_0$).
Recall the PACF $\alpha_n$ of $\{X_k\}$ from Section \ref{sec:5}.
The above theorem gives the following rate of convergence for $\alpha_n$ as $n\to\infty$.

\begin{theorem}\label{thm:PACF431}
Let $d\in(-1/2,1/2)\setminus\{0\}$, and let $\{X_k\}$ be a $q$-variate FARIMA process with {\rm (F)}.
Then (\ref{eq:PACF741}) holds with the unitary matrix $V\in\C^{q\times q}$ given by
\[
V:=v_{\infty}^{-1/2}c_0\cdot U\cdot (\tilde{v}_{\infty}^{-1/2}\tilde{c}_0)^*.
\]
\end{theorem}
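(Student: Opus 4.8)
Theorem \ref{thm:PACF431} should follow by combining the asymptotics of $\phi_{n,n}$ from Theorem \ref{thm:phi-asympt985} with the identity (\ref{eq:pacf-pha428}), which expresses the PACF as $\alpha_n=(v_{n-1})^{-1/2}\phi_{n,n}(\tilde v_{n-1})^{1/2}$, together with the convergence of the finite prediction error covariances $v_n$ and $\tilde v_n$ to $v_\infty$ and $\tilde v_\infty$ (Theorem \ref{thm:asymp752}). First I would write, for $n\ge 2$,
\[
\alpha_n=(v_{n-1})^{-1/2}\phi_{n,n}(\tilde v_{n-1})^{1/2},
\]
and substitute $\phi_{n,n}=\tfrac{d}{n}c_0U\tilde c_0^{-1}+O(n^{-2})$ from Theorem \ref{thm:phi-asympt985}. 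Since the leading term is already $O(n^{-1})$ and the error is $O(n^{-2})$, while $v_{n-1}^{-1/2}$ and $\tilde v_{n-1}^{1/2}$ are bounded (they converge), the product
\[
(v_{n-1})^{-1/2}\cdot O(n^{-2})\cdot(\tilde v_{n-1})^{1/2}
\]
is $O(n^{-2})$ and can be absorbed. So it suffices to analyze $(v_{n-1})^{-1/2}\cdot\tfrac{d}{n}c_0U\tilde c_0^{-1}\cdot(\tilde v_{n-1})^{1/2}$.

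The second step is to replace $v_{n-1}$ and $\tilde v_{n-1}$ by their limits at the cost of an $O(n^{-2})$ term. From (\ref{eq:v-asymp842}) we have $v_{n-1}=v_\infty+O(n^{-1})$, and since $v_\infty=c_0c_0^*>0$ (by (M), $c_0$ is invertible), the map $a\mapsto a^{-1/2}$ is smooth near $v_\infty$, so $(v_{n-1})^{-1/2}=v_\infty^{-1/2}+O(n^{-1})$; similarly $(\tilde v_{n-1})^{1/2}=\tilde v_\infty^{1/2}+O(n^{-1})$. Plugging these into the leading term, the cross terms involving one $O(n^{-1})$ factor against the $\tfrac{d}{n}$ factor are again $O(n^{-2})$, so
\[
\alpha_n=\frac{d}{n}\,v_\infty^{-1/2}c_0\,U\,\tilde c_0^{-1}\tilde v_\infty^{1/2}+O(n^{-2}).
\]
It remains only to identify the matrix coefficient with the claimed $V$. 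Here I would use $\tilde v_\infty=\tilde c_0\tilde c_0^*$ from (\ref{eq:v-tild-inft345}), so that $\tilde c_0^{-1}\tilde v_\infty^{1/2}=\tilde c_0^{-1}(\tilde c_0\tilde c_0^*)^{1/2}$. Writing the polar decomposition $\tilde c_0=(\tilde c_0\tilde c_0^*)^{1/2}W$ with $W$ unitary — equivalently $\tilde v_\infty^{-1/2}\tilde c_0=W$ — one gets $\tilde c_0^{-1}\tilde v_\infty^{1/2}=W^{-1}=W^*=(\tilde v_\infty^{-1/2}\tilde c_0)^*$, which is exactly the factor appearing in the definition of $V$. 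That $V$ is unitary is then clear: $v_\infty^{-1/2}c_0$ is the unitary polar part of $c_0$, $U$ is unitary by (\ref{eq:defU327}), and $(\tilde v_\infty^{-1/2}\tilde c_0)^*$ is unitary, so their product is unitary.

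The only delicate point — and the step I would be most careful about — is making the perturbation bookkeeping rigorous, i.e. verifying that the error terms in the three substitutions (for $\phi_{n,n}$, for $v_{n-1}^{-1/2}$, and for $\tilde v_{n-1}^{1/2}$) genuinely combine into a single $O(n^{-2})$ term rather than merely $o(n^{-1})$; this is routine but requires noting that each replaced quantity is bounded uniformly in $n$ and that the leading term carries an explicit $1/n$. The $n=1$ case is handled separately and trivially, since $\alpha_1=(v_0)^{-1/2}\langle X_0,X_{-1}\rangle(\tilde v_0)^{-1/2}$ is a fixed matrix and the asymptotic statement is vacuous there. No genuinely new idea is needed beyond Theorems \ref{thm:asymp752} and \ref{thm:phi-asympt985}; the proof is essentially an algebraic consolidation of results already established.
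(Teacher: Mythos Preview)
Your proposal is correct and follows essentially the same route as the paper: combine the identity $\alpha_n=(v_{n-1})^{-1/2}\phi_{n,n}(\tilde v_{n-1})^{1/2}$ with Theorem \ref{thm:phi-asympt985} and Theorem \ref{thm:asymp752}, then identify the limiting coefficient via the polar decomposition of $\tilde c_0$. The only cosmetic difference is in justifying $(v_{n-1})^{-1/2}=v_\infty^{-1/2}+O(n^{-1})$: you invoke smoothness of $a\mapsto a^{-1/2}$ on positive-definite matrices, whereas the paper argues via the monotonicity $v_n\ge v_\infty$ (from Proposition \ref{prop:positiv528} and Theorem \ref{thm:cov564}) together with explicit perturbation bounds from Bhatia's \textit{Matrix Analysis}; both are valid and lead to the same conclusion.
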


\begin{proof}
From the first equality in (\ref{eq:cov111}) in Theorem \ref{thm:cov564} and
Proposition \ref{prop:positiv528}, we
have $v_n\ge v_{\infty}$. Therefore, we see
from Theorem \ref{thm:asymp752} and \cite[Theorem X.3.7]{Bha} that
$\Vert v_n^{1/2} - v_{\infty}^{1/2}\Vert = O(n^{-1})$ as $n\to\infty$.
Similarly, we have $\tilde{v}_n^{1/2} = \tilde{v}_{\infty}^{1/2} + O(n^{-1})$ as $n\to\infty$.

From $v_n\ge v_{\infty}$ and \cite[Propositions V.1.6 and V.1.8]{Bha}, we have
$v_n^{-1/2}\le v_{\infty}^{-1/2}$,
so that $\Vert v_n^{-1/2}\Vert \le \Vert v_{\infty}^{-1/2}\Vert$. Hence, as $n\to\infty$,
\[
\begin{aligned}
\Vert v_n^{-1/2} - v_{\infty}^{-1/2}\Vert
&=\Vert v_n^{-1/2}(v_{\infty}^{1/2} - v_n^{1/2}) v_{\infty}^{-1/2}\Vert\\
&\le \Vert v_{\infty}^{-1/2}\Vert^2 \Vert v_n^{1/2} - v_{\infty}^{1/2}\Vert
= O(n^{-1}).
\end{aligned}
\]

Combining these with (\ref{eq:pacf-pha428}) and Theorem \ref{thm:phi-asympt985}, we have
\[
\begin{aligned}
n\alpha_n
&= v_{n-1}^{-1/2} \cdot n\phi_{n,n} \cdot \tilde{v}_{n-1}^{1/2}\\
&=\{v_{\infty}^{-1/2} + O(n^{-1})\}
\{dc_0 U\tilde{c}_0^{-1}+O(n^{-1})\}
\{\tilde{v}_{\infty}^{1/2} + O(n^{-1})\}\\
&=dv_{\infty}^{-1/2}c_0\cdot U\cdot (\tilde{v}_{\infty}^{-1/2}\tilde{c}_0)^* + O(n^{-1})
\end{aligned}
\]
as $n\to\infty$.
Thus the theorem follows.
\end{proof}

\begin{remark}
If we choose $g$ and $\tilde{g}$ so that both $g(0)\ge 0$ and $\tilde{g}(0)\ge 0$ hold, then we see from
(\ref{eq:c-638}), (\ref{eq:v-inft345}) and (\ref{eq:v-tild-inft345}) that
$c_0=v_{\infty}^{1/2}$ and $\tilde{c}_0=\tilde{v}_{\infty}^{1/2}$, whence $V=U$.
\end{remark}

\subsection{Baxter's inequality}

In this section, we present Baxter's inequality for multivariate FARIMA processes with
$0<d<1/2$.
It extends the corresponding univariate result in \cite{IK2}.

For $d\in (-1/2, 1/2)\setminus \{0\}$,
let $\{X_k\}$ be a $q$-variate FARIMA process with (F).
Recall the forward and backward AR coefficients $a_n$ and $\tilde{a}_n$ of $\{X_k\}$ from 
(\ref{eq:MAAR111}) and (\ref{eq:MAAR222}), respectively. They satisfy
\begin{align}
\left\Vert n^{1+d}a_n + \frac{1}{\Gamma(-d)}g(1)^{-1} \right\Vert &= O(n^{-1}),\qquad n\to\infty,
\label{eq:ar126}\\
\left\Vert n^{1+d}\tilde{a}_n + \frac{1}{\Gamma(-d)}\{g_{\sharp}(1)^*\}^{-1} \right\Vert
&= O(n^{-1}),
\qquad n\to\infty
\label{eq:aar463}
\end{align}
(cf.\ \cite[Lemma 2.2]{I2}). In particular, we have
\begin{align}
\lim_{n\to\infty} n^{1+d}\Vert a_n\Vert &= \frac{\Vert g(1)^{-1}\Vert}{\vert \Gamma(-d)\vert},
\label{eq:asympAR111}\\
\lim_{n\to\infty} n^{1+d}\Vert \tilde{a}_n\Vert &= \frac{\Vert \{g_{\sharp}(1)^*\}^{-1}\Vert}{\vert \Gamma(-d)\vert}.
\label{eq:asympAR222}
\end{align}
We see from (\ref{eq:asympAR111}) and (\ref{eq:asympAR222}) that
(\ref{eq:integrable432}) holds if $0<d<1/2$.

Recall $\phi_{n,k}$ and $\phi_k$ from (\ref{eq:forward-phi123}) and (\ref{eq:ipc574}), respectively.

\begin{theorem}\label{thm:Baxter01}
For $d\in (0,1/2)$, let $\{X_k\}$ be a $q$-variate FARIMA process with {\rm (F)}.
Then the forward finite and infinite predictor coefficients $\phi_{n,k}$ and
$\phi_k$, respectively, of $\{X_k\}$ satisfy
\[
\sum_{j=1}^{n}\nor{\phi_{n,j}-\phi_j}=O(n^{-d}),
\qquad
 n\to\infty.
\]
\end{theorem}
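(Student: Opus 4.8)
The plan is to use the representation of $\phi_{n,j}$ from Theorem~\ref{thm:fpc548} together with the estimates already established for the auxiliary sequences. Write $\phi_{n,j}=\sum_{k=0}^\infty\{\phi_{n,j}^{2k}+\phi_{n,n-j+1}^{2k+1}\}$. The key observation is that the $k=0$, even term is $\phi_{n,j}^{0}=c_0\sum_{l=0}^\infty b_{n,l}^0 a_{j+l}=c_0 a_j=\phi_j$ (using $b_{n,l}^0=\delta_{0l}I_q$ and $\phi_j=c_0a_j$ from \eqref{eq:ipc574}), so that $\phi_{n,j}-\phi_j=\sum_{k\ge 1}\phi_{n,j}^{2k}+\sum_{k\ge 0}\phi_{n,n-j+1}^{2k+1}$. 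Hence
\[
\sum_{j=1}^n\nor{\phi_{n,j}-\phi_j}
\le\sum_{j=1}^n\sum_{k=1}^\infty\nor{\phi_{n,j}^{2k}}
+\sum_{j=1}^n\sum_{k=0}^\infty\nor{\phi_{n,n-j+1}^{2k+1}}.
\]
In the second double sum, reindexing $j\mapsto n-j+1$ shows it equals $\sum_{j=1}^n\sum_{k=0}^\infty\nor{\phi_{n,j}^{2k+1}}$, so it suffices to bound $\sum_{j=1}^n\sum_{k=0}^\infty\nor{\phi_{n,j}^{2k+1}}$ and $\sum_{j=1}^n\sum_{k=1}^\infty\nor{\phi_{n,j}^{2k}}$ separately.

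Next I would insert the definitions $\phi_{n,j}^{2k}=c_0\sum_{l\ge 0}b_{n,l}^{2k}a_{j+l}$ and $\phi_{n,j}^{2k+1}=c_0\sum_{l\ge 0}b_{n,l}^{2k+1}\tilde a_{j+l}$ and carry out the $j$-summation first. From the AR asymptotics \eqref{eq:asympAR111}, \eqref{eq:asympAR222} we have $\nor{a_m},\nor{\tilde a_m}\le C m^{-1-d}$ for $m\ge 1$; since $j+l\ge n+1$ whenever $j\ge 1$ and $b_{n,l}^{k}$ is paired with indices starting at $n+1$ in the relevant range, summing $\sum_{j\ge 1}\nor{a_{j+l}}$ produces a factor of order $(n+l)^{-d}$ (by comparison with $\int_{n+l}^\infty x^{-1-d}\,dx$), and likewise for $\tilde a$. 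Thus each inner sum over $j$ is bounded by $C'(n+l)^{-d}\le C' n^{-d}$ uniformly in $l$, and we are left with
\[
\sum_{j=1}^n\sum_{k\ge 1}\nor{\phi_{n,j}^{2k}}
\le C'' n^{-d}\nor{c_0}\sum_{k\ge 1}\sum_{l\ge 0}\nor{b_{n,l}^{2k}},
\]
and similarly for the odd sum with $b_{n,l}^{2k+1}$.

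The remaining task is therefore to show that $\sum_{k\ge 1}\sum_{l\ge 0}\nor{b_{n,l}^{2k}}$ and $\sum_{k\ge 0}\sum_{l\ge 0}\nor{b_{n,l}^{2k+1}}$ are bounded uniformly in $n$ (in fact one expects them $O(n^{\text{const}})$ would still be fine provided it is absorbed, but uniform boundedness is cleanest). This is where the bulk of the work lies, and it is the step I expect to be the main obstacle. The idea is to control the $b_{n,l}^{k}$ via the comparison with the scalar FARIMA quantities $r_{n,l}^{k}$ and Proposition~\ref{prop:delta379}, exactly as in the proofs of Theorems~\ref{thm:asymp752} and \ref{thm:phi-asympt985}: one writes $\beta_m=\rho_m(I_q+\Delta_m)U$ with $\nor{\Delta_m}\le M m^{-1}$, deduces $\nor{b_{n,l}^{2k}}\le (1+Mn^{-1})^{2k}r_{n,l}^{2k}$ and $\nor{b_{n,l}^{2k+1}}\le(1+Mn^{-1})^{2k+1}r_{n,l}^{2k+1}$ (here using that the $\beta$'s appearing all carry indices $\ge n+1$, so the error factors are uniformly $\le t$ for some fixed $t>1$ once $n\ge N$), and then invokes the scalar bound from the proof of Proposition~3.2 in \cite{IK2}: for $0<d<1/2$ and $n\ge N$, $\sum_{k\ge 1}\sum_{l\ge 0}r_{n,l}^{2k}$ and $\sum_{k\ge 0}\sum_{l\ge 0}r_{n,l}^{2k+1}$ are summable because the generating function $\sum_k\tau_k x^k$ associated with $\pi^{-1}\arcsin x$ converges for $|x|<1$ and $t^2\sin(\pi|d|)<1$. (For the finitely many $n<N$ the quantities are plainly finite since the sequences lie in $\ell_{2+}^{q\times q}$ and $\ell_{2+}$.) Combining the uniform bound on the double sums of $\nor{b_{n,l}^{k}}$ with the $n^{-d}$ gain from the $j$-summation of the AR tails yields $\sum_{j=1}^n\nor{\phi_{n,j}-\phi_j}=O(n^{-d})$, as claimed. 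The delicate points to watch are: ensuring that all $\beta$-indices entering $b_{n,l}^{k}$ are indeed $\ge n+1$ so that Proposition~\ref{prop:delta379} applies with the single parameter $n$; and that the $j$-sum of $\nor{a_{j+l}}$ is genuinely uniformly $O(n^{-d})$ in $l$, which it is since the tail $\sum_{m\ge n+1+l}m^{-1-d}$ is decreasing in $l$.
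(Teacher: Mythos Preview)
Your overall strategy---starting from Theorem~\ref{thm:fpc548}, peeling off the $k=0$ even term as $\phi_j$, and comparing the $b_{n,l}^k$ with the scalar $r_{n,l}^k$ via Proposition~\ref{prop:delta379}---is exactly what the paper does. However, the way you split the remaining estimate contains two genuine errors, and both of them break the argument.

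First, the assertion ``$j+l\ge n+1$ whenever $j\ge 1$'' is simply false: in $\phi_{n,j}^{2k}=c_0\sum_{l\ge 0}b_{n,l}^{2k}a_{j+l}$ the index $j$ ranges over $1,\dots,n$ and $l\ge 0$, so $j+l$ can be as small as $1$. Consequently $\sum_{j=1}^n\nor{a_{j+l}}\le C\sum_{m\ge l+1}m^{-1-d}=O((l+1)^{-d})$, which is $O(1)$ (not $O(n^{-d})$) when $l=0$. You have confused these AR indices with the $\beta$-indices $n+j+l+1$ appearing in the recursion \eqref{recursion}; only the latter are $\ge n+1$.

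Second, the double sums $\sum_{k\ge 0}\sum_{l\ge 0}\nor{b_{n,l}^{2k+1}}$ are \emph{not} bounded uniformly in $n$; indeed they diverge. Already for $k=0$ one has $b_{n,l}^{1}=\beta_{n+l+1}$, so $\nor{b_{n,l}^{1}}\sim\rho_{n+l+1}\sim c/(n+l+1)$, and $\sum_{l\ge 0}\rho_{n+l+1}$ is a divergent harmonic tail. The bound from \cite{IK2} (and the one used in the paper) is $r_{n,l}^{k}\le n^{-1}\tau_k\{t\sin(\pi d)\}^k$, which is \emph{uniform in $l$} but does not imply summability in $l$.

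The correct bookkeeping is to keep the $l$-sum together with the $j$-sum of the AR tails: one shows
\[
\sup_{l\ge 0}\sum_{k\ge 1}\nor{b_{n,l}^{k}}=O(n^{-1}),
\qquad
\sum_{j=1}^{n}\sum_{l\ge 0}R_{j+l}
=\sum_{j=1}^{n}\sum_{m\ge j}R_{m}=O(n^{1-d}),
\]
where $R_m=\max\{\nor{\phi_m},\nor{\tilde\phi_m}\}=O(m^{-1-d})$. The product of these two bounds gives $O(n^{-d})$. In other words, the factor $n^{-d}$ does \emph{not} come from the $j$-sum alone: it arises as $n^{-1}\cdot n^{1-d}$, with the $n^{-1}$ supplied by the (uniform-in-$l$) decay of $\sum_k\nor{b_{n,l}^{k}}$ and the $n^{1-d}$ by the double sum of the AR tails.
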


\begin{proof}
For $k=0,1,\dots$, we show by induction on $k$ that
\begin{equation}
\nor{b_{n,l}^k}
\le(1+Mn^{-1})^kr_{n,l}^k,\qquad n\in\N,\ l\in\N\cup\{0\},
\label{eq:estimate314}
\end{equation}
where $M$ is a positive constant satisfying (\ref{eq:estimate526}) and
$r_{n,l}^k$ are defined by (\ref{eq:recursion542}).
Indeed, the case $k=0$ is evident by the definitions
$b_{n,l}^0=\delta_{0l}I_q$ and $r_{n,l}^0=\delta_{0l}$.
Assuming (\ref{eq:estimate314}) for $k\ge 0$, we see from
Proposition \ref{prop:delta379} that
\[
\begin{aligned}
\nor{b_{n,l}^{k+1}}
&\le \sum_{m=0}^{\infty}\nor{b_{n,m}^k}\nor{\beta_{n+l+m+1}}\\
&\le (1+Mn^{-1})^{k+1}\sum_{m=0}^{\infty} r_{n,m}^k\rho_{n+l+m+1}
=(1+Mn^{-1})^{k+1}r_{n,l}^{k+1}.
\end{aligned}
\]
Thus (\ref{eq:estimate314}) also holds for $k+1$.

Define $\tau_{k}\in (0,\infty)$ by (\ref{eq:tau-even251}) and (\ref{eq:tau-odd251}).
Then we see from Proposition 3.2 in \cite{IK2} that,
for any $t>1$, there exits an $N\in\N$ such that
\[
nr_{n,l}^k\le \tau_k\{t\sin(\pi d)\}^k,
\quad
1+Mn^{-1}\le t\qquad (l\in\N\cup\{0\},\ k\in\N,\ n\ge N).
\]
Here we take $t>1$ such that $t^2\sin(\pi d)<1$.
Then, from (\ref{eq:estimate314}),
\begin{equation}
n\sum_{k=1}^\infty \nor{b_{n,l}^k}
\le
\sum_{k=1}^\infty \tau_k\{t^2\sin(\pi d)\}^k<\infty,\qquad l\in\N\cup\{0\},\ n\ge N.
\label{eq:estim812}
\end{equation}
From $\phi_j=c_0 a_j=c_0\sum_{l=0}^{\infty} b_{n,l}^0 a_{j+l}$ and Theorem \ref{thm:fpc548},
we have
\[
\phi_{n,j}-\phi_j
=c_0 \sum_{k=1}^\infty\sum_{l=0}^\infty
b_{n,l}^{2k}a_{j+l}
+c_0 \sum_{k=0}^\infty\sum_{l=0}^\infty
b_{n,l}^{2k+1}\tilde a_{n-j+l+1},
\]
whence
\[
\sum_{j=1}^{n} \Vert \phi_{n,j}-\phi_j\Vert
\le \sum_{j=1}^n \sum_{l=0}^{\infty} R_{j+l} \sum_{k=1}^{\infty} \Vert b_{n,l}^{k}\Vert,
\]
where $R_j=\max\{\nor{\phi_j},\nor{\tilde\phi_j}\}$.
Since $n^{1+d}R_n$ is bounded by (\ref{eq:asympAR111}) and (\ref{eq:asympAR222}),
we have, for $n\in\N$,
\[
n^{-1+d}\sum_{j=1}^n \sum_{l=j}^\infty R_{l}
\le \left\{\sup_{l\in\N} l^{1+d}R_l\right\}
\left\{\sup_{m\in\N} m^{-1+d}\sum_{j=1}^m \sum_{l=j}^{\infty} l^{-1-d}\right\}<\infty.
\]
Hence we see from (\ref{eq:estim812}) that,
for $n\ge N$,
\[
n^d\sum_{j=1}^{n}\nor{\phi_{n,j}-\phi_j}
\le \left\{\sum_{k=1}^\infty \tau_k\{r^2\sin(\pi d)\}^k\right\}
\left\{
\sup_{m\in\N} m^{-1+d}\sum_{j=1}^m \sum_{l=j}^\infty R_{l}\right\}<\infty.
\]
The desired result follows from this.
\end{proof}

Since $\phi_n=c_0 a_n$, we see from (\ref{eq:ar126}) that
\[
\left\Vert n^{1+d}\phi_n + \frac{1}{\Gamma(-d)}c_0g(1)^{-1} \right\Vert = 
O(n^{-1}),\qquad n\to\infty.
\]
In particular, 
\[
\lim_{n\to\infty} n^{1+d}\Vert \phi_n\Vert = \frac{\Vert c_0g(1)^{-1}\Vert}{\vert \Gamma(-d)\vert}.
\]
From this and \cite[Proposition 1.5.8]{BGT}, 
we obtain the following asymptotic behavior of $\sum_{j=n+1}^{\infty}\Vert \phi_j\Vert$ 
as $n\to\infty$:
\begin{equation}
\lim_{n\to\infty}n^d\sum_{j=n+1}^{\infty}\Vert \phi_j\Vert 
= \frac{\Vert c_0g(1)^{-1}\Vert}{\Gamma(1-d)}.
\label{eq:phi247}
\end{equation}

Here is Baxter's inequality for multivariate FARIMA processes with $0<d<1/2$.

\begin{theorem}\label{thm:Baxter02}
For $d\in (0,1/2)$, let $\{X_k\}$ be a $q$-variate FARIMA process with {\rm (F)},
and let $\phi_{n, k}$ and $\phi_n$ be as in Theorem \ref{thm:Baxter01}.
Then, there exists a positive constant $K$ such that (\ref{eq:Baxter597}) holds.
\end{theorem}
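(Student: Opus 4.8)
The plan is to derive Baxter's inequality as an essentially immediate consequence of the rate estimate in Theorem~\ref{thm:Baxter01} together with the precise tail asymptotics~(\ref{eq:phi247}); the underlying point is that the two sides of~(\ref{eq:Baxter597}) decay at the same rate $n^{-d}$, so the inequality reduces to a matching of constants.

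First I would record the two one-sided rate bounds that are already available. On the one hand, Theorem~\ref{thm:Baxter01} furnishes a constant $C\in(0,\infty)$ and an integer $N_1\in\N$ with $\sum_{j=1}^{n}\nor{\phi_{n,j}-\phi_j}\le Cn^{-d}$ for all $n\ge N_1$. On the other hand, since $c_0=g(0)$ and $g(1)$ are invertible under~(F) --- this is part of condition (C) on $g$ --- the limiting constant $\nor{c_0 g(1)^{-1}}/\Gamma(1-d)$ in~(\ref{eq:phi247}) is strictly positive, so there are $c'\in(0,\infty)$ and $N_2\in\N$ with $\sum_{j=n+1}^{\infty}\nor{\phi_j}\ge c'n^{-d}$ for all $n\ge N_2$. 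Here I would also note that the tail sum is finite: because $0<d<1/2$, condition~(\ref{eq:integrable432}) holds (as observed just after~(\ref{eq:asympAR222})), hence $\{\phi_j\}=\{c_0 a_j\}\in\ell_{1+}^{q\times q}$.

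Combining these for $n\ge N:=\max\{N_1,N_2\}$ gives $\sum_{j=1}^{n}\nor{\phi_{n,j}-\phi_j}\le Cn^{-d}\le (C/c')\sum_{j=n+1}^{\infty}\nor{\phi_j}$, which is~(\ref{eq:Baxter597}) with constant $C/c'$ for all large $n$. It then remains to handle the finitely many indices $n=1,\dots,N-1$. For each such $n$ the left-hand side of~(\ref{eq:Baxter597}) is a finite number, while the right-hand side is strictly positive --- indeed $\phi_j=c_0 a_j\ne 0$ for infinitely many $j$ by~(\ref{eq:asympAR111}) and the invertibility of $c_0$, so $\sum_{j=n+1}^{\infty}\nor{\phi_j}>0$. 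Hence the ratio of the two sides is finite for each of these finitely many $n$, and taking $K$ to be the maximum of $C/c'$ and these finitely many ratios gives~(\ref{eq:Baxter597}) for every $n\in\N$.

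The only subtlety worth flagging is that almost all of the real work is already packaged into Theorem~\ref{thm:Baxter01} and into the tail asymptotics~(\ref{eq:phi247}); here one simply has to be sure that the tail sum $\sum_{j=n+1}^{\infty}\nor{\phi_j}$ never vanishes (so that dividing by it is legitimate) and that the exponent $-d$ is genuinely the same on both sides, which it is. No estimate beyond those already established in the paper is required.
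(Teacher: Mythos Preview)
Your proof is correct and follows exactly the approach of the paper, which is the one-line observation that Baxter's inequality follows from Theorem~\ref{thm:Baxter01} in view of~(\ref{eq:phi247}). You have simply filled in the routine details---checking that the limiting constant in~(\ref{eq:phi247}) is strictly positive, handling the finitely many small $n$, and verifying that the tail sum never vanishes---that the paper leaves implicit.
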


\begin{proof}
In view of (\ref{eq:phi247}), Theorem \ref{thm:Baxter01} gives the desired assertion.
\end{proof}

By applying Theorem \ref{thm:Baxter02} to
the time-reversed process $\{\tilde{X}_k\}$, we immediately obtain the
following backward Baxter inequality.

\begin{corollary}\label{cor:backward-baxter519}
For $d\in (0,1/2)$, let $\{X_k\}$ be a $q$-variate FARIMA process with {\rm (F)},
and let $\tilde{\phi}_{n,k}$ and $\tilde{\phi}_k$ be the backward finite and infinite
predictor coefficients, respectively, of $\{X_k\}$.
Then, there exists a positive constant $\tilde{K}$ such that
\begin{equation}
\sum_{j=1}^{n}\nor{\tilde{\phi}_{n,j} - \tilde{\phi}_j}
\le \tilde{K}\sum_{j=n+1}^{\infty}\Vert \tilde{\phi}_j\Vert,\qquad
n\in\N.
\label{eq:Baxter608}
\end{equation}
\end{corollary}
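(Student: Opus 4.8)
The plan is to reduce the statement to the already-established forward Baxter inequality, Theorem \ref{thm:Baxter02}, by passing to the time-reversed process $\{\tilde{X}_k\}$. Two facts recorded earlier in the paper make this work. First, in the discussion following \eqref{eq:hsharp726} it is observed that the time-reversed process $\{\tilde{X}_k\}$ of a $q$-variate FARIMA process with (F) is again a $q$-variate FARIMA process satisfying (F), with the same fractional differencing order $d$ and with $\tilde{g}$ playing the role of $g$; in particular $\{\tilde{X}_k\}$ falls under the hypotheses of Theorem \ref{thm:Baxter02} whenever $0<d<1/2$. Second, by the remark after \eqref{eq:ipc574}, the backward infinite predictor coefficients $\tilde{\phi}_k$ of $\{X_k\}$ are precisely the forward infinite predictor coefficients of $\{\tilde{X}_k\}$.

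The first thing I would do is record the analogue of this identification for the \emph{finite} predictor coefficients. Using $\tilde{X}_k=X_{-k}$ together with the identity $M_{[-n,-1]}^{\tilde{X}}=M_{[1,n]}^{X}$ and stationarity (shifting the prediction of $X_0$ based on the block $[1,n]$ downward by $n+1$), one reads off directly from \eqref{eq:backward-phi123} that the backward finite predictor coefficients $\tilde{\phi}_{n,j}$ of $\{X_k\}$ coincide with the forward finite predictor coefficients $\phi_{n,j}^{\tilde{X}}$ of $\{\tilde{X}_k\}$. This is the multivariate counterpart of the standard univariate fact and is purely a matter of index bookkeeping; it is already used implicitly in the proof of Theorem \ref{thm:fpc548}, where the backward formula is deduced from the forward one by time reversal.

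With these identifications in hand, I would simply apply Theorem \ref{thm:Baxter02} to the process $\{\tilde{X}_k\}$: since $\{\tilde{X}_k\}$ satisfies (F) with $d\in(0,1/2)$, there is a constant $\tilde{K}>0$ with $\sum_{j=1}^{n}\nor{\phi_{n,j}^{\tilde{X}}-\phi_{j}^{\tilde{X}}}\le\tilde{K}\sum_{j=n+1}^{\infty}\nor{\phi_{j}^{\tilde{X}}}$ for all $n\in\N$. Substituting $\phi_{n,j}^{\tilde{X}}=\tilde{\phi}_{n,j}$ and $\phi_{j}^{\tilde{X}}=\tilde{\phi}_{j}$ yields exactly \eqref{eq:Baxter608}. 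There is no substantive obstacle here, since every ingredient is either quoted verbatim from earlier in the paper or is a routine translation under time reversal; the only step requiring a moment's care is the sign and index matching in identifying the backward finite predictor coefficients of $\{X_k\}$ with the forward ones of $\{\tilde{X}_k\}$.
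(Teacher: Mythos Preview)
Your proposal is correct and follows exactly the paper's own approach: the corollary is obtained by applying Theorem \ref{thm:Baxter02} to the time-reversed process $\{\tilde{X}_k\}$, using that $\{\tilde{X}_k\}$ again satisfies (F) with the same $d$ and that the backward (finite and infinite) predictor coefficients of $\{X_k\}$ are the forward ones of $\{\tilde{X}_k\}$. You have simply spelled out the index identifications more explicitly than the paper does.
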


\

\

\noindent {\bf Acknowledgements}\quad

\

\noindent We would like to thank the anonymous referees for their helpful comments. 
M.\ Pourahmadi was supported by the NFS grant DMS-1309586.

\


\end{document}